\newtheorem{theorem}{Theorem}[section]
\newtheorem{prop}[theorem]{Proposition}
\newtheorem{lemma}[theorem]{Lemma}
\newtheorem{cor}[theorem]{Corollary}
\newtheorem{defn}[theorem]{Definition}
\newtheorem{rmk}[theorem]{Remark}
\newtheorem{qn}[theorem]{Question}
\newcommand{\ra}{\rangle}
\newcommand{\natls}{{\mathbb N}}
\newcommand{\e}{{\epsilon}}
\newcommand\CC{{\mathcal C}}
\newcommand\FF{{\mathcal F}}
\newcommand\GG{{\mathcal G}}
\newcommand\HH{{\mathcal H}}
\newcommand\LL{{\mathcal L}}
\newcommand\MM{{\mathcal M}}
\newcommand\PP{{\mathcal P}}
\newcommand\RR{{\mathcal R}}
\newcommand\TT{{\mathcal T}}
\newcommand{\CG }{{\rm Con} ^{\omega } (G, d)}
\newcommand\PMF{{\PP\kern-2pt\MM\FF}}
\newcommand\PML{{\PP\kern-2pt\MM\LL}}
\newcommand{\fsubd}{\mathrel{{\scriptstyle\searrow}\kern-1ex^d\kern0.5ex}}
\newcommand{\bsubd}{\mathrel{{\scriptstyle\swarrow}\kern-1.6ex^d\kern0.8ex}}
\newcommand{\fsubeq}{\mathrel{\raise-.7ex\hbox{$\overset{\searrow}{=}$}}}
\newcommand{\bsubeq}{\mathrel{\raise-.7ex\hbox{$\overset{\swarrow}{=}$}}}
\newcommand{\tsh}[1]{\left\{\kern-.9ex\left\{#1\right\}\kern-.9ex\right\}}
\begin{document}

\title{Controlled   Floyd Separation and Non Relatively Hyperbolic Groups}

\author{Shubhabrata Das}
\address{School of Mathematical Sciences, Department of Mathematics, RKM Vivekananda University, P.O. 
Belur Math, Howrah 711202, India}
\email{shubhabrata.gt@gmail.com}
\author{Mahan Mj}
\address{School of Mathematical Sciences, Department of Mathematics, RKM Vivekananda University, P.O. 
Belur Math, Howrah 711202, India}
\email{mahan.mj@gmail.com, mahan@rkmvu.ac.in}

\thanks{
SD is partly supported by a CSIR Junior Research Fellowship. MM is supported in part by a JC Bose fellowship.
This paper is part
of SD's PhD thesis  written under the supervision of MM. } 
\date{\today}

\subjclass[2000]{20F32}

\keywords{Floyd boundary, Floyd separation, relative hyperbolicity, asymptotic cone, small cancellation}

\begin{abstract}
We introduce the notion of  controlled Floyd separation between geodesic rays  starting at the identity in a finitely generated group  $G$.
 Two such geodesic rays are said to be Floyd separated with respect to quasigeodesics if the (Floyd) length of $c-$quasigeodesics (for fixed but arbitrary $c$)
joining points on the   geodesic rays is asymptotically bounded away from zero. This is always satisfied by Morse geodesics.
The main purpose of this paper is to furnish an example of a finitely generated group $G$ such that \\
1) all  finitely presented subgroups of $G$ are hyperbolic, \\
2) $G$ has an uncountable family of geodesic rays that are Floyd separated with respect to 
quasigeodesics,\\
3) $G$
 is not hyperbolic relative to any collection of proper subgroups.\\
4) $G$ is a direct limit of hyperbolic CAT(0) cubulated groups.\\
5) $G$ has trivial Floyd boundary in the usual sense.

On the way towards constructing $G$, we construct a malnormal infinitely generated (and hence non-quasiconvex) subgroup of a free group,
giving negative evidence towards a question of Swarup and Gitik.
\end{abstract}

\maketitle

\tableofcontents

\section{Introduction}\label{intro}  In this paper, we introduce the notion of  controlled Floyd separation between geodesic rays 
starting at the identity
 of a finitely generated group $G$.
Here, the term {\it `control'} refers  in general to a choice of a $G-$invariant subcollection of 
paths in a Cayley graph $\Gamma$ of $G$ with respect to which the Floyd distance is computed
(See Definition \ref{cdef}).
We shall primarily be interested in subcollections given by $(\kappa, \kappa)-$quasigeodesics in $\Gamma$. 
The main purpose of this paper is to furnish an example of a finitely generated group $G$ which satisfies the following properties:
\begin{enumerate}
\item[(A)] There are uncountably many geodesic rays starting  at the identity in $G$, which are  Floyd separated with
respect to $(\kappa, \kappa)-$quasigeodesics for any $\kappa > 1$. See Theorem \ref{nontrivial}. These rays actually lie in a subgroup that
is quasiconvex in a strong sense: all geodesics in the subgroup are `uniformly Morse' (Corollary \ref{morse}).
\item[(B)] $G$ is not hyperbolic relative to any collection of proper subgroups.  See Theorem \ref{nrhthm}.
\end{enumerate}
This provides a construction that may be used as negative evidence towards a question of Olshanskii, Osin and Sapir \cite[Problem 7.11]{oos}.

The group $G$ we construct is the double of a free group along an infinitely generated malnormal subgroup $K$:  $G=F\ast_K F$. Here
$K = \cup_n K_n$ is an ascending union of malnormal quasiconvex subgroups $K_n$ of $F$. The group $G$ has a number of other features:

\begin{enumerate}
\item $G$ is a graded small cancellation (and hence lacunary hyperbolic) group in the sense of Olshanskii, Osin and Sapir \cite{oos}. See Theorem \ref{g-gsc}.
\item $G$ is a direct limit of cubulated hyperbolic groups $G_n$: each $G_n$ is hyperbolic and admits a geometric action on
a non-positively curved square complex (in fact a $\mathcal{VH}$ complex in the sense of Wise \cite{wise-cbms}). See Proposition \ref{cycgeodprop}.
\item Every finitely presented subgroup of $G$ is hyperbolic (Proposition \ref{subgp}). In particular, $G$ does not contain any Baumslag-Solitar group
$BS(m,n)$.
\item Each $K_n$ is quasiconvex in a strong sense (`uniformly Morse') in $G$ (Corollary \ref{morse}).
\item The limit set of $K (= \cup_n K_n)$ in $F$ is all of the Gromov boundary of $F$ (Proposition \ref{full}).
\item In spite of the  features (A) and (B) above, $G$ has trivial Floyd boundary in the usual sense.  See Theorem \ref{floydtrivial}.
\end{enumerate}

The group $G$ and its features above illustrate that the source of triviality of the Floyd boundary of a group $G$ is quite subtle
and known sources (e.g. existence of $\mathbb{Z} \oplus \mathbb{Z} \subset G$, wideness of $G$, etc.) are inadequate to detect this.

On the way towards constructing $G$, we construct (Proposition \ref{malnprop}) a malnormal infinitely generated (and hence non-quasiconvex) subgroup of a free group,
giving negative evidence towards a question of Swarup and Gitik \cite[Question 1.8]{bestvinahp}.

A range of tools is used in proving the properties of $G$:
\begin{enumerate}
\item The theory of relative hyperbolicity and relative quasiconvexity \cite{farb-relhyp, bowditch-relhyp, hruska-agt}.
\item Graphical small cancellation theory  \cite{wise-por, wise-qpor}.
\item Graded small cancellation theory \cite{oos, olshanski-book}.
\item Tree graded spaces \cite{ds}.
\item Asymptotic cones and lacunary hyperbolic groups \cite{oos, gromov-ai}.
\end{enumerate}

\subsection{History of the Problem} The notion of a Floyd function and Floyd boundary were introduced by Floyd in \cite{floyd}
and generalized by Gromov in \cite{gromov-ai} under the name of conformal boundary of a group. Gromov \cite[p. 264]{gromov-ai} and
Karlsson \cite{karlsson-free} showed that
the action of a group on its Floyd boundary is a convergence action (in fact a geometric  convergence action in the sense of Gerasimov) provided the boundary is
non-trivial, i.e. if it contains more than two (and hence infinitely many) points. In \cite[Proposition 4.28]{oos}, Olshanskii, Osin and Sapir showed that 
if $G$ is a finitely generated group whose Floyd boundary consists of at least
2 points, then all asymptotic cones of $G$ have cut points. These two facts may be regarded as evidence  towards a positive answer to the following question:

\begin{qn}  \cite[Problem 7.11]{oos} (see also \cite[p.18]{act}.) Suppose that a finitely generated group G has a non–trivial Floyd boundary.
Is $G$ hyperbolic relative to a collection of proper subgroups?
\label{mainqn} \end{qn}

  Theorem \ref{nontrivial} shows, in particular, that the following weaker question has a negative answer (see Section \ref{cfs} for precise definitions).

\begin{qn}  Suppose that a finitely generated group G has a pair of geodesic rays which are  Floyd-separated with respect to quasigeodesics.
Is $G$ hyperbolic relative to a collection of proper subgroups?
\label{mainqn2} \end{qn}

A very general and detailed treatment of relative hyperbolicity and its characterization in topological and dynamic terms has been carried out recently 
by Gerasimov and Potyagailo in a sequence of papers
\cite{gerasimov-expans-gafa,ger-floydgafa,ger-pot-qc, ger-pot-jems}. The notion of a controlled Floyd separation is related to, but different from the notion of
a Karlsson function introduced in \cite{ger-pot-qc}.

\subsection{Notation}\label{notn}

\begin{enumerate}
\item $H$ will be a subgroup of the free group $F_2$ and $h, k$ will denote elements (typically of $F_2$).
\item $(h,k)_1$ will denote the Gromov inner product of $h, k$ with respect to $1$.
 \item $|h| = d(h,1)$
\item $(h,H)_1 = {\rm max}_{k \in H} (h,k)_1$
\item $Rose (F_2)$ will denote the rose on two petals labeled by the generators  of $F_2$.
\item $K_n$ will be a free subgroup of $F_2$ of rank $n$. 
$Rose (K_n)$ will denote the rose on $n$ petals labeled by the generators  of $K_n$.
\item If $H$ is a subgroup of a (relatively) hyperbolic group $G$, the limit set of $H$ is denoted $\Lambda_H$. 
\item $T(n)$ denotes the tower function of height $n$ with base 2, i.e. $T(n) = 2^{2^{2^{2 \cdots}}}$, where the power is taken
$ n$ times.
\item $U(n)(a,b)$ is the word (element) in the free group $F_2$ on generators $a, b$
given by  $$U(n)(a,b)=[a^{(T(n)+1)}b^{(T(n)+1)}][a^{(T(n)+2)}b^{(T(n)+2)}]\cdots[a^{T(n+1)}b^{T(n+1)}].$$
\end{enumerate}

\section{Preliminaries}\label{prelim}

We shall be using two equivalent definitions regarding quasigeodesics in this paper, which we elucidate here at the outset.

\begin{defn} Let $(X,d)$ be a geodesic metric space. \\
1) A path $\sigma : I \rightarrow X$ is a $(K, \epsilon)-$quasigeodesic with $K\geq 1, \epsilon \geq 0$ if 
for all $t_1, t_2 \in I$, 
$$\frac{1}{K} |t_1-t_2| - \epsilon \leq  d(\sigma (t_1), \sigma (t_2)) \leq K |t_1-t_2| + \epsilon.$$\\
2) Alternately \cite{oos}[Section 2.4], $\sigma : I \rightarrow X$ is a $(\lambda, \epsilon)-$quasigeodesic with $\lambda\in (0, 1], \epsilon \geq 0$ if 
for all $t_1, t_2 \in I$,  
$$d(\sigma (t_1), \sigma (t_2)) \geq \lambda l(\sigma([t_1, t_2]) -\epsilon,$$
where $l(\sigma([t_1, t_2]) $ denotes the length of the subpath  from $\sigma (t_1)$ to $ \sigma (t_2)$. 
\end{defn}

We use the first definition when we want $K \geq 1$ and the second one when we want $\lambda\in (0, 1]$. The second one is used primarily in  Section \ref{gsc}.

\subsection{Relative Hyperbolicity} \label{rh} We quickly recall the notion of relative hyperbolicity introduced by Gromov \cite{gromov-hypgps}.
An equivalent notion was introduced by Farb \cite{farb-relhyp} and the equivalence of the two proven by Bowditch
\cite{bowditch-relhyp}. Equivalent definitions and closely related work
can also be found in the work of a number of authors including \cite{bowditch-relhyp, osin-mams,
dahmani-th, groves-manning, hruska-agt}.

Let $(X,d)$ be a path metric space. A collection of closed
 subsets $\HH = \{ H_\alpha\}$ of $X$ will be said to be {\bf uniformly
 separated} if there exists $\epsilon > 0$ such that
$d(H_1, H_2) \geq \epsilon$ for all distinct $H_1, H_2 \in \HH$.

 \begin{defn}  \cite{gromov-hypgps} 
For any geodesic metric space
$(H,d)$, the {\em hyperbolic cone} 
$H^h$ is the metric space
$H\times [0,\infty) = H^h$ equipped with the
path metric $d_h$ obtained as follows: \\
1) $d_{h,t}((x,t),(y,t)) = 2^{-t}d_H(x,y)$, where $d_{h,t}$ is the induced path
metric on $H\times \{t\}$.  Paths joining
$(x,t),(y,t)$ and lying on  $H\times \{t\}$
are called {\em horizontal paths}. \\
2) $d_h((x,t),(x,s))=\vert t-s \vert$ for all $x\in H$ and for all $t,s\in [0,\infty)$, and the corresponding paths are called
{\em vertical paths}. \\
3)  for all $x,y \in H^h$,  $d_h(x,y)$ is the path metric induced by the collection of horizontal and vertical paths. \\
\end{defn}

A similar construction of a `combinatorial' horoball was carried out by Groves-Manning in \cite{groves-manning}.

\begin{defn}
Let $\Gamma$ be the Cayley graph of a group $G$ and let $\HH_0$ be a finite collection of  subgroups of $G$.
let $\HH$ be the collection of all left cosets of elements of $\HH_0$.
$G$ is said to be  hyperbolic relative to $\HH$ in the sense of Gromov, if $\GG (\Gamma, \HH)$,  
obtained by attaching the hyperbolic cones
$ aH^h$ to $aH \in \HH$  by identifying $(z,0)$ with $z$
for all $aH\in \HH$ and $z \in aH$,
 is a complete hyperbolic metric space. The collection $\{ aH^h : H \in \HH \}$ is denoted
as ${\HH}^h$. The induced path metric is denoted as $d_h$.

The boundary $\partial \GG (\Gamma, \HH)$ is called the {\bf Bowditch boundary}.
\end{defn}

The following Theorem
of Bowditch \cite{bowditch-relhyp} shall be useful (see also \cite{mahan-relrig}).

\begin{theorem}  \cite{bowditch-relhyp} Let $G$ be a hyperbolic group without torsion. Then $G$ is hyperbolic relative to $\PP$ if and only if 
\begin{enumerate}
\item Each $P_i$ is quasiconvex in $G$.
\item for all $P_i, P_j \in \PP$ and $g \in G$,
$gP_ig^{-1} \cap P_j = \{1\}$ unless $i=j$ and $g \in P_i$. 
\end{enumerate}

In particular if $\PP$ has exactly one subgroup $P_1$, and $G$ is hyperbolic relative to $P_1$, the latter is malnormal quasiconvex in $G$.
\label{malnrh} \end{theorem}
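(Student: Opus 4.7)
My plan is to treat the two directions separately. For the direction that if $G$ is hyperbolic relative to $\PP$ then each $P_i$ is quasiconvex and $\PP$ is malnormal, I would first argue malnormality, which holds in any relatively hyperbolic group: the intersection $gP_ig^{-1} \cap P_j$ stabilizes (at least) two distinct points of the Bowditch boundary $\partial\GG(\Gamma,\HH)$ unless $i=j$ and $g\in P_i$, so by standard convergence group theory it must be finite, hence trivial since $G$ is torsion-free. For quasiconvexity, I would exploit that $G$ is hyperbolic in its own right. Given $g_1,g_2\in P_i$, the $d_h$-geodesic $\gamma$ from $g_1$ to $g_2$ in $\GG(\Gamma,\HH)$ stays a bounded $d_h$-distance from the closed horoball over $P_i$ (by hyperbolicity of $\GG(\Gamma,\HH)$ and the horoball geometry, since $g_1,g_2$ lie on its horocycle at depth $0$). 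Replacing each horoball excursion of $\gamma$ by a horizontal path at depth $0$ (which costs only a bounded multiplicative factor in length), and projecting, yields a quasigeodesic in $\Gamma$ lying in $P_i$ that tracks a $\Gamma$-geodesic between $g_1$ and $g_2$ (using hyperbolicity of $\Gamma$). This is exactly quasiconvexity of $P_i$ in $G$.

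For the converse, assuming quasiconvexity of each $P_i$ and malnormality of $\PP$, I would show that $\GG(\Gamma,\HH)$ is Gromov hyperbolic. The crucial geometric input is that quasiconvexity plus malnormality implies the "bounded coset intersection" property: there is a constant $C=C(k)$ such that $\diam(N_k(aP_i)\cap N_k(bP_j))\le C$ whenever $aP_i\neq bP_j$. Granted this, a geodesic triangle in $\GG(\Gamma,\HH)$ can be decomposed, away from its horoball excursions, into a triangle in $\Gamma$ whose sides are relative quasigeodesics; each side's excursions into a single horoball can be shuffled together (by bounded coset intersection) so that distinct sides never share a deep excursion into the same horoball. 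Inside each horoball the horoball itself is hyperbolic, so thinness is preserved there, while outside, hyperbolicity of $G$ gives thinness of the underlying $\Gamma$-triangle. Combining yields uniform $\delta$-hyperbolicity of $\GG(\Gamma,\HH)$.

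The main obstacle, as expected, is the converse direction: one must carefully control the transition between $d$-geodesics in $\Gamma$ and $d_h$-geodesics in $\GG(\Gamma,\HH)$, ruling out the possibility that two sides of a triangle dive deeply into distinct but nearby horoballs over cosets $aP_i, bP_j$. This is precisely where malnormality is used in an essential way (through the bounded coset intersection property), distinguishing this setting from the case of a general quasiconvex subgroup, which need not give rise to a relatively hyperbolic structure. Once this bookkeeping is in place the argument reduces to standard thin-triangle manipulations in the hyperbolic group $G$ and in the hyperbolic cones attached along the cosets.
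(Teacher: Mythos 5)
The paper does not prove this statement at all: it is imported verbatim as a citation to Bowditch's paper on relative hyperbolicity, so there is no internal argument to compare yours against. Judged on its own, your outline follows the standard route to this characterization and is essentially sound as a roadmap: the forward direction via the dynamics of the action on the Bowditch boundary plus geodesic tracking through horoballs, and the converse via the bounded coarse intersection of distinct peripheral cosets followed by a thin-triangle analysis in $\GG(\Gamma,\HH)$. Two points deserve care. First, in the malnormality step, "stabilizes two distinct boundary points, hence finite" is not a valid implication in a general convergence group (a loxodromic cyclic subgroup stabilizes its two endpoints); what you actually need is that the two points are the \emph{parabolic} fixed points of the distinct maximal parabolics $gP_ig^{-1}$ and $P_j$, that distinct peripheral cosets determine distinct parabolic points, and that an infinite subgroup of a convergence group has a nonempty limit set contained in the limit sets of any overgroups --- forcing the intersection to have empty limit set, hence to be finite, hence trivial by torsion-freeness. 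Second, the converse as written is a plan rather than a proof: the lemma that quasiconvexity plus the almost-malnormality condition yields $\diam(N_k(aP_i)\cap N_k(bP_j))\le C(k)$ for distinct cosets, and the subsequent bookkeeping showing that distinct sides of a $d_h$-geodesic triangle cannot make deep excursions into nearby but distinct horoballs, are precisely where all the work lies, and you only name them. Since the paper itself delegates the entire theorem to Bowditch, this level of detail is defensible, but be aware that what you have is a correct skeleton of the known proof, not a self-contained argument.
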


\subsubsection{Relative Quasiconvexity}
In \cite{hruska-agt} Hruska gives a number of equivalent criteria for relative quasiconvexity for subgroups of relatively hyperbolic groups.
We give one of these below to fix notions.

\begin{defn}\cite{hruska-agt} Let $G$ be  hyperbolic relative to a collection $\PP$ of parabolic subgroups. 
A subgroup $H \subset G$ is \emph{relatively quasiconvex} if the following holds.
Let $\mathcal{S}$ be some (any) finite relative generating set for $(G,\PP)$,
and let $\mathcal{E}$ be the set containing all the elements of  $P_i \in \PP$ (for all $i$).
Let $\Gamma$ be the Cayley graph of $G$ with respect to the generating set  $\mathcal{S} \cup \mathcal{E}$
with all edges of length one.
Let $d$ be some (any) proper, left invariant metric on~$G$.
Then there is a constant $D_0=D_0(\mathcal{S},d)$ such that
for each geodesic $\bar{c}$ in $\Gamma$
connecting two points of $H$,
every vertex of $\bar{c}$ lies within a $d$--distance $D_0$ of $H$.
\end{defn}

It is shown in \cite{hruska-agt} that the above definition is independent of the choice of
finite relative generating set $\mathcal{S}$ and the choice of proper metric $d$.


\begin{theorem}\cite{hruska-agt}[Theorem 1.2]
\label{hruska-intn}
Let $G$ be a countable group that is relatively hyperbolic with respect
to a finite family of subgroups $\PP=\{P_1,\dots,P_n\}$.
\begin{enumerate}
\item If $H\subset G$ is relatively quasiconvex, then $H$ is relatively
hyperbolic with respect to a natural induced collection of subgroups.
\item If $H_1,H_2 \subset G$ are relatively quasiconvex, then
$H_1 \cap H_2$ is also relatively quasiconvex.
\end{enumerate}
\end{theorem}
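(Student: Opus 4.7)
For part (1), the plan is to construct the natural induced peripheral structure $\OO$ on $H$ and then verify relative hyperbolicity via the cusped-space model of Gromov (or equivalently the combinatorial horoball construction of Groves--Manning). I would let $\OO$ consist of one $H$-conjugacy representative of each infinite subgroup of the form $H \cap gP_ig^{-1}$ with $P_i \in \PP$ and $g \in G$. A preliminary step is to show that $\OO$ is finite up to $H$-conjugacy; this is a consequence of relative quasiconvexity, which forces only finitely many $H$-orbits of peripheral cosets to meet $H$ in an unbounded way. Building the cusped spaces $\GG(H,\OO)$ and $\GG(\Gamma,\HH)$ by attaching hyperbolic cones along these collections, I would then construct an $H$-equivariant map $\Phi: \GG(H,\OO) \to \GG(\Gamma,\HH)$ that sends each horoball above $H \cap gP_ig^{-1}$ into the horoball above $gP_i$. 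Using the coned-off Cayley graph definition of relative quasiconvexity to control how $\Phi$ behaves on the underlying Cayley graph, and the fact that the horoball construction commutes with inclusion of peripheral subgroups, one shows $\Phi$ is a quasi-isometric embedding whose image is quasiconvex in the hyperbolic space $\GG(\Gamma,\HH)$. Hyperbolicity of $\GG(H,\OO)$, and hence relative hyperbolicity of $(H,\OO)$, is then immediate.

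For part (2), given relative quasiconvexity of $H_1,H_2$ with constants $D_1,D_2$ in the proper metric $d$, pick any geodesic $\bar c$ in the coned-off Cayley graph $\Gamma$ joining two elements of $H_1\cap H_2$. Each vertex of $\bar c$ is within $d$-distance $\max(D_1,D_2)$ of both $H_1$ and $H_2$. The naive hope that such a vertex must be close to $H_1\cap H_2$ in $d$ fails in general, because $H_1$ and $H_2$ can each meet a peripheral coset $gP_i$ in a large translate while $H_1\cap H_2 \cap gP_ig^{-1}$ is small or trivial. The remedy I would pursue is to recast the problem inside the cusped space: relative quasiconvexity of $H_i$ is equivalent to metric quasiconvexity of the cusped space $\GG(H_i,\OO_i)$ sitting inside $\GG(\Gamma,\HH)$, with $\OO_i$ the induced peripheral structure from part (1). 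The intersection of these two quasiconvex subspaces is, up to bounded Hausdorff distance, a cusped space for $H_1\cap H_2$ with the induced peripheral structure $\OO_1\cap\OO_2$; hyperbolic thin-triangles, bounded coset penetration, and the finite-intersection behaviour of distinct peripheral cosets combine to give a uniform quasiconvexity constant. Reverting from the cusped space back to $\Gamma$ via retraction onto the Cayley graph then produces the desired bound $D_0$ for $H_1\cap H_2$.

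The main obstacle is the intersection step in (2): in a Gromov hyperbolic space the intersection of two quasiconvex subsets is not quasiconvex in general, so a blunt geometric appeal to $\GG(\Gamma,\HH)$ cannot work. The essential input that makes the argument go through is the \emph{almost malnormality} of the peripheral family $\HH$: if $gP_ig^{-1} \cap g'P_j(g')^{-1}$ is infinite, then $i=j$ and $g^{-1}g' \in P_i$. This forces the horoballs touching both $\GG(H_1,\OO_1)$ and $\GG(H_2,\OO_2)$ to essentially coincide, so that the induced peripheral structure of $H_1\cap H_2$ is well-behaved and the common part of the two cusped subspaces really is captured by $H_1\cap H_2$ in bounded Hausdorff distance. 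Once this alignment of horoballs is established, part (2) in fact upgrades to give relative hyperbolicity of $H_1\cap H_2$ with the induced peripheral structure $\OO_1\cap \OO_2$ as a bonus.
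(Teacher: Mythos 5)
First, a point of reference: the paper does not prove this statement at all --- it is quoted verbatim from Hruska \cite{hruska-agt} and used as a black box --- so there is no in-paper argument to compare against, and I am judging your sketch against the standard proofs in the literature. Your outline of part (1) is essentially the correct and standard route: form the induced peripheral collection from the infinite intersections $H\cap gP_ig^{-1}$, prove finiteness up to $H$-conjugacy (this is itself a nontrivial consequence of relative quasiconvexity, but you flag it), and show the cusped space of $(H,\OO)$ maps quasi-isometrically onto a quasiconvex subset of the cusped space of $(G,\PP)$. As an outline this is fine.

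Part (2), however, has a genuine gap at its central step. The assertion that ``the intersection of these two quasiconvex subspaces is, up to bounded Hausdorff distance, a cusped space for $H_1\cap H_2$'' is precisely the hard content of the theorem, and neither thin triangles nor almost malnormality of $\HH$ delivers it. What those ingredients give you is that the \emph{coarse} intersection $N_r(\GG(H_1,\OO_1))\cap N_r(\GG(H_2,\OO_2))$ is quasiconvex and well defined up to bounded Hausdorff distance for large $r$; they do not show that this coarse intersection is coarsely equal to the actual subgroup $H_1\cap H_2$. A vertex can be $d$-close to $H_1$ and $d$-close to $H_2$ via two unrelated group elements while being far from every element of $H_1\cap H_2$, and ruling this out requires the group structure together with the \emph{properness} of the left-invariant metric $d$ --- which your argument for (2) never invokes. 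The missing ingredient is Short's pigeonhole argument adapted to the relative setting: for each vertex $v$ of a geodesic joining two points of $H_1\cap H_2$, record the pair of offsets $(h_1^{-1}v,\,h_2^{-1}v)$ with $h_i\in H_i$ and $d(h_i,v)\le D_i$; properness of $d$ guarantees only finitely many such pairs occur, and if two vertices $v,v'$ share a pair then $h_1(h_1')^{-1}=h_2(h_2')^{-1}$ is an element of $H_1\cap H_2$ carrying $v'$ to $v$, which yields the uniform bound on $d(v,H_1\cap H_2)$. Almost malnormality of $\HH$ is only needed to control the portions of the geodesic travelling through peripheral cosets; it cannot substitute for this counting argument on the thick part. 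Without that step your proof of (2) does not close.
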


\begin{theorem}\cite{hruska-agt}[Theorem 1.5]
\label{hruska-qi}
Let $G$ be a finitely generated relatively hyperbolic group
and let $H$ be a finitely generated subgroup.
If $H$ is undistorted in $G$, then $H$ is relatively quasiconvex.
\end{theorem}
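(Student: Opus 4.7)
The plan is to work in the Groves--Manning cusped space $X = \GG(\Gamma, \HH)$, which is $\delta$--hyperbolic and proper, and to verify one of the equivalent characterizations of relative quasiconvexity: namely, that the orbit $H \cdot 1$ is a quasiconvex subset of $X$ in the usual hyperbolic sense. Given this, standard arguments translate $X$--quasiconvexity back to the geodesic condition in the relative Cayley graph $\Gamma$ stated in Hruska's definition.

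First, fix a finite generating set $S_H$ of $H$. Given $h_1, h_2 \in H$, let $\sigma$ be an $S_H$--word geodesic in $H$ between them. The undistortion hypothesis gives that $\sigma$, viewed as a path in the Cayley graph of $G$, is a $(K, K)$--quasigeodesic for $d_G$, where $K$ depends only on $H$ and $G$. The critical step is to upgrade this to: $\sigma$ is a $(K', K')$--quasigeodesic in the cusped space $X$. The inequality $d_X \le d_G$ is immediate, so the content is in the other direction. The only way $d_X$ can be much smaller than $d_G$ along a subpath of $\sigma$ is through logarithmic shortcuts via the horoballs attached to peripheral cosets that $\sigma$ traverses. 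One shows that no subpath of $\sigma$ can stay inside a single peripheral coset $g P_i$ for $d_H$--length very large compared to its $d_G$--length: combining undistortion of $H$ with finite generation, each intersection $H \cap g P_i g^{-1}$ is finitely generated (a standard fact in the relatively hyperbolic setting) and undistorted in $H$, which bounds the rate at which $\sigma$ can dive into peripheral cosets and yields uniform quasigeodesic constants for $\sigma$ in $X$.

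Once $\sigma$ is a uniform quasigeodesic in the hyperbolic space $X$, the Morse stability lemma shows that any $X$--geodesic between $h_1$ and $h_2$ lies in a uniform Hausdorff neighborhood of $\sigma \subset H \cdot 1$, so $H \cdot 1$ is quasiconvex in $X$. To conclude, take a relative $\Gamma$--geodesic $\bar c$ joining $h_1, h_2 \in H$ and lift each $\mathcal{E}$--edge of $\bar c$ to an $X$--geodesic through the corresponding horoball; the result is an $X$--quasigeodesic with the same endpoints. Its vertices lie at the $t = 0$ level of $X$; by quasiconvexity they are uniformly $X$--close to $H$, and at $t = 0$ uniform $X$--closeness to $H$ translates to uniform $d_G$--closeness (up to an additive constant determined by $\delta$ and the horoball geometry). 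This gives the constant $D_0$ in Hruska's definition.

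The main obstacle is the middle step, showing that the $H$--word geodesic $\sigma$ remains a quasigeodesic in $X$. The whole subtlety of the theorem is encoded there: without undistortion, $H$ could penetrate deeply into peripheral cosets in an uncontrolled way, and then the horoball shortcuts in $X$ would destroy the quasigeodesic property of $\sigma$. Undistortion precisely rules this out, via the induced peripheral structure on $H$ given by the intersections $H \cap g P_i g^{-1}$, and this is exactly the geometric mechanism that converts a $d_G$--quasi--isometric embedding into relative quasiconvexity.
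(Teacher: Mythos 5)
The paper does not prove this statement; it is quoted verbatim as Theorem 1.5 of \cite{hruska-agt}, so there is no internal proof to compare against. Judged on its own, your proposal has a genuine gap at its central step.

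The gap is the claim that relative quasiconvexity can be verified by showing that the orbit $H\cdot 1$ is quasiconvex \emph{in the usual hyperbolic sense} inside the cusped space $X$, via the intermediate assertion that an $S_H$--word geodesic $\sigma$ is a uniform quasigeodesic of $X$. Both assertions fail for subgroups that the theorem must cover. Take $H=P_1$ a finitely generated peripheral subgroup: it is undistorted in $G$ and relatively quasiconvex, yet $H\cdot 1$ is the $t=0$ horosphere of a single combinatorial horoball, and the $X$--geodesic between $h_1,h_2\in H$ with $d_G(h_1,h_2)=n$ descends to depth about $\log_2 n$, so it leaves every bounded neighborhood of $H\cdot 1$; correspondingly $\sigma$ has $X$--length about $n$ while its endpoints are at $X$--distance about $\log_2 n$, so $\sigma$ is not an $X$--quasigeodesic. (The degenerate case $H=G$ fails the same way.) Your proposed repair --- that undistortion bounds how long $\sigma$ can stay in a peripheral coset --- does not hold: undistortion of $H$ in $G$ is perfectly compatible with $H\cap gP_ig^{-1}$ being infinite, indeed with $H$ being entirely parabolic. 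What undistortion rules out is distortion measured against $d_G$, whereas the horoball shortcuts compress $d_G$ itself; these are different phenomena.

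The correct cusped-space characterization (Hruska's (QC-3)/(QC-4), or Manning--Mart\'inez-Pedroza) requires $X$--geodesics between points of $H$ to stay close not to $H\cdot 1$ but to the image of the cusped space of $H$ built from its \emph{induced} peripheral structure $\{H\cap gP_ig^{-1}\ \text{infinite}\}$ --- i.e.\ one must allow the geodesic to descend into the horoballs over $H$'s own parabolic intersections. Any proof along your lines must first produce that induced structure and prove it is well behaved (finitely many $H$--conjugacy classes, finite generation of the intersections), which in Hruska's development is a \emph{consequence} of relative quasiconvexity rather than an a priori input; you invoke it as "a standard fact" in the middle of establishing relative quasiconvexity, which is circular as written. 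A workable route instead pushes the undistortion hypothesis through the relative Cayley graph $\hat\Gamma=\mathrm{Cay}(G,S\cup\mathcal{E})$: the $G$--quasigeodesic $\sigma$ satisfies the BCP-type tracking properties of Osin/Dru\c{t}u--Sapir, and one compares it there with a relative geodesic $\bar c$ to extract the constant $D_0$ of the definition quoted in the paper. Your final step (bounded $X$--distance at $t=0$ implies bounded $d_G$--distance, with an exponential loss) is fine; it is the quasiconvexity statement feeding into it that is false.
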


Combining the equivalence of characterizations in \cite{hruska-agt} with a Lemma of Yang we get

\begin{lemma}\cite{yang-relhyp}[Lemma 2.6] \label{yang}
Let $H$ be relatively quasiconvex in a relatively hyperbolic group $G$ such
that $|\Lambda(H)| \geq 2$. Then for any subgroup $H \subset J
\subset G$ satisfying $\Lambda(H)=\Lambda(J)$, we have that $H$ is
of finite index in $J$. In particular, $J$ is relatively 
quasiconvex.
\end{lemma}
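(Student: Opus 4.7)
The plan is to work inside the cusped space $\GG(\Gamma, \HH)$, where $\HH$ is the collection of all peripheral left cosets, and exploit the dynamics on its Gromov boundary (the Bowditch boundary of $(G,\PP)$). Write $\Lambda := \Lambda(H) = \Lambda(J) \subset \boundary \GG(\Gamma, \HH)$; since $|\Lambda| \geq 2$, the weak convex hull $Y$ of $\Lambda$ — the union of bi-infinite geodesics in $\GG(\Gamma, \HH)$ with both endpoints in $\Lambda$ — is a nonempty, quasi-convex, $H$- and $J$-invariant subset.

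First I would unpack the geometric content of relative quasiconvexity of $H$ via Hruska's formulation from Section \ref{rh}: together with the hypothesis $\Lambda(H) = \Lambda$, this is equivalent to the statement that $Y$ lies in a uniformly bounded cusped-neighborhood of the orbit $H \cdot 1$ and that $H$ acts geometrically finitely on $Y$ — cocompactly on the complement of peripheral horoballs, with each parabolic point of $\Lambda$ a bounded parabolic point for $H$.

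Next, since $\Lambda(J) = \Lambda$, the hull $Y$ is also $J$-invariant, so the orbit $J \cdot 1$ lies in a bounded cusped-neighborhood of $H \cdot 1$. Pick coset representatives $\{j_\alpha\}$ for $J/H$; for each $\alpha$ there is an $h_\alpha \in H$ such that $h_\alpha^{-1} j_\alpha \cdot 1$ sits in a fixed compact region of $\GG(\Gamma, \HH)$ in the thick part. Properness of the $G$-action then allows only finitely many such elements, giving $[J:H] < \infty$. The ``in particular'' clause follows at once: any geodesic in the cusped metric between two points of $J$ differs by bounded Hausdorff distance from one between two points of $H$, so the quasiconvexity estimate for $H$ transfers to $J$.

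The main obstacle is at the parabolic points of $\Lambda$: relative quasiconvexity only gives $H$-cocompactness on the thick part of $Y$, so a priori the $J$-orbit might accumulate inside horoballs, enlarging the peripheral stabilizer at some cusp point $\xi \in \Lambda$ to an infinite-index extension of $\mathrm{Stab}_H(\xi)$. To exclude this, one uses that $\mathrm{Stab}_G(\xi)$ is conjugate into a unique maximal parabolic subgroup (a malnormality statement in the spirit of Theorem \ref{malnrh}), and that inside this peripheral the subgroups $\mathrm{Stab}_H(\xi) \subset \mathrm{Stab}_J(\xi)$ share the single-point limit set $\{\xi\}$. Applying the induced relatively hyperbolic structures on $H$ and $J$ furnished by Theorem \ref{hruska-intn}, together with the bounded-parabolic condition at $\xi$, one forces $[\mathrm{Stab}_J(\xi) : \mathrm{Stab}_H(\xi)] < \infty$; combined with cocompactness on the thick part, this closes the argument.
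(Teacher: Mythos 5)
The paper does not actually prove this statement: it is imported verbatim as Lemma~2.6 of \cite{yang-relhyp}, with the preceding sentence saying only that it follows by ``combining the equivalence of characterizations in \cite{hruska-agt} with a Lemma of Yang.'' So there is no in-paper argument to compare against. What you have written is essentially the standard proof of Yang's lemma --- let $H$ and $J$ act on the weak hull $Y$ of the common limit set, use geometrical finiteness of the $H$-action to get cocompactness of $H$ on the relevant part of $Y$, and use properness of the $G$-action to bound the number of cosets --- and the skeleton is sound.

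Two spots need repair. First, the claim that $Y$ lies in a bounded cusped-neighborhood of $H\cdot 1$ is false whenever $\Lambda$ contains a parabolic point: $Y$ then contains geodesics descending arbitrarily deep into the corresponding horoballs, and a point at depth $t$ is at cusped-distance at least $t$ from the thick part, hence from $H\cdot 1$. What relative quasiconvexity together with $\Lambda(H)=\Lambda$ actually yields is that the \emph{truncated} hull $Y\smallsetminus\bigcup(\text{open horoballs})$ lies in a bounded neighborhood of $H\cdot 1$. Your second paragraph survives this correction: $J\cdot 1$ lies in the Cayley-graph part of the cusped space and within $d(1,Y)$ of the $J$-invariant set $Y$, and any point of $Y$ at bounded depth in a horoball is boundedly close to an entry point of the corresponding hull geodesic, hence to the truncated hull; the coset count then goes through as you wrote it. Second, your closing paragraph is both misdirected and circular: the orbit $J\cdot 1$ cannot ``accumulate inside horoballs'' since it lies in the thick part by construction, and you cannot apply Theorem~\ref{hruska-intn} to $J$, whose relative quasiconvexity is part of the conclusion. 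That paragraph should simply be deleted --- the bounded-parabolicity input it tries to secure is already contained in the geometrical-finiteness characterization you invoke at the start. Finally, for the ``in particular'' clause, deducing relative quasiconvexity of $J$ from $[J:H]<\infty$ is cleanest via the dynamical characterization (conical limit points and bounded parabolic points for $H$ remain such for $J$), since Hausdorff-closeness of geodesics in the coned-off graph does not by itself control the proper metric $d$ appearing in Hruska's definition.
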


\subsection{Floyd Boundary and Controlled Floyd Separation}\label{fb}

\begin{defn}
A function $f:\mathbb N\to\mathbb R$ is  a {\bf Floyd  scaling function}
if 
\begin{enumerate}
 \item $\sum_{n\geqslant0}f_n<\infty$,
\item There exists a positive $\lambda$ such that $1\geqslant f_{n+1}/f_n\geqslant\lambda$ for all $n{\in}\mathbb N$.
\end{enumerate}
\end{defn}

In this paper we shall refer to a Floyd  scaling function simply as a {\bf scaling function}.

Let $G$ be a group with symmetric generating set $S$ and $\Gamma = \Gamma (G,S)$ denote the Cayley graph of $G$ with respect to $S$
and let $d$ denote the word-metric.
 
\begin{defn}
Let $f$ be a scaling function. Given an edge $[x,y]$ of $\Gamma$, define its Floyd length to be $f( d([x,y], 1))$. 
The resulting {\bf path metric} on $\Gamma$  is called the {\bf Floyd metric } with respect to the scaling function $f$ and is denoted as $d_f$.

A geodesic in $(\Gamma, d_f)$ is called a {\bf Floyd geodesic }.

The metric completion $\overline{(\Gamma, d_f)}$ is called the {\bf Floyd completion} and 
$\partial_f\Gamma = \overline{(\Gamma, d_f)}\setminus
\Gamma$ is called the {\bf Floyd boundary}. For $f(n) = \lambda^n$ with $\lambda \in (0,1)$, $d_f$ will also be denoted as $d_\lambda$ and 
$\partial_f\Gamma$ will  also be denoted as  $\partial_\lambda\Gamma$.
\end{defn}

We give a somewhat different but equivalent description of the Floyd boundary that will be better adapted to the applications we have in mind.

\begin{lemma} \label{geodfg} Let $f$ be a (Floyd) scaling function for $\Gamma$. Let $r$ be a geodesic ray in $\Gamma$ starting at $1\in \Gamma$. 
Then $r$ is a Floyd geodesic in $(\Gamma, d_f)$. \end{lemma}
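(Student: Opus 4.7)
The plan is to show that for any two vertices $r(m), r(n)$ on $r$ with $m<n$, the subpath of $r$ between them realises the infimum of Floyd lengths among all paths in $\Gamma$ joining $r(m)$ to $r(n)$. The entire argument rests on a single elementary observation: if $e=[x,y]$ is an edge of $\Gamma$, then $|x|$ and $|y|$ differ by at most $1$, so the unordered pair $\{|x|,|y|\}$ is either $\{k,k\}$ or $\{k,k+1\}$ for some $k\geq 0$, and in the latter case $d(e,1)=k$.

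First I compute directly that the Floyd length of the subpath of $r$ from $r(m)$ to $r(n)$ equals
\[
\sum_{k=m}^{n-1} f(k),
\]
since the edge of $r$ between $r(k)$ and $r(k+1)$ has endpoints at distances $k$ and $k+1$ from $1$, hence $d([r(k),r(k+1)],1)=k$ and Floyd length $f(k)$.

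Next, given any edge-path $\gamma$ in $\Gamma$ from $r(m)$ to $r(n)$, I consider the integer-valued function $v\mapsto |v|$ along the vertex sequence of $\gamma$. It begins at $m$, ends at $n$, and changes by $0$ or $\pm 1$ across each successive edge. A discrete intermediate-value argument then shows: for every integer $k$ with $m\leq k\leq n-1$, $\gamma$ must contain at least one edge $e_k$ whose two endpoints have distances $k$ and $k+1$ from $1$. By the initial observation such an edge satisfies $d(e_k,1)=k$ and so has Floyd length $f(k)$. The edges $e_k$ sit at pairwise distinct levels, hence are pairwise distinct, and summing their Floyd lengths gives that the Floyd length of $\gamma$ is at least $\sum_{k=m}^{n-1} f(k)$, matching the subpath of $r$. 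Thus the subpath of $r$ is a shortest $d_f$-path between $r(m)$ and $r(n)$; as $m,n$ were arbitrary, $r$ is a Floyd geodesic.

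The only mild wrinkle is that $d_f$ is the path metric on the full geometric realisation of $\Gamma$, so a priori one must also rule out shortcuts that traverse some edges only partially. However, any rectifiable curve joining two vertices decomposes into an integer number of completely traversed edges together with a bounded number of fractional segments, and the crossing-edge count applies verbatim to the complete edges while fractional pieces contribute non-negatively. I do not anticipate serious obstacles; the real content of the lemma is the combinatorial IVT argument just described, and the essential asymmetry driving it is that Floyd lengths depend only on the level of an edge relative to $1$, so that paths which wander far out are automatically penalised.
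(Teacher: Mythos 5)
Your proof is correct and is essentially the paper's own argument: the paper likewise observes that any path from $r(m)$ to $r(n)$ must cross between the $k$- and $(k+1)$-spheres for each $m\leq k<n$, producing distinct edges of Floyd length $f(k)$ whose sum matches that of the subsegment of $r$. Your added care about the discrete intermediate-value step and about partially traversed edges in the geometric realisation only makes explicit what the paper leaves implicit.
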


\begin{proof} Let $p_m, p_n$ be points on $r$ such that $d(1,p_m) = m $ and $d(1,p_n) = n $.
Let $r_{mn}$ be the subsegment of $r$ joining $p_m, p_n$. Then any path $\sigma$ from $p_m$ to $p_n$ must have, for $m \leq i < n$,
 at least one edge $E_i$
joining a point $q_i$ to a point $q_{i+1}$ where $d(q_i, 1) = i$ and $d(q_{i+1}, 1) = i+1$ since any such path must travel between the $i-$ and $(i+1)-$spheres
at least once. Hence $l_f(\sigma) \geq \sum_m^{n-1} l_f(E_i) = l_f(r_{mn})$. \end{proof}.

\begin{defn} The  set of geodesic rays in $\Gamma$ starting at the identity will be called the {\bf preboundary} of $\Gamma$. \end{defn}
Since geodesic rays starting at the identity in $\Gamma$ are also Floyd geodesics, we shall denote the preboundary as $Pre(\partial)_f \Gamma$, emphasizing the scaling
function $f$.

\begin{rmk}\label{trivialrmk}
A sequence of points tending to infinity along a geodesic ray $r$ starting at the identity is necessarily Cauchy in the Floyd metric and hence $Pre(\partial)_f \Gamma$
can be identified with the ideal points of these rays. If $p$ denote such an ideal point, we shall also denote the ray from $1$ corresponding to $p$ as $[1,p)$. 
It follows from the definition of Floyd boundary that the Floyd boundary of $\Gamma$ is a singleton set if and only if for any pair $p, q \in Pre(\partial)_f \Gamma$,
and  sequences $p_n \rightarrow p$,   $q_n \rightarrow q$,  there exist paths $\sigma_n$ in $\Gamma$ joining $p_n, q_n$ such that $l_f (\sigma_n) \rightarrow 0$
as $n \rightarrow \infty$. \end{rmk}

The following Theorem will be used later.

\begin{theorem} \cite[Proposition 3.4.6]{ger-floydgafa}
 Let
  $G$ be    relatively hyperbolic with respect to a collection $\mathcal P$ of subgroups.
Then there exists $\lambda \in (0,1)$ such that
the identity map $G\to G$ extends to a continuous equivariant map $F$
from the Floyd completion $\overline{(\Gamma, d_f)}$ with respect to $ f(n) = \lambda^n$
to the Bowditch completion of $G$ with respect to $\mathcal P$.
In particular, the Floyd boundary of $G$ is non-trivial.\label{gerfloydgafa}
\end{theorem}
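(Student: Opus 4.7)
The plan is to leverage the Gromov hyperbolicity of the cusped space $\GG(\Gamma,\HH)$ (with metric $d_h$) to show that, for $\lambda \in (0,1)$ chosen close enough to $1$, the identity map $G\to G$ extends continuously from $\overline{(\Gamma,d_\lambda)}$ to the Bowditch completion of $\GG(\Gamma,\HH)$. The technical heart of the matter is a single lower bound: there should exist $\lambda \in (0,1)$ and constants $C>0$, $A\geq 0$ such that for all $x,y\in \Gamma$,
\[
d_\lambda(x,y) \;\geq\; C\,\lambda^{R+A}, \qquad R \;:=\; d_h(1,\,[x,y]_h),
\]
where $[x,y]_h$ denotes any $d_h$-geodesic joining $x$ and $y$. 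Granting this, equivariance of the resulting boundary map is automatic, since both metrics are built from $G$-invariant data relative to the basepoint $1$.

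Given the estimate, continuity of $F$ and non-triviality of $\partial_\lambda \Gamma$ both follow quickly. Suppose a $d_\lambda$-Cauchy sequence $(g_n)$ in $\Gamma$ has two subsequential limits $p\neq q$ in the Bowditch completion. Then for large $n,m$ in the respective subsequences, the $d_h$-geodesics $[g_n,g_m]_h$ fellow-travel a fixed $d_h$-geodesic $[p,q]_h$ on compact sets, so their $d_h$-distance to $1$ is bounded uniformly by some $R_0<\infty$. The estimate gives $d_\lambda(g_n,g_m)\geq C\lambda^{R_0+A}>0$, contradicting Cauchyness. The same argument applied to pairs of Cauchy sequences yields continuity of $F$ on the entire Floyd completion. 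Finally, applied to sequences converging to any two distinct points of $\partial \GG(\Gamma,\HH)$ (which exist whenever the relatively hyperbolic action is non-elementary), the estimate produces distinct Floyd-boundary limits, yielding non-triviality.

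The proof of the key estimate takes an arbitrary $\Gamma$-path $\sigma$ from $x$ to $y$ and views it as a path in $\GG(\Gamma,\HH)$. By $\delta$-hyperbolicity of $\GG(\Gamma,\HH)$, $\sigma$ must come within bounded $d_h$-distance of the point $p$ of $[x,y]_h$ nearest $1$. The main obstacle is that $p$ may sit at the apex of a deep hyperbolic cone, so that $d_\Gamma(p,1)$ is exponentially larger than $R=d_h(p,1)$; a single edge of $\sigma$ near $p$ in $\GG(\Gamma,\HH)$ then carries only negligible Floyd weight $\lambda^{d_\Gamma(\cdot,1)}$. The remedy is to follow $\sigma$ down from the vicinity of $p$ to the base of the relevant cone and to show that $\sigma$ passes within bounded $d_\Gamma$-distance of a base-coset vertex lying at $d_\Gamma$-distance $R + O(1)$ from $1$, contributing one edge of Floyd weight at least a constant multiple of $\lambda^{R+A}$. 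For this edge weight to beat the cone compression factor $2^{-t}$ encountered while descending the hyperbolic cone, $\lambda$ must be chosen larger than $1/2$, up to corrections depending on $\delta$. Calibrating $\lambda$ against $\delta$ and the horoball geometry, and making the base-level comparison uniform in $x,y$, is the principal technical task.
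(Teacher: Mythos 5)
The paper contains no proof of this statement: it is quoted as Gerasimov's Map Theorem, \cite[Proposition 3.4.6]{ger-floydgafa}, and used as a black box (e.g.\ as an alternative route to Theorem \ref{nrhthm} via Theorem \ref{floydtrivial}). So there is nothing internal to compare against, and your proposal must be judged as a reconstruction of Gerasimov's argument. Your architecture is the right one: reduce everything to a lower bound on $d_\lambda(x,y)$ in terms of how close a $d_h$-geodesic $[x,y]_h$ comes to the basepoint, then obtain continuity of $F$ and non-triviality of $\partial_\lambda\Gamma$ (the latter under the implicit non-elementarity hypothesis, which you correctly flag) by soft compactness and visibility arguments. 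You also correctly locate the difficulty in the $2^{-t}$ compression inside hyperbolic cones, which is what forces $\lambda$ to be calibrated against the horoball exponent.

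The genuine gap is in your proof of the key estimate, at the step ``$\sigma$ passes within bounded $d_\Gamma$-distance of a base-coset vertex lying at $d_\Gamma$-distance $R+O(1)$ from $1$.'' This is false in general. Let $p\in[x,y]_h$ realize $R=d_h(1,[x,y]_h)$ and suppose $p$ lies at depth $t$ in the cone over a coset $aH$. The entrance point $z\in aH$ of a $d_h$-geodesic from $1$ to $p$ does satisfy $d_h(1,z)\le R+O(t)$, but $d_\Gamma(1,z)$ can be exponentially large in $R$, because the $d_h$-geodesic from $1$ to $z$ may itself traverse other hyperbolic cones, each of which shortens $d_h$ relative to $d_\Gamma$ by an exponential factor. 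Descending only the final cone therefore produces an edge of Floyd weight that may be as small as $\lambda^{2^{R}}$ rather than $\lambda^{R+A}$. For the continuity argument alone a positive lower bound depending only on $R$ would suffice, but your argument does not yet yield even that, since the bound degenerates as the configuration of intermediate horoballs varies with $x,y$ at fixed $R$. The actual content of Gerasimov's proof is to propagate the comparison through \emph{every} cone met between $1$ and $\sigma$: one compares the Floyd length of $\sigma$ computed in $(\Gamma,d_\lambda)$ with its Floyd length computed in the cusped space $\GG(\Gamma,\HH)$ for a second parameter $\mu$, choosing $\lambda$ close enough to $1$ that the loss incurred at each cone is uniformly absorbed, and then invokes the hyperbolic Floyd-map estimate $d_\mu^{h}(x,y)\geq c\,\mu^{(x,y)_1^{h}}$ in the $\delta$-hyperbolic space $(\GG(\Gamma,\HH),d_h)$, where $(x,y)_1^{h}$ denotes the Gromov product there. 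Your single-cone argument is essentially the base case of that induction, not the whole proof.
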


\subsubsection{ Controlled Floyd Separation}\label{cfs}
It is at this stage that we introduce an essentially new ingredient.

\begin{defn}\label{cdef}
Let $f$ be a scaling function and let $l(\sigma)$ denote the Floyd length of a path $\sigma$ with respect to the scaling function $f$. 
Let $\Lambda$ denote a  $G-$invariant collection of paths in $\Gamma$ such that for every $p, q\in \Gamma$, the geodesic  joining
 $p, q$ belongs to $\Lambda$. Let $\Lambda(p,q)$ denote the subcollection of paths in $\Lambda$ joining $p, q$.
We let
$d_{f, \Lambda}(p,q) = inf_{\sigma \in \Lambda(p,q)} (l(\sigma))$.

If $\Lambda$ consists of all $(\kappa, \kappa)$-quasigeodesics then  we also denote $d_{f, \Lambda}(p,q)$ by $d_{f, \kappa}(p,q)$. 

For $p, q \in Pre(\partial)\Gamma$ we define $$d_{f, \Lambda}(p,q) := {\rm liminf}_{p_n\rightarrow p, q_n\rightarrow q} \, d_{f, \Lambda}(p_n,q_n),$$ where $p_n \in [1,p)$
and  $q_n \in [1,q)$ are Cauchy sequences converging to $p, q$ respectively. \end{defn}

Note that for $p, q \in Pre(\partial)\Gamma$, the quantity $d_{f, \Lambda}(p,q)$ is independent of the Cauchy sequences $\{ p_n\}, \{ q_n \}$.
If $\Lambda$ consists of $(\kappa, \kappa)$-quasigeodesics and $p, q \in Pre(\partial)\Gamma$, then $d_{f, \Lambda}(p,q)$ is also denoted as $d_{f, \kappa}(p,q)$.

\begin{defn}
If $d_{f, \Lambda}(p,q)>0$ for $p, q \in Pre(\partial)\Gamma$, we shall say that $p, q$ are {\bf Floyd separated with respect to $\Lambda$}. 
If $d_{f, \kappa}(p,q)>0$ for $p, q \in Pre(\partial)\Gamma$ and for all $\kappa$, we shall say that $p, q$ are {\bf Floyd separated with respect to quasigeodesics}. \end{defn}

\begin{rmk} (Gerasimov) Note that  $d_{f, \Lambda}(p,q)$ is not necessarily a metric as the triangle inequality need not be satisfied. \label{gerrmk} \end{rmk}

In future, if we want to emphasize that the  Floyd length of a path $\sigma$ is being computed with respect to a Floyd function $f$
and  $\Lambda$ consists of $(\kappa, \kappa)$-quasigeodesics, we shall have occasion to use the suggestive notation $l_f^\kappa (\sigma)$
in place of $l_f (\sigma)$ only to remind ourselves that $\Lambda$ consists of all $(\kappa, \kappa)-$quasigeodesics and hence $\sigma$ itself is a 
$(\kappa, \kappa)-$quasigeodesic.

\begin{rmk} \label{dah}  Let $\gamma$ be a bi-infinite Morse geodesic in the Cayley graph $\Gamma$ of a group 
(or more generally a space $X$). Then the end-points of $\gamma$ in  $Pre(\partial)\Gamma$
will necessarily be Floyd separated with respect to quasigeodesics. In particular, hyperbolically embedded subgroups \cite{dgo} of mapping class groups,
$Out (F_n)$ and so on will have boundary points
that are Floyd separated with respect to quasigeodesics.
\end{rmk}

Floyd separation of $p, q \in Pre(\partial)\Gamma$ with respect to quasigeodesics is, in fact,  weaker than the existence of a Morse geodesic joining $p, q$.
To see this, let $G$ be hyperbolic relative to a $\mathbb{Z} \oplus \mathbb{Z} $, e.g. the fundamental group of  a finite volume
hyperbolic 3-manifold with one cusp. Let $\gamma$ be  a bi-infinite geodesic that is a union of two rays $\gamma_1, \gamma_2$
such that $\gamma_1$ is Morse and $\gamma_2$  lies close to a peripheral flat. Then $\gamma$ is not Morse but $p, q \in Pre(\partial)\Gamma$
are Floyd-separated with respect to quasigeodesics.
However, the proof of Floyd separation  with respect to quasigeodesics
that we provide in this paper (Theorem \ref{nontrivial}) actually furnishes the stronger conclusion of existence of a large collection of Morse (quasi)geodesics
(Corollary \ref{morse}). 

Further, we believe that the source of Floyd separation in this paper comes from a subgroup ($K_2 \subset G$ in the example of Section \ref{eg})
which is {\it not } hyperbolically embedded in $G$ (See Question \ref{hypembed} at the end of the paper).

\subsection{Graphical Graded Small Cancellation}\label{wgsc} In this Subsection we give a  discussion of graphical Graded Small Cancellation
following Wise \cite{wise-por, wise-qpor}. Our definitions
are equivalent to those occurring in  \cite{wise-por, wise-qpor}.
 Let $B$ be a metric graph with edges of length one.  Let $A$ be a (non-metric)
topological graph (i.e. equipped with only a CW complex structure). A map $\phi : A \rightarrow B$ is {\em combinatorial} if 
\begin{enumerate}
\item $\phi$  takes vertices 
of $A$ to     vertices 
of $B$.
\item $\phi$ restricts to an immersion on the interior of every edge of $A$ (where an
immersion is a local injection).
\end{enumerate}

A combinatorial map $\phi$ induces a metric graph structure $A_\phi$ on $A$ after subdividing $A$
(e.g. by pulling back the CW structure on $B$ via $\phi$) making $\phi$ a simplicial  isometry restricted to the interior of every edge of $A_\phi$.

A {\em path} $\phi: P\rightarrow B$ is a combinatorial map where $P$
is a real interval.          $|P|$ will denote the length of $P$, i.e. the number of edges in the graph $P_\phi$.    
When defined, $P Q\rightarrow B$ will denote the concatenation of $P\rightarrow B$ and $Q\rightarrow B$.     Let    
$P^{-1}$ denote the inverse of $P$.       In this paper, there will be a     correspondence between
combinatorial paths and words, and so the notation $|W |$ for the length of a word,
agrees with the notation $|W |$ for the length of the corresponding path.  

A (non-metric) graph $A$ is {\bf graded} if it is equipped with a
function $g: Edges(A)\rightarrow \natls$, where $g(J)$ is called  the grade of $J$.
   Let $\phi: A\rightarrow B$ be a combinatorial  map where $A$ is a graded graph and $B$ is a metric graph. Let $A_\phi$ denote the induced metric graph structure on
$A$.
Let $J$ be a 1-cell of the (non-metric) graph
 $A$ and $J_\phi$ the induced metric graph structure on it. $i_J$ denotes the inclusion of $J$ in $A$.

\begin{defn}       A combinatorial path $\alpha: P\rightarrow J $ is a piece of $J$ of {\bf grade $n$} provided  the following
 hold:
\begin{enumerate}
\item  There exists a path $\beta: P\rightarrow J^\prime$ where $J^\prime$ is an edge of $A$ with $g(J^\prime ) = n$ and $n$ is the least such natural number.
\item  $\alpha: P\rightarrow J$ and $\beta: P\rightarrow J^\prime$ represent distinct paths in $A$, i.e.
$i_J (\alpha(P))$ and $i_{J^\prime} (\beta(P))$ are distinct edge-paths in $A_\phi$.
 \item    $\alpha: P\rightarrow J$ and $\beta: P\rightarrow J^\prime$ project to the same path in $B$, i.e. 
$\phi(i_J (\alpha(P)))$ and $\phi(i_{J^\prime} (\beta(P)))$ give the same edge path in $B$.
\item $P$ is maximal with respect to the above conditions.
\end{enumerate}
\end{defn}

\begin{defn}  \cite{wise-qpor}[Definition 2.2]
 Let $\phi: A \rightarrow B$ be a  combinatorial  map from a (non-metric) graded graph $A$ to a metric graph $B$. $\phi$
satisfies the {\bf graded $c(p)$ condition}
provided that for each edge $J$ of the  (non-metric) graded graph $A$, and  for any expression of $ J_\phi$ as the
concatenation $P_1, \cdots, P_r$ of pieces of $J$ of grade $\leq g(J)$, at least $p$ of these pieces  have grade equal to $g(J)$.
\end{defn}

\begin{theorem} \cite{wise-qpor}[Theorem 1.7] Let $\phi : A \rightarrow B$ be a 
combinatorial map between a graded (non-metric) graph $A$ and a metric graph $B$. 
If $\phi$ satisfies the graded $c(3)$ small-cancellation condition then 
$\phi$ is $\pi_1$-injective. If $\phi$ satisfies the graded $c(5)$ small-cancellation condition
then $\phi_\ast (\pi_1 A) \subset \pi_1 B$ is malnormal.\label{wise-gsc} \end{theorem}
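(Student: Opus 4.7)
The plan is to emulate the classical Lyndon--Greendlinger proof of small cancellation, adapted to the graphical and graded setting, via minimal-area diagrams together with a combinatorial Gauss--Bonnet argument. Throughout I will work in the universal cover $\widetilde B$ and represent relations by disc and annular diagrams over $B$ whose 2-cells correspond to edges of the graded graph $A$ (that is, to discs whose boundary traces the label $\phi \circ i_J$ of a single edge $J$ of $A$). The graded $c(p)$ hypothesis then translates to a lower bound on the number of ``full-grade'' pieces appearing on the boundary of each such 2-cell.

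For the first assertion, suppose $\phi$ is not $\pi_1$-injective. Then there is a non-nullhomotopic loop $\gamma$ in $A$ with $\phi\circ\gamma$ null-homotopic in $B$. Fix a diagram $D \to B$ of minimal area among all disc diagrams whose boundary word is $\phi\circ\gamma$; write its 2-cells as $R_1,\dots,R_k$, each a copy of some edge $J_i$ of $A$. Shared boundary arcs between two 2-cells $R_i$, $R_j$ project to a common path in $B$ but, by minimality, come from distinct edges of $A$; hence each such arc is a piece of $J_i$ and of $J_j$. A boundary arc of $D$ that lies on a single 2-cell $R_i$ is either a piece (because $\gamma$ also provides a second lift of the same path) or, if maximal with no second lift, forces $\gamma$ to contain an honest arc of $J_i$, which can be combinatorially reduced away. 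Thus every boundary arc of every interior 2-cell is a piece, and the graded $c(3)$ condition guarantees at least three full-grade pieces on each $\partial R_i$. Assigning corner angles so that a 2-cell with $\geq 3$ full-grade pieces contributes non-positive curvature and boundary corners positive curvature, combinatorial Gauss--Bonnet $\sum \kappa = 2\pi\chi(D) = 2\pi$ yields a contradiction unless $D$ has no 2-cells, i.e.\ unless $\gamma$ was already trivial in $A$.

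For malnormality under $c(5)$, suppose $g \in \pi_1 B \setminus \phi_*(\pi_1 A)$ satisfies $g\,\phi_*(h)\,g^{-1} = \phi_*(h')$ for nontrivial $h,h' \in \pi_1 A$. This yields an annular diagram $D \to B$ whose two boundary components read $\phi\circ h$ and $\phi\circ h'$, connected by an arc labelled by $g$. Take $D$ of minimal area among such annular diagrams with $h, h'$ representing the conjugating classes. As before the interior pieces are forced to be pieces of at most the given grade, but now along each of the two boundary components some pieces may come from the global loops $h$, $h'$ themselves. The $c(5)$ hypothesis gives at least five full-grade pieces per 2-cell, which is enough slack to absorb the two ``boundary'' pieces per cell on the shell cells and still recover strict negative combinatorial curvature on every interior 2-cell. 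Applying combinatorial Gauss--Bonnet on the annulus (where $\chi = 0$) forces $D$ to have no 2-cells, so the conjugating arc $g$ lifts to a path inside a single edge of $A$, placing $g \in \phi_*(\pi_1 A)$, contrary to assumption.

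The main obstacle is the bookkeeping that distinguishes pieces by grade: Wise's innovation over classical small cancellation is that only pieces of the \emph{full} grade $g(J)$ count toward the $c(p)$ bound, not arbitrary pieces of lower grade. Consequently the angle assignment in the Gauss--Bonnet step must be calibrated so that a 2-cell with few full-grade pieces but many lower-grade ones is still assigned sufficient negative curvature to make the global sum strictly negative. Arranging this requires analysing how lower-grade edges of $A$ interact with higher-grade ones in $B$, and ensuring that a piece of $J$ of lower grade $n < g(J)$ arising from coincidence with an earlier edge $J'$ can be rerouted or absorbed by the induction on grade. This is exactly the technical heart of \cite{wise-qpor} and is where I would expect to spend the bulk of the effort.
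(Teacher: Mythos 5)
First, a point of order: the paper does not prove this statement at all --- it is quoted verbatim from Wise \cite{wise-qpor} as an external input (Theorem 1.7 there), so there is no in-paper proof to compare your route against. Judging the proposal on its own terms, it has a genuine structural gap. Here $A$ and $B$ are \emph{graphs}, $\pi_1 B$ is a free group, and the theorem concerns the \emph{subgroup} $\phi_*(\pi_1 A)\le\pi_1 B$, not a quotient of $\pi_1 B$ obtained by killing the loops of $A$. Consequently a disc or annular diagram over $B$ has no $2$-cells whatsoever (it is a tree, respectively a degenerate annulus), and there is no presentation complex in which the edges of $A$ appear as relator $2$-cells. Your entire apparatus --- minimal-area diagrams whose $2$-cells are copies of edges of $A$, angle assignments, combinatorial Gauss--Bonnet with $\sum\kappa=2\pi\chi$ --- therefore has nothing to act on; you have implicitly transplanted the theorem into the setting of Gromov--Ollivier graphical small cancellation \emph{presentations}, which is a different statement. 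The correct argument analyzes free reduction in $\pi_1 B$ of the image of a reduced edge-path $e_1e_2\cdots e_k$ of $A$: cancellation can only occur at vertices of $A$ (since $\phi$ is an immersion on edge interiors), the maximal cancelling segments at such junctions are precisely pieces, and one must show that the graded $c(3)$ (resp.\ $c(5)$) condition prevents an edge of top grade from being entirely consumed by such cancellations --- organized by induction on the grade, or equivalently via the Stallings fiber product $A\otimes_B A$, whose off-diagonal components encode the pieces. For malnormality one runs the same cancellation analysis on the identity $g\,\phi_*(h)\,g^{-1}=\phi_*(h')$ written as a reduced word equation in the free group.

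A second, independent problem: even if one were legitimately in a presentation $2$-complex setting, a bare $C(3)$ hypothesis never supports a Greendlinger/Gauss--Bonnet argument. Classically one needs $C(6)$, or $C(4)$--$T(4)$, or $C(3)$--$T(6)$; with only three (full-grade) pieces per cell and no hypothesis on vertex valences, interior cells can be given positive curvature in any admissible angle assignment, so the claimed calibration ``$\ge 3$ full-grade pieces implies non-positive curvature'' cannot be arranged. The fact that $c(3)$ suffices here is itself a signal that the mechanism is not curvature but the much more elementary ``an edge cannot be swallowed by the cancellations at its two endpoints'' count, which is exactly what a concatenation into at least three top-grade pieces rules out. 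I would recommend discarding the diagrammatic framework and rebuilding the argument around reduced paths in $A$, their images in the free group $\pi_1 B$, and the grade-by-grade control of cancellation.
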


\section{The Main Example}\label{eg}  The main  example is constructed in two steps:

\begin{enumerate} 
\item We first construct a malnormal infinitely generated subgroup $K=K_\infty$ of $F_2$ satisfying a number of properties.
\item Then we double $F_2$ along  $K$ to get $G(\infty)=F_{2}*_{K_\infty} F_{2}$.
\end{enumerate}

\subsection{The Construction of $ K$}\label{constrn}
We shall first construct a sequence of malnormal subgroups $\{K_{n}\} \subset F_2 = F_2(a,b)$ by choosing a sequence of elements $g_n$ 
satisfying a suitable small cancellation condition such that 

\smallskip

\begin{enumerate} 
\item No proper power of $a$ or $b$ is an element of $K_n$ and hence $K_n$ is a proper infinite index subgroup.
\item $K_{n+1}=<K_{n}, g_{n+1}>$ where $g_{n+1} \in F_{2} \setminus K_{n}$.
\item $(g_{n+1},K_n)_1 = {\rm min}_{h \in (F_{2} \setminus K_{n})} (h, K_n)_1$.
\item $g_{n+1}$ is cyclically reduced.
\end{enumerate}

\smallskip

We start the induction with $K_1 := <g_1>-$ a malnormal cyclic subgroup of $F_2$; in particular $g_1$ is not a proper power.
We shall define $K=K_\infty = \bigcup_n K_n$ and ensure that $K$ is of infinite index in $F_2$. 
The choice of $g_n$ satisfying a suitable small cancellation condition will ensure a few things:

\smallskip

\begin{enumerate} 
\item $K=K_{\infty}$  is malnormal in $F_2$.
\item The limit set $\Lambda K$ of  $K$ in $\partial F_2$ is all of  $\partial F_2$.
\item $K$ is of infinite index in $F_2$.
\end{enumerate}

\smallskip

Recall from the Introduction that

\smallskip

\begin{enumerate} 
\item $T(n)$ denotes the tower function of height $n$ with base 2, i.e. $T(n) = 2^{2^{2^{2 \cdots}}}$, where the power is taken
$ n$ times.
\item $U(n)(a,b)$ is the word (element) in the free group $F_2$ on generators $a, b$
given by  $$U(n)(a,b)=[a^{(T(n)+1)}b^{(T(n)+1)}][a^{(T(n)+2)}b^{(T(n)+2)}]\cdots[a^{T(n+1)}b^{T(n+1)}].$$
\end{enumerate}

\smallskip

We choose $g_{n+1}$ inductively  to be of the form $g_{n+1} = h_{n+1} W_{n+1},$ where $h_{n+1}$ satisfies the following:

\smallskip

\begin{enumerate} 
\item $(h_{n+1},K_n)_1 = {\rm min}  (r, K_n)_1$, where the minimum is taken over geodesic rays $r$ in $F_2$ starting at $1$ and $h_{n+1}$
is an initial segment of $r$.
\item $|h_{n+1}|=(h_{n+1},K_n)_1 +1$,
\end{enumerate}

\smallskip

and $W_{n+1}$ is of the form $W_{n+1} = c_{n+1} U(N(n)) d_{n+1}$ where

\smallskip

\begin{enumerate} 
\item[(A)] $N(n) $ is at least twice the maximum value of $m$ for which $ a^{m}$ or $b^{m}$ is a subword of some freely reduced word $w_i \in K_n$.
\item[(B)] $N(n) > |h_{n+1}|$.
\item[(C)] $c_{n+1} , d_{n+1}$ are chosen from $\{ a, b, a^{-1},  a^{-1}\}$ to ensure that $g_{n+1}$ is cyclically reduced.
\end{enumerate}

\smallskip

Define $$ K= K_\infty = \bigcup_n K_n.$$ By the choice of $g_i$, it follows that $K$ is of infinite index in $F_2$ since $K$ is infinitely generated free.

\subsection{$ K$ is malnormal}\label{maln}

\begin{lemma} $K_n \subset F_2$ is malnormal. \label{malnlemma} \end{lemma}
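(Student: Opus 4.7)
The plan is to argue by induction on $n$, using the graphical graded small cancellation machinery of Subsection \ref{wgsc}: set up a graded combinatorial map $\phi_n:A_n\to B$ with $B=Rose(F_2)$ and $(\phi_n)_*\pi_1(A_n)=K_n$, verify that it satisfies the graded $c(5)$ condition, and conclude malnormality of $K_n$ in $F_2$ directly from Theorem \ref{wise-gsc}. The base case $n=1$ is immediate since $g_1$ was chosen so that $\langle g_1\rangle$ is malnormal cyclic. For the inductive step, take $A_n$ to be the Stallings core graph of $K_n$, grade each edge by the smallest $i$ for which $g_i$ traverses it, and form $A_{n+1}$ by wedging a loop labelled $g_{n+1}=h_{n+1}W_{n+1}$ at the basepoint of $A_n$, folding, and assigning grade $n+1$ to every resulting new edge. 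The condition $|h_{n+1}|=(h_{n+1},K_n)_1+1$ forces folding to absorb exactly the first $(h_{n+1},K_n)_1$ edges of the loop into $A_n$; conditions (B) and (C) prevent any further folding at the tail, so the bulk of $W_{n+1}$ contributes fresh grade-$(n+1)$ edges.

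The core combinatorial step is verifying graded $c(5)$ for $\phi_{n+1}$. Fix an edge $J$ of grade $n+1$ and consider an expression of $J_\phi$ as a concatenation $P_1\cdots P_r$ of pieces of grade $\leq n+1$. Pieces of grade $\leq n$ arise from common subwords between $g_{n+1}$ and freely reduced words of $K_n$; by condition (A) every maximal $a^{\pm 1}$- or $b^{\pm 1}$-run in such a word has exponent at most $N(n)/2 < T(N(n))+1$, so no such piece can contain even a single block $a^{T(N(n))+j}$ or $b^{T(N(n))+j}$ of $U(N(n))$. Pieces of grade $n+1$ come from self-overlaps of the $g_{n+1}$-labelled path with itself, its inverse, or a cyclic shift across the seam at the basepoint. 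The strictly increasing tower of exponents $T(N(n))+1,\ldots,T(N(n)+1)$ inside $U(N(n))$ forces any such overlap to align blocks of identical exponent, which pins the alignment almost rigidly up to small slack concentrated in $h_{n+1}$, $c_{n+1}$, and $d_{n+1}$. Counting the distinct exponent-$j$ blocks that $J_\phi$ must cover, every piece-decomposition contains at least five grade-$(n+1)$ pieces, so $c(5)$ holds; Theorem \ref{wise-gsc} then yields malnormality of $K_{n+1}$ in $F_2$.

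The main obstacle is the combinatorial bookkeeping in this $c(5)$ verification: one has to simultaneously track intra-grade overlaps (between two occurrences of $g_{n+1}^{\pm 1}$, possibly crossing the cyclic seam through $h_{n+1}$) and inter-grade overlaps (between $g_{n+1}$ and paths already in $A_n$), while ruling out contrived alignments created by the irregular head segment $h_{n+1}$ or the endpoint letters $c_{n+1},d_{n+1}$. Conditions (A), (B) and (C) working together are precisely what is needed here: (A) bounds the length of inter-grade pieces, (B) prevents the short head from conspiring with an internal $U(N(n))$-block, and (C) stops the cyclic seam from secretly extending any overlap. Once this case analysis is carried out carefully, the piece count is routine and Theorem \ref{wise-gsc} closes the argument.
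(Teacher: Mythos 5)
Your proposal is correct and follows essentially the same route as the paper: both reduce malnormality of $K_n$ to Theorem \ref{wise-gsc} by exhibiting a graded combinatorial map to $Rose(F_2)$ and verifying the graded $c(5)$ condition, with conditions (A) and (B) forcing low-grade pieces to be too short to contain a full block of $U(N(n))$ and the strictly increasing exponents pinning the top-grade pieces to the forms $a^mb^m$ or $b^ma^{m+1}$, of which there are far more than five. The only deviation is organizational: the paper works with the unfolded rose $Rose(K_n)$, grading the $i$th petal by $i$ and checking $c(5)$ directly for each $n$, which fits Wise's edge-by-edge piece formalism more cleanly than your folded Stallings core graph with per-edge grading, but this does not affect the substance of the argument.
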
 
\begin{proof} It suffices by Theorem \ref{wise-gsc} to show that the map $\phi_n : Rose(K_n) \to Rose(F_2)$ between the roses corresponding to $K_n, F_2$
satisfies the graded $c(5)$ condition. 

The $i$th petal (labeled by the generator $g_i$) of $Rose(K_n)$ is assigned grade $i$, $(i=1 \cdots n)$. Then by Conditions (A) and (B) in the construction of 
$W_{n+1}$ above, pieces of grade $n$ are of the form $a^mb^m$ or  $b^ma^{m+1}$. Since the number of such pieces is at least $5$, 
the map $\phi_n : Rose(K_n) \to Rose(F_2)$ 
satisfies the graded $c(5)$ condition. 
\end{proof}

\begin{prop} 
$K=K_\infty \subset F_2$ is malnormal. \label{malnprop} \end{prop}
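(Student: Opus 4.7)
The plan is to deduce malnormality of the ascending union $K = \bigcup_n K_n$ directly from the malnormality of each $K_n$ established in Lemma~\ref{malnlemma}. The key observation is that the property \emph{``$g \notin K$''} is inherited by every finite stage: if $g \notin K$ then $g \notin K_n$ for all $n$.

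First I would set up the standard contradiction. Assume for contradiction that there exist $g \in F_2 \setminus K$ and a nontrivial element $k \in K$ with $gkg^{-1} \in K$ also nontrivial. Call $k' = gkg^{-1}$. Since $K = \bigcup_n K_n$ is an ascending union, there is some index $m$ with $k \in K_m$ and some $m'$ with $k' \in K_{m'}$. Set $n = \max(m, m')$, so that both $k$ and $k' = gkg^{-1}$ lie in $K_n$.

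Next I would invoke Lemma~\ref{malnlemma}: the subgroup $K_n \subset F_2$ is malnormal, so for any $g \in F_2$ with $gK_ng^{-1} \cap K_n \neq \{1\}$ we must have $g \in K_n$. In our situation, the element $k' = gkg^{-1}$ lies in $gK_ng^{-1} \cap K_n$ and is nontrivial, so malnormality of $K_n$ forces $g \in K_n \subset K$. This contradicts the starting assumption that $g \notin K$, completing the proof.

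There is no real obstacle here beyond being careful to note that the ascending union hypothesis lets us replace the subgroups $K_m, K_{m'}$ containing $k, k'$ by a single $K_n$ containing both. The small cancellation input and the delicate choices of $g_{n+1}, h_{n+1}, W_{n+1}$ in the construction are not needed for this step; they were already absorbed into Lemma~\ref{malnlemma}. Thus the proposition follows as a formal consequence of malnormality at every finite stage together with the ascending chain structure.
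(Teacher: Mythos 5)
Your argument is correct and is essentially identical to the paper's: both reduce to the observation that a nested union of subgroups, each malnormal in $F_2$, is malnormal, by locating a single stage $K_n$ containing both the nontrivial element $k$ and its conjugate $gkg^{-1}$ and then applying Lemma~\ref{malnlemma}. Nothing is missing.
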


\begin{proof} 
First observe that a nested union of malnormal subgroups is malnormal. Suppose $H = \bigcup_i H_i$, where $H_i \subset H_{i+1}$ and each $H_i$ is malnormal in $G$.
For some $h\in H, h \neq 1$ and $g \in G$ suppose that $ghg^{-1} = h_1 \in H$. Then there exists $n$ such that $h, h_1 \in H_n$ forcing $g \in H_n$ since $H_n$ is malnormal
in $G$. Hence $g \in H$ and $H$ is malnormal in $G$.

Since $K= K_\infty = \bigcup_n K_n$, and each $K_n$ is malnormal in $F_2$
by Lemma \ref{malnlemma}, it follows that $K$ is malnormal in $F_2$.\end{proof}

\subsection{$ K$ has full limit set}\label{fls}

\begin{prop} $\Lambda K = \partial F_2$. \label{full} \end{prop}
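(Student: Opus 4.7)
The plan is to show that the quantity $m_n := \min_r (r, K_n)_1$, with $r$ ranging over geodesic rays in $F_2$ starting at $1$, tends to infinity as $n \to \infty$. Granting this, for any ray $r$ one has $(r, K_n)_1 \geq m_n \to \infty$, so choosing $k_n \in K_n$ almost realizing the supremum defining $(r, K_n)_1$ yields a sequence in $K$ whose Gromov products with $r$ diverge; hence the endpoint of $r$ lies in $\Lambda K$, proving $\Lambda K = \partial F_2$.

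First I would note that $m_n$ is finite and nondecreasing. Since $K_n$ has infinite index in $F_2$ (Condition (1)), its limit set $\Lambda K_n$ is a proper (Cantor) subset of $\partial F_2$, and any ray $r$ whose endpoint avoids $\Lambda K_n$ satisfies $\sup_{k\in K_n}(r,k)_1 <\infty$; and $K_n\subset K_{n+1}$ gives $m_n\leq m_{n+1}$. By the defining properties of $h_{n+1}$ we also have $|h_{n+1}|=m_n+1$, and $h_{n+1}$ is an initial segment of some ray $r^*$ realizing the minimum $(r^*, K_n)_1 = m_n$.

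The main step is to establish $m_n\to\infty$, which I would prove by contradiction. Assume $m_n\leq M$ for all $n$. Then $|h_{n+1}|\leq M+1$, and since there are only finitely many reduced words of length $\leq M+1$ over $\{a^{\pm 1},b^{\pm 1}\}$, pigeonhole yields indices $n_1<n_2$ with $h_{n_1+1}=h_{n_2+1}=h$. At stage $n_1+1$ we adjoin $g_{n_1+1}=h W_{n_1+1}$; the cyclic reducedness of $g_{n_1+1}$ (condition (C)) forces $h W_{n_1+1}$ to be reduced as written, so the word $h$ is literally a prefix of $g_{n_1+1}$ in the Cayley graph. Therefore $(h,g_{n_1+1})_1=|h|=M+1$, and since $g_{n_1+1}\in K_{n_1+1}\subset K_{n_2}$ we deduce $(h,K_{n_2})_1\geq M+1$. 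But $h=h_{n_2+1}$ is an initial segment of a ray $r^{**}$ realizing $(r^{**},K_{n_2})_1=m_{n_2}\leq M$, and any ray prefixed by $h$ has Gromov product with $K_{n_2}$ at least $(h,K_{n_2})_1\geq M+1$, giving the contradiction $M\geq m_{n_2}\geq M+1$.

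The main potential obstacle is the reducedness step, namely verifying that the geodesic from $1$ to $g_{n+1}$ in the Cayley graph of $F_2$ literally begins with the word $h_{n+1}$; this hinges on there being no cancellation between $h_{n+1}$ and the prefix $c_{n+1}$ of $W_{n+1}$, which is exactly arranged by the choice in (C). Once this is secured, $m_n\to\infty$ follows from the pigeonhole argument above, and the stated equality $\Lambda K=\partial F_2$ is immediate.
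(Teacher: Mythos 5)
Your proof is correct and follows essentially the same route as the paper's: both exploit the minimality condition $(h_{n+1},K_n)_1=\min_r(r,K_n)_1$ together with $|h_{n+1}|=(h_{n+1},K_n)_1+1$ to show that failure of $\Lambda K=\partial F_2$ would force the $h_i$ to range over a finite set. The paper stops at ``a contradiction'' without elaborating, whereas you supply the missing step explicitly (pigeonhole plus the observation that $h_{n_1+1}$ becomes a prefix of $g_{n_1+1}\in K_{n_1+1}$, so it can never recur as a minimizer); this is a worthwhile completion of the argument, not a different one.
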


\begin{proof} This shall follow from the fact that in our construction, $(g_{n+1},K_n)_1 =(h_{n+1},K_n)_1 = {\rm min}_{h \in (F_{2} \setminus K_{n})} (h, K_n)_1$.

It suffices to show that for every $g \in F_2$, there exists $n$ and $h \in K_n$ such that $g \in [1,h]$, i.e. $g$ lies on the geodesic from $1$ to $h$.
Suppose not.  Then  $g \notin K_n$ for all $n$. Further, $(g,K_n)_1 \leq |g|$ for all $n$. Hence $(h_i,K_n)_1 \leq |g|$ for all $i \geq 2$.
Hence $|h_i| \leq |g|+1$ for all $i \geq 2$. This forces $h_i$ to range in a finite set -- a contradiction. \end{proof}

\subsection{Doubling} Define
\begin{enumerate}
\item $G(n)=F_{2}(a,b)*_{{K_n}(a,b) = {K_n}(x,y)} F_{2}(x,y)$.
\item $G=G(\infty)=F_{2}(a,b)*_{K_\infty(a,b) = {K_\infty(x,y)}}F_{2}(x,y)$.
\end{enumerate}


In Section \ref{gsc} we shall need to reindex $G(n)$ and define $G_n := G(n+j_0)$ for a fixed $j_0$; hence we ask the reader's
indulgence in this slight incongruity of notation.

We shall  need the following fact from \cite{BF, mitra-ht}:

\begin{theorem} Let $G_0$ be a hyperbolic group and $H$ a malnormal quasiconvex subgroup. Then $G=G_0*_HG_0$ is hyperbolic and $H$ is quasiconvex in it. \label{dh}
\end{theorem}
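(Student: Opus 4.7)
The plan is to realize $G = G_0 *_H G_0$ as a graph of groups with two vertex groups $G_0$ and a single edge group $H$, and then invoke the Bestvina--Feighn combination theorem \cite{BF}. The group $G$ acts on its Bass--Serre tree $T$ with vertex stabilizers conjugate to the two copies of $G_0$ and edge stabilizers conjugate to $H$, and this is the structural input feeding the combination theorem.

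To apply the combination theorem I would verify: (i) each vertex group is hyperbolic, which is the hypothesis on $G_0$; (ii) each edge group is quasi-isometrically embedded in the adjacent vertex groups, which is immediate from quasiconvexity of $H$ in $G_0$; and (iii) the hallways flare (equivalently annuli flare) condition. The third point is the heart of the matter, and it is where malnormality enters. Malnormality together with quasiconvexity in a hyperbolic group yields a bounded coset intersection property (essentially Short's lemma): for $g \notin H$, the coarse intersection of $H$ with $gHg^{-1}$ has uniformly bounded diameter. This prevents the existence of arbitrarily long thick ``hallways'' of parallel $H$-cosets sitting across successive vertex spaces, and forces the exponential flaring required by Bestvina--Feighn. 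Once this is in place, the combination theorem gives hyperbolicity of $G$.

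For quasiconvexity of $H$ in $G$, I would use the induced tree-of-spaces structure on geodesics. Any geodesic $\gamma$ in the Cayley graph of $G$ between two points of $H$ decomposes into subsegments lying in single vertex spaces, with transitions across edge spaces $gH$ corresponding to edges of $T$. Quasiconvexity of $H$ inside each $G_0$ gives local tracking of $H$-cosets, while the bounded coset intersection property from malnormality prevents long excursions away from $H$ through multiple vertex spaces: any such excursion would produce a long hallway that contradicts flaring. Assembling these local estimates in the (now hyperbolic) $G$ yields a uniform bound on the distance from $\gamma$ to $H$; this is the argument made precise in \cite{mitra-ht}.

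The main obstacle is the hallways flare condition; all substantive use of malnormality is concentrated there. Once flaring is established, hyperbolicity of $G$ follows from Bestvina--Feighn directly, and quasiconvexity of $H$ in $G$ follows from tree-of-spaces geodesic analysis combined with quasiconvexity of $H$ in $G_0$.
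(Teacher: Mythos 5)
Your proposal is correct and follows exactly the route the paper itself takes: the paper states this theorem without proof as an import from \cite{BF, mitra-ht}, i.e.\ hyperbolicity of the double via the Bestvina--Feighn combination theorem (with malnormality plus quasiconvexity supplying the annuli-flare condition through bounded coset intersection), and quasiconvexity of $H$ in $G$ via the tree-of-spaces analysis in \cite{mitra-ht}. Nothing further is needed.
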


The following is an immediate consequence:

\begin{prop} $G(n)$ is hyperbolic and $K_n$ is quasiconvex in it. \label{doublehyp}
\end{prop}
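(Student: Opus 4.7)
The plan is to verify that the hypotheses of Theorem \ref{dh} (due to Bestvina--Feighn \cite{BF} and Mitra \cite{mitra-ht}) are satisfied with $G_0 = F_2$ and $H = K_n$, and then invoke that theorem directly. The free group $F_2$ is (word-)hyperbolic, so the ``hyperbolic'' hypothesis on $G_0$ is immediate.

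Next, I would check that $K_n$ is quasiconvex in $F_2$. By construction, $K_n = \langle g_1, \ldots, g_n \rangle$ is finitely generated, and every finitely generated subgroup of a finitely generated free group is a quasiconvex (indeed, finitely generated free) subgroup; this is classical and follows from the Marshall Hall / Stallings foldings picture of subgroups of free groups as immersed finite graphs, which embed as convex subcomplexes of (a cover of) the rose.

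For malnormality of $K_n$ in $F_2$, I would simply cite Lemma \ref{malnlemma}, which was already established via Wise's graded $c(5)$ criterion (Theorem \ref{wise-gsc}) applied to $\phi_n : Rose(K_n) \to Rose(F_2)$.

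With all three hypotheses in hand, Theorem \ref{dh} applied to the double $G(n) = F_2 *_{K_n} F_2$ yields that $G(n)$ is hyperbolic and that the amalgamating subgroup $K_n$ is quasiconvex in $G(n)$, which is precisely the statement of the proposition. There is no real obstacle here; the proof is essentially bookkeeping --- the content sits in Lemma \ref{malnlemma} and in Theorem \ref{dh}, both already available.
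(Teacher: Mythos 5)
Your proof is correct and follows the same route as the paper: both reduce the statement to Theorem \ref{dh} by citing Lemma \ref{malnlemma} for malnormality and the fact that finitely generated subgroups of free groups are quasiconvex. Nothing is missing.
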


\begin{proof}  It suffices by Theorem \ref{dh} to show that $K_n$ is malnormal quasiconvex. Malnormality is the content of Lemma \ref{malnlemma}.
Since $K_n$ is finitely generated in $F_2$, it is quasiconvex.
\end{proof}

\subsubsection{Geodesics in $G$} Fix a symmetric generating set $\{ a, b, x, y, a^{-1}, b^{-1}, x^{-1}, y^{-1}\}$ of $G(n), G$.
We first fix notation.
Let $a, b$ denote the generators of the first copy of $F_2$ and let $x, y$ denote the generators of the second copy of $F_2$. Let $\Gamma$ denote the Cayley
graph of $G$ with respect to (the symmetrization of) $a, b, x, y$. Let $\Gamma_1$ denote the (sub) Cayley graph of the subgroup $F_2(a,b) \subset G$. Similarly
let $\Gamma_2$ denote the (sub) Cayley graph of the subgroup $F_2(x,y) \subset G$. 
Let $i_1, i_2$ denote the inclusion maps of $\Gamma_1$, $\Gamma_2$ respectively.
There exists  natural projection maps  $\Pi_j: \Gamma \to \Gamma_j$ identifying the generators $a, b$ with $x, y$ respectively. 

\begin{lemma} $F_2(a,b) $ and $F_2(x,y)$ are isometrically embedded in $G(n)$ and $G$. \label{ret} \end{lemma}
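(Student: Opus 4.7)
The plan is to use the two projections $\Pi_1, \Pi_2$ introduced just before the lemma as \emph{retractions} onto the free factors and observe that they are $1$-Lipschitz. First I would verify that each $\Pi_j$ is a well-defined group homomorphism. For $\Pi_1: G \to F_2(a,b)$, this is the map which is the identity on $F_2(a,b)$ and sends $x\mapsto a$, $y\mapsto b$ on the second factor. By the universal property of the amalgamated free product $G = F_2(a,b)\ast_{K_\infty} F_2(x,y)$, such a map descends to a homomorphism provided the two candidate maps agree on the amalgamated subgroup $K_\infty$. But the amalgamation identifies a word $w(a,b)\in K_\infty(a,b)$ with the same word $w(x,y)\in K_\infty(x,y)$, and under $\Pi_1$ both map to $w(a,b)$. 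Exactly the same reasoning applies to $\Pi_2$, and to the truncated groups $G(n)$, since the amalgamation there is over $K_n$.

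Next I would observe that, since $\Pi_j$ sends every generator of $G$ to a generator of $\Gamma_j$ (possibly the identity, never a longer word), the induced map on Cayley graphs is $1$-Lipschitz: any edge of $\Gamma$ is sent to an edge or a vertex of $\Gamma_j$.

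From this the isometric embedding follows immediately. Given $g,h \in F_2(a,b) \subset G$, one inequality, $d_G(g,h)\leq d_{\Gamma_1}(g,h)$, is trivial because every edge of $\Gamma_1$ is an edge of $\Gamma$ under the inclusion $i_1$. For the reverse, take any geodesic $\gamma$ in $\Gamma$ from $g$ to $h$; then $\Pi_1\circ\gamma$ is an edge path in $\Gamma_1$ from $\Pi_1(g)=g$ to $\Pi_1(h)=h$ of length at most $l(\gamma)=d_G(g,h)$, whence $d_{\Gamma_1}(g,h)\leq d_G(g,h)$. The same argument with $\Pi_2$ handles $F_2(x,y)$, and verbatim proofs work inside each $G(n)$ using the restrictions of $\Pi_1,\Pi_2$.

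There is essentially no obstacle: the only thing one must actually check is that $\Pi_j$ respects the amalgamation relations, which is immediate from how the isomorphism $K_n(a,b)\cong K_n(x,y)$ is set up. The retraction-plus-$1$-Lipschitz trick is then a standard packaging of the normal-form theorem for amalgamated free products specialized to the case where the two factors share a compatible generating picture.
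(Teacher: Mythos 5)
Your proposal is correct and is essentially the paper's own argument: the paper also uses the projections $\Pi_j$ (identifying $a,b$ with $x,y$), notes that $d(\Pi_j(u),\Pi_j(v))\leq d(u,v)$, and concludes from $\Pi_j\circ i_j=\mathrm{id}$ that the embeddings are isometric. The only difference is that you spell out the well-definedness of $\Pi_j$ via the universal property of the amalgam, which the paper leaves implicit in the word ``natural.''
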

\begin{proof}
Clearly $d(\Pi_j(u), \Pi_j(v)) \leq d(u, v)$.
Since $\Pi_j \circ i_j$ is the
identity map (on the first or second $F_2$ respectively) it follows that  $F_{2}(a,b)$ and $F_{2}(x,y)$ are isometrically embedded in $G$. \end{proof}

We shall need the following normal form Lemma (cf. \cite[p. 285-6]{ls})
for free products with amalgamation. For the group  $B \ast_A C$, let $i, j$ denote the inclusion maps
of $A$ into $ B, C$ respectively.

\begin{lemma} \label{normal} 
Every element $g$ of $B \ast_A C$ which is not in the image of $A$
can be written in the {\bf normal form}
                                     $  v_1 v_2 \cdots v_n$
where the terms $v_k$ lie in $B - i(A)$ or $C - j(A)$ and alternate between these
two sets. The length $n$, called the {\bf normal length} of the group element, is uniquely determined and two such expressions
$  v_1 v_2 \cdots v_n$ and $  w_1 w_2 \cdots w_n$ give the same element of  $B \ast_A C$ iff there are
elements $a_1 , \cdots , a_{n−1} \in A$ so that
                                   $ w_k = a_{k-1} v_k a_k^{-1}$.
(where $ a_0 = a_n = 1$).\end{lemma}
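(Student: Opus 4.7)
The plan is to use the classical van der Waerden permutation trick, which simultaneously produces a normal form and yields uniqueness up to the stated $A$-conjugation equivalence.

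For existence, first observe that any $g \in B \ast_A C$ can be written as a product $u_1 u_2 \cdots u_m$ with each $u_k \in B \cup C$, since $B \ast_A C$ is generated by $B \cup C$. I then reduce this expression: whenever two consecutive $u_k, u_{k+1}$ lie in the same factor, collapse their product into a single factor; whenever an interior $u_k$ lies in the image of $A$, absorb it into a neighbor (which uses precisely the amalgamation relation identifying $i(A)$ with $j(A)$). Assuming $g$ is not in the image of $A$, this process terminates with an expression $v_1 v_2 \cdots v_n$ whose factors lie alternately in $B \setminus i(A)$ and $C \setminus j(A)$, giving the desired normal form.

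For uniqueness, choose left transversals $T_B \subset B$ for $i(A)$ and $T_C \subset C$ for $j(A)$, each containing $1$. Let $\Sigma$ be the set of finite sequences $(t_1, \ldots, t_n, a)$ with $n \geq 0$, $a \in A$, every $t_k \in (T_B \cup T_C) \setminus \{1\}$, and consecutive entries drawn from opposite transversals. I define a left action of $B$ on $\Sigma$ as follows: to apply $b$, multiply $b$ into the leftmost factor (either $t_1$ when $t_1 \in T_B$, or $i(a)$ when $n=0$, or simply prepend a new entry when $t_1 \in T_C$), decompose the result in the form $t' \cdot i(a')$ with $t' \in T_B$ using the transversal, push $a'$ rightward through the sequence, and treat the degenerate cases (empty transversal part causing a shortening) by the obvious rule. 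Define an analogous left action of $C$ using $T_C$. The crucial compatibility check is that the restrictions of these two actions to $i(A)$ and $j(A)$ coincide, both acting simply by left-multiplying the $A$-coordinate, so by the universal property they assemble into a left action of $B \ast_A C$ on $\Sigma$.

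Starting from the basepoint $\sigma_0 = (\emptyset; 1_A) \in \Sigma$, induction on $n$ shows that if $g = v_1 v_2 \cdots v_n$ is any normal form, then $g \cdot \sigma_0$ has exactly $n$ transversal entries, whose transversal types match the factors containing $v_1, \ldots, v_n$. Hence $n$ is determined by $g$; moreover, two normal-form expressions $v_1 \cdots v_n = w_1 \cdots w_n$ produce identical elements of $\Sigma$, which forces the transversal parts of $v_k$ and $w_k$ to coincide, and the discrepancies between their $A$-parts are precisely the conjugators $a_1, \ldots, a_{n-1} \in A$ in the statement (with $a_0 = a_n = 1$ enforced at the endpoints). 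The main obstacle is verifying that the permutation actions of $B$ and $C$ on $\Sigma$ are well-defined group actions and agree on the amalgamated subgroup; this is a careful but routine case analysis depending on which transversal contains the leading entry and whether the incoming element lies in $i(A)$ or $j(A)$.
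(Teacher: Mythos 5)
The paper does not prove this lemma at all: it is quoted verbatim from Lyndon--Schupp (the citation \cite[p.\ 285-6]{ls} immediately preceding the statement), so there is no in-paper argument to compare against. Your proposal is the standard van der Waerden permutation-trick proof of the classical Normal Form Theorem for amalgamated free products, and it is correct in outline: existence by collapsing and absorbing $A$-elements is fine, and the action of $B \ast_A C$ on the set $\Sigma$ of transversal sequences, applied to the basepoint, does recover both the normal length and the $A$-conjugation relation between two normal forms of the same element. Two small points to tidy up. First, the statement is an ``iff,'' and you only argue the forward direction; the converse --- that $w_k = a_{k-1} v_k a_k^{-1}$ with $a_0 = a_n = 1$ forces $w_1 \cdots w_n = v_1 \cdots v_n$ --- is an immediate telescoping computation, but it should be said. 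Second, essentially all of the real work is hidden in the sentence you defer to the end (that the $B$- and $C$-actions on $\Sigma$ are well-defined group actions agreeing on the amalgamated subgroup); for a result of this vintage that deferral is reasonable, but be aware that the case analysis there (leading entry in $T_B$ versus $T_C$, incoming element in $i(A)$ or not, the shortening case when the transversal part becomes trivial) is exactly what makes the theorem true, and a referee for an original result would want it written out. Given that the paper simply cites the literature here, your self-contained argument is, if anything, more than the paper supplies.
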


\begin{defn}
We shall say that a normal form word $g=v_1 v_2 \cdots v_n$ is in {\bf reduced normal form} if 
for each $i$, the concatenation $v_iv_{i+1}$ is a geodesic word, i.e. amongst all representatives of the form $(v_ia_i) (a_i^{-1}v_{i+1})$ with each of 
$(v_ia_i)$ and $ (a_i^{-1}v_{i+1})$ replaced by geodesic words, $v_iv_{i+1}$ is shortest. \label{rednormal} \end{defn}

\begin{prop} \label{normalgeod} Given an element $g\in G(n)$  there exists a normal form geodesic word  $  v_1 v_2 \cdots v_n$ representing $g$.
\end{prop}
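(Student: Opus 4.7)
My plan is to start from any geodesic representative $w$ of $g$ in the four-letter alphabet $\{a,b,x,y\}^{\pm}$ and to massage it into a normal form without altering its length, exploiting the fact that $K_n$ sits inside each free factor via the letter-swap $a\leftrightarrow x$, $b\leftrightarrow y$. The handle on $w$ is its parse into maximal subwords, each lying in a single two-letter sub-alphabet; this parse automatically alternates between the two factors $F_2(a,b)$ and $F_2(x,y)$.

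If $g\in K_n$ the claim is immediate: by Lemma \ref{ret} a geodesic representative of $g$ inside $F_2(a,b)$ is already a geodesic in $G(n)$ and lies in a single factor. So assume $g\notin K_n$ and write $w=u_1 u_2\cdots u_m$ as the alternating-block parse described above. If none of the $u_i$ happens to represent an element of $K_n$, then interpreting each $u_i$ as an element of its factor minus $K_n$ already exhibits $w$ as a normal-form geodesic in the sense of Lemma \ref{normal}, and we are done.

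Otherwise, for some block $u_i\in K_n$ I would apply a swap-and-merge move: replace $u_i$ by the word $\sigma(u_i)$ obtained by applying the involution $a\leftrightarrow x$, $b\leftrightarrow y$ letter by letter. Since the amalgamation identifies $K_n(a,b)$ with $K_n(x,y)$ precisely via this involution on generators, the new word $w'$ still represents $g$ and has length $|w'|=|w|=|g|_{G(n)}$. Hence $w'$ is again a geodesic and, in particular, freely reduced in the combined four-letter alphabet. This forces the resulting merged single-alphabet block $u_{i-1}\sigma(u_i)u_{i+1}$ (or its truncation if $i\in\{1,m\}$; the case $m=1$ is ruled out by $g\notin K_n$) to be freely reduced in its two-letter sub-alphabet, so the new alternating parse of $w'$ has strictly fewer blocks than the old parse of $w$. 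Iterating this move terminates in at most $m$ steps and produces the required normal-form geodesic.

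The only point to watch is that the swap step preserves geodesicity of the word, but this is immediate from $|w'|=|w|=|g|_{G(n)}$ together with $w'$ representing $g$: length-minimality transfers automatically from $w$ to $w'$. The rest is bookkeeping on the alternating-block parse.
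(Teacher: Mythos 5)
Your argument is correct, but it takes a different route from the paper's. The paper starts from an \emph{arbitrary} word representing $g$, projects it to the Bass--Serre tree $T$ of the splitting, and repeatedly replaces any subword whose image in $T$ backtracks (starts and ends at the same vertex) by a geodesic word in the relevant factor; Lemma \ref{ret} guarantees each replacement does not increase length, so the process terminates in a normal form word no longer than the original. You instead start from a word that is already a geodesic in $G(n)$ and repair its block structure using the letter-swap involution $a\leftrightarrow x$, $b\leftrightarrow y$: since the amalgamation identifies $K_n(a,b)$ with $K_n(x,y)$ exactly via this involution and the swap preserves word length, every move preserves both the group element and geodesicity, and the number of maximal alternating blocks strictly drops, so the process terminates in a normal form. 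Your approach is arguably more self-contained for the main case (it needs Lemma \ref{ret} only to handle $g\in K_n$, and avoids the tree altogether), but it is special to doubles, where the identifying isomorphism of the amalgamated subgroup extends to a length-preserving bijection of the generating alphabets; the paper's shortening argument works for any amalgam whose factors are isometrically embedded and in addition shows that an arbitrary word can be carried to a normal form without increasing length. Both proofs establish the proposition as stated.
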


\begin{proof} Let $T$ denote the Bass-Serre tree of the splitting. Observe first that a normal form word as defined above projects to a path in $T$ that does not return
to a vertex after leaving it. 

Now, let $u$ be any word that projects to a path in $T$ having a non-trivial subpath $\sigma$ that starts and ends at the same vertex of $T$.  Let  $u'$ 
be the subword of  $u$  corresponding to $\sigma$. Then $u'$ corresponds to an element of either $F_2(a,b)$ or $F_2(x,y)$.
Replace $u'$ by the geodesic word in  $F_2(a,b)$ or $F_2(x,y)$ representing it. By Lemma \ref{ret}, it follows that the resulting word is at most as long as $u$.
Continuing this process we finally obtain a normal form word whose length is at most that of $u$.
\end{proof}

\subsubsection{Cyclic Geodesics in $G(n)$}\label{cycgeod}
 We shall now carefully choose a geodesic word $R_{n+1}$ in $G(n)$  denoting the new relator which when added to  the relator set
of $G(n)$ gives $G(n+1)$. $R_{n+1}$ will satisfy the condition  that all its cyclic conjugates are geodesic words in $G(n)$.

We proceed inductively to define $R_n$ of the form $w^{-1}(x,y)w(a,b)$.
Recall that 
$g_{n+1} = h_{n+1} c_{n+1} U(N(n)) d_{n+1}$, where $h_{n+1}$ is a geodesic word in $F_2$.
Let $R_n^0 = g_n^{-1}(x,y)g_n(a,b)$. $R_n$ will be obtained from $R_n^0$ after cyclic reduction.

We shall define $w_{n+1}$ to be a reduced normal form geodesic representative of $g_{n+1}^{-1}(x,y)g_{n+1}(a,b)$ in $G(n)$ 
(of normal length $2$ as per Lemma \ref{normal}) obtained by replacing a maximal subword $w_0$ of the form $w^{-1}(x,y)w(a,b)$ by a subword $w'$ of the same form
using the structure of $G(n)$ as a double along $K_n$. 
By Lemma \ref{normal} any normal form word representing $g_{n+1}^{-1}(x,y)g_{n+1}(a,b)$ is of the form 
$g_{n+1}^{-1}u^{-1}u(x,y)g_{n+1}(a,b)$, where $u \in K_n$. In particular, the shortest normal form representative of 
$g_{n+1}^{-1}(x,y)g_{n+1}(a,b)$ is of the form $w^{-1}(x,y)w(a,b)$ where $w(a,b) \in (F_2(a,b) \setminus K_n(a,b))$ and 
$w(x,y) \in (F_2(x,y) \setminus K_n(x,y))$. Let $R_{n+1}$ denote such a representative. 

Since 
Item(A) of the defining condition on  $U(N(n))$ demands that $N(n) $ is  at least
twice the maximum value of $m$ for which $ a^{m}$ or $b^{m}$ is a subword of some freely reduced word in $K_n$, it follows that after any reduction as above, $w(a,b)$ contains a subword of the form
$V(n)(a,b)=[a^{\lfloor \frac{(T(n)+1)}{2} \rfloor}b^{(T(n)+1)}][a^{(T(n)+2)}b^{(T(n)+2)}]\cdots[a^{T(n+1)}b^{T(n+1)}].$

Next, $g_{n+1}(a,b) g_{n+1}^{-1}(x,y)$ is already a  shortest normal form representative as it contains $b^{T(n+1)}y^{-T(n+1)}$
as  a subword which cannot be shortened further, again by Item(A) of the defining condition on  $U(N(n))$.
It follows by the same reason that every cyclic conjugate of $R_{n+1}$ is a geodesic in $G(n)$.

Thus $R_{n+1}$ satisfies the following properties:

\begin{enumerate}
\item[ Property 1:] All cyclic conjugates of $R_{n+1}$ are geodesics in $G(n)$.
\item[ Property 2:] $R_{n+1}$ is of the form $w_n^{-1}(x,y)w_n(a,b)$ where
$w(a,b)$ contains a subword of the form
$V(n)(a,b)=[a^{\lfloor \frac{(T(n)+1)}{2} \rfloor}b^{(T(n)+1)}][a^{(T(n)+2)}b^{(T(n)+2)}]\cdots[a^{T(n+1)}b^{T(n+1)}].$
\item[ Property 3:] $|R_{n+1}| = O(|V(n)(a,b)|) = O(T(n+1)^2)$
\item[ Property 4:] The largest piece of $R_{n+1}$ is of the form $b^{(T(n+1)-1)}a^{(T(n+1)}$ which has length $2T(n+1)-1$.
(since $N(n) > |h_{n+1}|$ by choice,  initial segments cannot be maximal pieces).
\end{enumerate}

The group $G$ therefore admits a presentation $G=\langle S|\RR\rangle$, where
\begin{enumerate}
\item $S = \{a,b,x,y,a^{-1},b^{-1},x^{-1},y^{-1}\}.$
\item $\RR = \bigcup_i \{ R_i\}$. 
\end{enumerate}

A more general statement is true. We learnt the proof of the following from Dani Wise. We refer the reader to \cite{wise-cbms} for details
on non-positively curved (npc) complexes.

\begin{prop} $G=F*_A F$ is a double of a finitely generated  free group $F$ along a  finitely generated subgroup $A$,
with generating set given by the union of the generating sets of the two copies of $F$. Then $G$ has cohomological dimension $2$.

Then any reduced normal form word $g$ of the form $uvw$ (i.e. of normal length 3), where $u,w$ are geodesic
words in the first factor  and $v$ is a geodesic in the second factor are geodesics in $G$.

More generally any reduced normal form word is a geodesic.
\label{cycgeodprop} \end{prop}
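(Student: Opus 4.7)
The plan splits into two largely independent parts: the cohomological dimension bound, and the geodesicity of reduced normal form words.

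To establish $\operatorname{cd}(G) \le 2$ I realize $G$ as the fundamental group of a 2-dimensional non-positively curved square complex, via the $\mathcal{VH}$ construction of \cite{wise-cbms}. Take a rose $R$ with $\pi_1 R = F$, a finite graph $Y$ with $\pi_1 Y = A$, and immersions $\phi_1, \phi_2 : Y \to R$ inducing the inclusion $A \hookrightarrow F$; form
$$ X \;=\; R_1 \cup_{\phi_1} (Y \times [0,1]) \cup_{\phi_2} R_2 $$
from two copies $R_1, R_2$ of $R$. The squares $e \times [0,1]$ (for edges $e$ of $Y$) give $X$ a $\mathcal{VH}$ structure with vertical edges from the $R_i$ and horizontal edges $v \times [0,1]$ from vertices $v$ of $Y$. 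Because the $\phi_i$ are graph immersions, the link at each vertex of $X$ contains no bigon, so $X$ is non-positively curved and its universal cover $\widetilde X$ is CAT(0); as $X$ is a 2-dimensional $K(G,1)$, one concludes $\operatorname{cd}(G) \le 2$.

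For the geodesicity claim, fix a reduced normal form $g = v_1 \cdots v_n$ and an arbitrary word $w$ representing $g$, and reduce $w$ to normal form in a length non-increasing way. Decompose $w$ into its maximal monochromatic blocks $u_1 \cdots u_m$, each in a single factor; by Lemma \ref{ret} we may replace each by a geodesic in its factor. Whenever an interior block $u_j$ happens to represent an element of $A$, the three blocks $u_{j-1} u_j u_{j+1}$ all lie in a common factor and may be combined into one geodesic monochromatic block by free reduction, which does not increase length. Iterating yields a normal form $\widetilde w_1 \cdots \widetilde w_n$; by Lemma \ref{normal} the normal length is forced to be $n$ and $\widetilde w_i = a_{i-1} v_i a_i^{-1}$ for elements $a_i \in A$ with $a_0 = a_n = 1$, while $|w| \ge \sum_i |\widetilde w_i|_F$.

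The core of the proof is to show $\sum_i |a_{i-1} v_i a_i^{-1}|_F \ge \sum_i |v_i|_F$. Expanding by free reduction in $F_{\epsilon(i)}$, and assuming the left and right junction cancellations $C_L^{(i)} = c(a_{i-1}, v_i)$ and $C_R^{(i)} = c(v_i, a_i^{-1})$ do not overlap inside $v_i$, one has $|a_{i-1} v_i a_i^{-1}|_F = |a_{i-1}|_{F_{\epsilon(i)}} + |v_i| + |a_i|_{F_{\epsilon(i)}} - 2 C_L^{(i)} - 2 C_R^{(i)}$. The reduced normal form condition $|v_i| + |v_{i+1}| \le |v_i a_i^{-1}|_{F_{\epsilon(i)}} + |a_i v_{i+1}|_{F_{\epsilon(i+1)}}$ translates, upon expansion, into the junction bound
$$ C_R^{(i)} + C_L^{(i+1)} \;\le\; \tfrac{1}{2}\bigl(|a_i|_{F_{\epsilon(i)}} + |a_i|_{F_{\epsilon(i+1)}}\bigr). $$
Summing over $i = 1, \ldots, n-1$ (using $C_L^{(1)} = C_R^{(n)} = 0$) gives $\sum_i (C_L^{(i)} + C_R^{(i)}) \le \tfrac{1}{2} \sum_j (|a_j|_{F_1} + |a_j|_{F_2})$, and substituting into the telescoped expression $\sum_i |\widetilde w_i|_F = \sum_i |v_i| + \sum_j (|a_j|_{F_1} + |a_j|_{F_2}) - 2 \sum_i (C_L^{(i)} + C_R^{(i)})$ yields the required inequality.

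The main obstacle is the overlap case $C_L^{(i)} + C_R^{(i)} > |v_i|$, where additional free reduction inside $v_i$ shortens $|\widetilde w_i|$ below what the naive formula predicts. Handling this rigorously requires either arguing that the reduced condition (combined with $v_i \notin A$) rules out overlap, or else upgrading to a geometric argument in the CAT(0) cube complex $\widetilde X$ using hyperplane crossings in the spirit of \cite{wise-cbms}. The normal length three case $g = uvw$ is a useful warm-up: only the single middle factor is flanked on both sides, so the two relevant instances of the reduced condition directly bound its two junction cancellations and overlap can be ruled out by inspection.
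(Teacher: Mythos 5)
Your first part is fine and coincides with the paper's: both realize $G$ as the fundamental group of a $2$-dimensional non-positively curved $\mathcal{VH}$ square complex built from two copies of a graph with fundamental group $F$ and an edge space $Y\times[0,1]$ attached by immersions (the paper cites Theorem 7.2 of \cite{wise-cbms} for exactly this), whence $\mathrm{cd}(G)\le 2$.

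The geodesicity argument, however, has a genuine gap, and it is the one you yourself flag at the end. Your telescoped identity $\sum_i |\widetilde w_i| = \sum_i |v_i| + \sum_j\bigl(|a_j|_{F_1}+|a_j|_{F_2}\bigr) - 2\sum_i\bigl(C_L^{(i)}+C_R^{(i)}\bigr)$ is only valid when, for each $i$, the two junction cancellations inside $a_{i-1}v_ia_i^{-1}$ do not meet. If $C_L^{(i)}+C_R^{(i)}>|v_i|$, a suffix of $a_{i-1}$ comes into contact with a prefix of $a_i^{-1}$ and further free reduction can make $|a_{i-1}v_ia_i^{-1}|$ far smaller than the formula predicts, so the lower bound collapses. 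The reduced normal form condition only constrains \emph{consecutive} junctions (roughly $C_R^{(i)}+C_L^{(i+1)}\le |a_i|$); it says nothing about the sum of the two cancellations at the two ends of a single syllable $v_i$, so overlap is not "ruled out by inspection" even in the length-three warm-up, and you offer no argument for the general case. Until that case is closed the proof does not stand. The paper avoids the bookkeeping entirely: it takes a disk diagram $D$ in the non-positively curved square complex between the path $\sigma$ spelling $uvw$ (three vertical segments joined by two horizontal edges) and an arbitrary competitor $\eta$, and shows no dual curve emanating from $\sigma$ can return to $\sigma$ --- dual curves leaving $v$ cannot end on $u$ or $w$ because $uv$ and $vw$ are geodesic, and a dual curve running from $u$ to $w$ would force a dual curve of $v$ to start and end on itself, contradicting reducedness. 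Hence every dual curve from $\sigma$ ends on $\eta$, so $\eta$ has at least as many vertical edges as $\sigma$ and $\sigma$ is geodesic; the general statement follows by induction after amalgamating the first $n-1$ syllables into one geodesic. This is precisely the hyperplane-crossing upgrade you name as a fallback; you would need to actually carry it out, or else close the overlap case combinatorially, for your argument to be complete.
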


\begin{proof} By Theorem 7.2 of \cite{wise-cbms},
 $G$ can be represented as the fundamental group of a non-positively curved
 ($\mathcal{VH}$) square complex formed by taking two graphs $\mathcal G$ (with $F$ as fundamental group)
as vertex spaces, and an edge space that is a ${\mathcal G} \times I$  that is attached by a local isometry on each side.
It immediately follows that $G$ has cohomological dimension $2$.

Then $g$ corresponds to a  path $\sigma$ labeled $uvw$ consisting of three vertical paths labeled $u$, $v$, $w$
 joined by two horizontal edges, one from the terminal point of $u$ to the initial point of $v$ and one 
from the terminal point of $v$ to the initial point of $w$.
 
Consider a disk diagram $D$ between $\sigma$ and an arbitrary edge path $\eta$ with the same end-points.
The dual curves emanating from $v$ cannot end on $u$ or $w$ because of the hypothesis that $uv$ and $vw$ are geodesics.
The dual curves emanating from $u$ cannot end on the $w$ since then the middle path $v$
 must have dual curves that start and end on itself, contradicting that it is itself reduced.
Thus all dual curves emanating from $\sigma$ must end on $\eta$.
Thus $\eta$ has at least as many vertical edges as $\sigma$, forcing $\sigma$ to be a geodesic.
This proves the first statement.

The proof of the last statement is now completed  by a straightforward induction: We repeat the above argument after
 combining the first $(n-1)$ words into a single geodesic.
\end{proof}

\subsection{Subgroups of $G$}

\begin{prop} All finitely presented subgroups $H$ of $G$ are hyperbolic.\label{subgp} \end{prop}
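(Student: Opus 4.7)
The plan is to exploit the direct limit structure $G = \varinjlim G_n$, where each $G_n$ is the hyperbolic group $G(n+j_0)$ furnished by Proposition \ref{cycgeodprop}, combined with the lacunary hyperbolicity of $G$ established in Theorem \ref{g-gsc}. The natural surjections $\pi_n : G_n \twoheadrightarrow G$ arise because $G(n+1)$ is obtained from $G(n)$ by imposing the relator $R_{n+1}$ that identifies the two copies of the new generator of $K_{n+1}$.

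First, fix a finite presentation $H = \langle x_1, \ldots, x_k \mid r_1, \ldots, r_m \rangle$, regarded as a subgroup of $G$ via the given embedding. Since each $\pi_n$ is surjective, I choose preimages $\tilde{x}_i \in G_n$ of the generators $x_i$, for some (initially arbitrary) $n$. Each relator, evaluated as $r_j(\tilde{x}_1, \ldots, \tilde{x}_k) \in G_n$, lies in the kernel of $\pi_n$, and this kernel is the direct limit of the kernels of the intermediate surjections $G_n \to G_m$ for $m \geq n$. Because only finitely many relators $r_j$ need to be killed, for $n$ sufficiently large all of them already vanish in $G_n$ itself. This produces a homomorphism $\phi : H \to G_n$. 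Since the composition $\pi_n \circ \phi : H \to G$ coincides with the inclusion $H \hookrightarrow G$, and the latter is injective, $\phi$ is injective. Hence $H$ embeds in the hyperbolic group $G_n$.

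The remaining step is to upgrade this embedding into hyperbolicity of $H$. This is where I would invoke the Olshanskii--Osin--Sapir theorem \cite[Theorem 1.11]{oos}: every finitely presented subgroup of a lacunary hyperbolic group is hyperbolic. Since Theorem \ref{g-gsc} shows $G$ is lacunary hyperbolic, the result applies and concludes the proof. As a corollary, $G$ cannot contain any Baumslag--Solitar group $BS(m,n)$ with $|mn| \geq 1$, since these are finitely presented and non-hyperbolic.

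The main obstacle is precisely the last step: an injection $H \hookrightarrow G_n$ into a hyperbolic group does not by itself imply $H$ is hyperbolic, as finitely presented subgroups of hyperbolic groups need not be hyperbolic. What makes the argument work here is the graded small cancellation structure established in Section \ref{gsc}, which forces $\pi_n$ to be injective on balls of radius growing rapidly relative to the hyperbolicity constant of $G_n$; this is exactly the injectivity-radius hypothesis exploited in the proof of \cite[Theorem 1.11]{oos} to promote the lift $\phi$ to a quasi-isometric embedding of $H$ into $G_n$ once $n$ exceeds the Dehn function bound provided by the presentation of $H$.
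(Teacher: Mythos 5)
The first half of your argument---lifting the finite presentation of $H$ to some $G_n$, observing that the finitely many relators already die there, and using injectivity of the composite $H\to G_n\to G$ to embed $H$ in the hyperbolic group $G_n$---is exactly the paper's argument. The second half, however, has a genuine gap. There is no theorem of the form ``every finitely presented subgroup of a lacunary hyperbolic group is hyperbolic,'' and indeed no such theorem can exist: every hyperbolic group is lacunary hyperbolic (all of its asymptotic cones are $\mathbb R$-trees), and Brady's example exhibits a hyperbolic group containing a finitely presented non-hyperbolic subgroup. What \cite{oos} does prove is that a finitely presented lacunary hyperbolic \emph{group} is hyperbolic, which is a statement about $G$ itself, not about its subgroups. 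Your attempted repair via injectivity radii also does not close the gap: the lacunarity condition controls the injectivity radius of the quotient maps $G_n\to G_{n+1}$ relative to $\delta_n$, but says nothing about the distortion of the image of $H$ inside $G_n$, which is precisely what goes wrong in Brady-type examples.

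The paper closes the argument by a different route: by Proposition \ref{cycgeodprop}, each $G(n)=F_2\ast_{K_n}F_2$ is the fundamental group of a nonpositively curved ($\mathcal{VH}$) square complex, hence has cohomological dimension $2$, and Gersten's theorem \cite{gersubgp} states that every finitely presented subgroup of a hyperbolic group of cohomological dimension $2$ is hyperbolic. The dimension hypothesis is exactly what excludes the Brady phenomenon, and it is the ingredient your proposal is missing. If you want to salvage your write-up, keep your first paragraph verbatim and replace the appeal to lacunary hyperbolicity by the cohomological dimension $2$ argument.
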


\begin{proof} Let $H = \langle S_1| R_1 \rangle$ be a finite presentation. Let $S_1 =  \{ s_1, \cdots, s_n \}; 
R_1 = \{ r_1, \cdots, r_m \}$. Let $H_0 $ be the subgroup of $G(0)$ generated by $S_1$. Then there exists $n$ such that each $r_i$ is trivial  in $G(n)$
and hence in $G(m)$ for all $m \geq n$. Since $H$ is a subgroup of $G$, it follows that $H$ is a  subgroup of  $G(m)$ for all $m \geq n$. 
By the first part of Proposition \ref{cycgeodprop}, $G$ has cohomological dimension $2$.

Hence $H$ is a finitely presented subgroup of a hyperbolic group $G(n)$ of cohomological dimension $2$.
It now follows from \cite{gersubgp} that $H$ is hyperbolic.
\end{proof}

\section{Non Relative Hyperbolicity}\label{nrh}
In this Section we show that:

\begin{theorem} \label{nrhthm} $G =F_{2}*_{K} F_{2}$ is not hyperbolic relative to any proper collection of parabolic subgroups $\PP$. \end{theorem}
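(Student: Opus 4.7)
The plan is to assume for contradiction that $G$ is hyperbolic relative to a collection $\mathcal{P} = \{P_1, \ldots, P_m\}$ of proper subgroups, and to derive a contradiction by comparing the limit sets of the infinitely generated subgroup $K$ and of the free factor $F_2(a,b)$ inside the Bowditch boundary $\partial_B G$. First I would establish that $K$ is relatively quasiconvex in $G$: by Lemma \ref{ret}, both $F_2(a,b)$ and $F_2(x,y)$ are isometrically embedded in $G$ and hence undistorted, so by Hruska's Theorem \ref{hruska-qi} both are relatively quasiconvex in $G$, and their intersection $K = F_2(a,b) \cap F_2(x,y)$ is then relatively quasiconvex by the intersection statement of Theorem \ref{hruska-intn}(2).

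Next I would identify limit sets in $\partial_B G$. Since $F_2(a,b)$ is itself hyperbolic and sits inside $G$ as a relatively quasiconvex subgroup, the inclusion extends to a continuous $F_2(a,b)$-equivariant map $\phi : \partial F_2 \to \partial_B G$ with image $\Lambda F_2(a,b)$, and for any subgroup $H \subset F_2(a,b)$ the limit set of $H$ in $\partial_B G$ is the $\phi$-image of its limit set in $\partial F_2$. Proposition \ref{full} gives that the limit set of $K$ inside $\partial F_2$ is all of $\partial F_2$, so applying $\phi$ yields $\Lambda K = \Lambda F_2(a,b)$ inside $\partial_B G$. By the symmetric construction of $K$ inside $F_2(x,y)$, we obtain $\Lambda K = \Lambda F_2(x,y)$ as well.

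I would then split into cases on $|\Lambda K|$. If $|\Lambda K| \geq 2$, Yang's Lemma \ref{yang} applied to the inclusion $K \subset F_2(a,b)$ together with $\Lambda K = \Lambda F_2(a,b)$ forces $K$ to have finite index in $F_2(a,b)$; this contradicts the fact that $K$ is infinitely generated while $F_2(a,b)$ is finitely generated, since finite index subgroups of finitely generated groups are themselves finitely generated. If instead $\Lambda K = \{p\}$ is a single point, then both $F_2(a,b)$ and $F_2(x,y)$ stabilize $p$; since $F_2(a,b)$ is free of rank two and therefore not virtually cyclic, the $G$-stabilizer of $p$ cannot be elementary, so $p$ must be a parabolic fixed point and $\mathrm{Stab}_G(p) = g P_i g^{-1}$ for some $g \in G$ and some $i$. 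Then $G = \langle F_2(a,b), F_2(x,y)\rangle \subseteq g P_i g^{-1}$, contradicting properness of $P_i$. The case $\Lambda K = \emptyset$ is ruled out because $K$ is an infinite subgroup of $G$.

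The principal technical obstacle is the existence and properties of the boundary map $\phi : \partial F_2 \to \partial_B G$: for a relatively quasiconvex hyperbolic subgroup inside a relatively hyperbolic group such a continuous equivariant extension is known to exist, as a Cannon--Thurston-type phenomenon, but one must take care near parabolic points of $\partial_B G$ where several boundary points of $\partial F_2$ may collapse to a single point of $\partial_B G$. The one-point limit-set branch of the case analysis is designed precisely to absorb this potential degeneracy, so that the dichotomy is complete regardless of how $\partial F_2$ maps into $\partial_B G$.
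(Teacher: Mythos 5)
Your argument is correct and follows essentially the same route as the paper: undistortedness of the free factors gives relative quasiconvexity via Theorem \ref{hruska-qi}, hence relative quasiconvexity of $K = F_2(a,b)\cap F_2(x,y)$ via Theorem \ref{hruska-intn}, and then the full limit set from Proposition \ref{full} combined with Lemma \ref{yang} forces $K$ to have finite index in $F_2(a,b)$, a contradiction. Your extra care in transporting the limit set into the Bowditch boundary and in disposing of the degenerate cases $|\Lambda K|\leq 1$ (which the paper leaves implicit before invoking Yang's lemma, whose hypothesis requires $|\Lambda K|\geq 2$) only tightens the same argument.
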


\begin{proof}
Suppose not, i.e. $G$ is  hyperbolic relative to a proper collection of parabolic subgroups $\PP$.

By Theorem \ref{hruska-qi}, $F_{2}(a,b)$ and $F_{2}(x,y)$ are relatively quasiconvex in $G$ and hence are relatively hyperbolic with respect to $\PP \cap F_{2}(a,b)$, 
$\PP \cap F_{2}(x,y)$ respectively. Since
$K=F_{2}(a,b)\cap F_{2}(x,y) (\subset G)$, it follows that $K$ is also relatively quasiconvex in $G$ by Theorem \ref{hruska-intn}.
But $K \subset F_{2}(a,b) \subset G$. Hence $K$ is also relatively quasiconvex in $F_{2}(a,b)$, $F_{2}(x,y)$.

By Proposition \ref{full}, $K$ has full limit set in $F_{2}(a,b)$ as well as in $F_{2}(x,y)$, i.e. $\Lambda K=\partial F_{2}(a,b) \, \,$
and $\Lambda K=\partial F_{2}(x,y)$.
By Lemma \ref{yang}, it follows that $K$ is of finite index in $F_{2}(a,b)$. This contradicts the construction of $K$ as an infinite index subgroup.

This final contradiction yields the Theorem. \end{proof}

Theorem \ref{nrh} will also follow from Theorems \ref{floydtrivial} and \ref{gerfloydgafa}, but the above proof is direct and simple.

\section{Graded Small Cancellation Structure of $G$}\label{gsc}

In this section our primary goal is to show that the group $G = F_2(a,b) *_K F_2(x,y)$ constructed in Section \ref{eg} satisfies 
a certain (technical) graded small cancellation condition \cite{oos, olshanski-book}. This will give (as usual with small cancellation theory)
a number of geometric consequences.

\subsection{Definitions and Preliminaries on Graded Small Cancellation}
\begin{defn}
A set $\mathcal R$ of words in an
alphabet is {\bf symmetrized} if for any
$R\in \mathcal R$, all cyclic shifts of $R^{\pm 1}$ are contained in
$\mathcal R$ and each $R$ is reduced. (Note that the feature that each $R$ is {\bf reduced} is incorporated as part of the definition.)\end{defn}

\begin{defn}  \cite[Definition 4.2]{oos}  \label{piece}
Let $G$ be a group with a finite symmetric generating set $S$. Let $\RR$ be a 
symmetrized set of words in $S$. Given $\epsilon >0$, a subword $U$ of a word $R\in \mathcal R$ is called an {\it
$\epsilon$-piece}  if there exists  $R^\prime \in \RR$
such that:

\begin{enumerate}
\item $R= UV$, $R^\prime = U^\prime V^\prime $, for
some words $V, U^\prime , V^\prime $,
 \item $U^\prime = YUZ$ 
for  words $Y,Z$ satisfying $\max \{ |Y|, \,|Z|\} \leq \epsilon $.
\item $YRY^{-1}\neq R^\prime $.
\end{enumerate}
\end{defn}

In the above definition $`='$ is to be interpreted as equality as elements of $G$.
In the following definition we rename condition SC of \cite{oos} as the relative small cancellation condition.

\begin{defn}\cite[Definition 4.3]{oos}\label{rscdefn} Let $\epsilon \geq 0$, $\mu\in (0,1)$, and $\rho
>0$.
A symmetrized set $\RR$ of words in a symmetrized generating set $S$ for a group $G$
satisfies the {\bf relative small cancellation condition $C(\epsilon, \mu ,\rho)$}  if
\begin{enumerate}
\item[($RSC_1$)] The words in $\mathcal R$ represent geodesics in $G$.
\item[($RSC_2$)] $|R|\geq \rho $ for any $R\in \mathcal R$.
\item[($RSC_3$)] for any $\epsilon$-piece $U$ and any  $R\in \RR$, with $U \subset R$, $|U| \leq \mu |R|$.
\end{enumerate}
\end{defn}

\begin{defn}\cite[Definitions 4.12, 4.14]{oos}\label{gscdefn} Fix  $\alpha = 10^{-2}, K=10^6$.  The presentation
\begin{equation}
G=\langle S |  \mathcal{ R}\rangle =\left\langle S\,\left|\,
\bigcup\limits_{i=0}^\infty \mathcal R_i\right.\right\ra
\end{equation}
 is a {\bf graded small cancellation presentation} if there exist
sequences $\e =(\e_n)$, $\mu
=(\mu _n)$, and $\rho =(\rho _n)$ of positive real numbers
($n=1,2\dots $) satisfying:
\begin{enumerate}
\item[($ GSC_0$)] The group $G_0=\langle S\mid \mathcal R_0\rangle $
is $\delta_0$-hyperbolic for some $\delta_0$.
\item[($GSC_1$)] $\epsilon_{n+1}>8\max\{|R|, R\in \RR_n\}=O(\rho_n)$.
\item[($GSC_2$)] $\mu_n\rightarrow 0$ as $n \rightarrow \infty$, $\mu _n\leq
\alpha$, and $\mu_n\rho_n>K\epsilon_n$ for any $n\ge 1$.
\item[($GSC_3$)] For all $n\geq 1$, $\RR_{n}$ satisfies $C(\e_n,
\mu _n, \rho _n)$ over $G_{n-1}=\left\langle S\, \left|\,\right.
\bigcup\limits_{i=0}^{n-1} \mathcal R_i\right\rangle .$
\end{enumerate}
\end{defn}

\begin{rmk} Though we have fixed $\alpha = 10^{-2}, K=10^6$ following \cite{oos} above, any sufficiently large $K$ and sufficiently small $\alpha$ suffices.\end{rmk}

We shall need the following:

\begin{lemma} \cite[Lemmas 4.13, 4.18]{oos} \label{delta} Let 
\begin{equation}
G=\langle S | \mathcal R\rangle =\langle S|
\bigcup_0^\infty \mathcal R_i\rangle
\end{equation}
 be a {\bf graded small cancellation presentation}. Then $G_n$ is $\delta_n$-hyperbolic, where $\delta_n \leq  4 max_{R\in \RR_n} |R|$.

                Any subword of any word $R_n\in \RR_n$ of length at most $|Rn |/2$ is a $(1 - o (1), 0)$-quasi–geodesic in the Cayley graph of $G$ with respect to the
generating set $S$.\end{lemma}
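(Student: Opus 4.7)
The plan is to prove both clauses simultaneously by induction on $n$, using the standard framework of small cancellation over hyperbolic groups (Olshanskii's book \cite{olshanski-book}, adapted to the graded setting as in \cite{oos}). The base case $n=0$ is hypothesis $(GSC_0)$, which gives that $G_0$ is $\delta_0$-hyperbolic. Assuming $G_{n-1}$ is $\delta_{n-1}$-hyperbolic and controlling $\delta_{n-1}$ relative to the lengths of relators of lower grade, I would then invoke the classical small cancellation machinery over a hyperbolic group, where condition $(GSC_1)$ guarantees that $\epsilon_n$ dominates the hyperbolicity constant $\delta_{n-1}$ of the previous stage, so that $\epsilon_n$-pieces correctly capture the combinatorial geometry of $G_{n-1}$.

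For the first clause, the heart of the argument is a Greendlinger-type lemma for reduced van Kampen diagrams over $G_n = \langle S \mid \bigcup_0^n \RR_i\rangle$: given any reduced diagram $D$ with boundary label a relation in $G_n$ but not in $G_{n-1}$, there is a face $\Pi$ with boundary word in $\RR_n$ such that the intersection of $\partial\Pi$ with $\partial D$ has length at least $(1-\mu_n - o(1))|\partial\Pi|$. The pieces estimate $(RSC_3)$, combined with $\mu_n \leq \alpha = 10^{-2}$ and $\mu_n \rho_n > K \epsilon_n$ from $(GSC_2)$, forces such a face to exist by comparing contributions from $\epsilon_n$-pieces (bounded by $\mu_n|R|$) against the total perimeter. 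This yields a linear isoperimetric inequality whose constant is of order $\max_{R\in\RR_n}|R|$; standard arguments (e.g.\ \cite[Ch.~6]{olshanski-book}) then give hyperbolicity with $\delta_n \leq 4\max_{R \in \RR_n}|R|$. A book-keeping exercise on constants, comparing $\delta_n$ with the next $\epsilon_{n+1}$, is what makes $(GSC_1)$ consistent with the induction.

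For the quasigeodesic clause, let $U$ be a subword of some $R \in \RR_n$ with $|U| \leq |R|/2$, and let $\gamma$ be a geodesic in the Cayley graph of $G$ with the same endpoints. Consider a minimal disc diagram $D$ bounded by $U \cdot \gamma^{-1}$, viewed as a diagram over $G$, but organized by grade: cells of grade $>n$ appearing in $D$ lift to relations that could replace $U$ by shorter words over $G_n$, contradicting $(RSC_1)$ applied at appropriate grades, while cells of grade $\leq n$ are controlled by the pieces condition. The key estimate is that any cell $\Pi$ adjacent to the $U$-side of $\partial D$ can overlap $U$ only in an $\epsilon_n$-piece, so its contribution to the shortening is at most $\mu_n |\partial\Pi|$. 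Summing, the total shortening of $U$ is bounded by $\mu_n |U| + O(\epsilon_n)$, so $|\gamma| \geq (1-\mu_n)|U| - O(\epsilon_n) = (1-o(1))|U|$ because $\mu_n \to 0$ and $\epsilon_n$ is absorbed into the additive $o(|U|)$ using $|U|\geq\rho_n/2$ (or the bound is vacuous for short $U$).

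The main obstacle I expect is the careful handling of the two-sided induction: to run the $n$-th step we need not merely hyperbolicity of $G_{n-1}$ but precise control over its constant, and we simultaneously need the quasigeodesic statement at level $n-1$ to justify that subwords of lower-grade relators behave geodesically inside $G_n$ when analyzing pieces. Thus the two clauses have to be threaded together in a single induction, with the compatibility inequalities of $(GSC_0)$--$(GSC_3)$ (in particular $\mu_n\rho_n > K\epsilon_n$ and $\epsilon_{n+1} > 8\max|R|$ with $R \in \RR_n$) providing just enough slack at each step. Once this bookkeeping is set up, the actual estimates are routine applications of the Strebel-type classification of reduced diagrams adapted to small cancellation over hyperbolic groups.
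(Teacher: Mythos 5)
The paper does not prove this lemma: it is quoted, with citation, from Olshanskii--Osin--Sapir \cite{oos} (their Lemmas 4.13 and 4.18), so there is no internal proof to measure your attempt against. Judged against the actual argument in the cited source, your outline follows the right general strategy --- induction on the grade, a Greendlinger/contiguity lemma for reduced diagrams over $G_{n-1}$, a linear isoperimetric inequality, and the parameter inequalities $(GSC_0)$--$(GSC_3)$ supplying just enough slack to close the induction. That is indeed how \cite{oos} proceeds, building on Olshanskii's small cancellation theory over hyperbolic groups.

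That said, your sketch has soft spots that would need real work to firm up. First, in the quasigeodesic clause, cells of grade $m>n$ in the diagram are not excluded by $(RSC_1)$ (``relators are geodesic words'' says nothing about whether higher-grade cells can appear); they are excluded by a perimeter count: the Greendlinger-type lemma forces a grade-$m$ cell to contribute roughly $(1-O(\mu_m))\rho_m$ of its boundary to $\partial D$, while $|\partial D|\leq 2|U|\leq |R_n|\ll \rho_m$ by $(GSC_1)$. Second, the conclusion is a $(1-o(1),0)$-quasigeodesic in the second (length-comparison) sense of Section \ref{prelim}, with additive constant \emph{zero}; your plan to ``absorb $\epsilon_n$ into the additive $o(|U|)$'' is therefore not available, and one must instead use $\mu_n\rho_n>K\epsilon_n$ to fold the $O(\epsilon_n)$ error into the multiplicative factor $1-O(\mu_n)$, uniformly over all subsegments of $U$, including short ones. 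Third, the explicit constant $\delta_n\leq 4\max_{R\in\RR_n}|R|$ does not fall out of ``standard arguments''; it requires the quantitative thin-triangle estimate carried out in \cite{oos}. None of these is a fatal error of direction, but as written the proposal is a plan for a proof rather than a proof, and the paper itself (reasonably) outsources all of it to the citation.
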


\subsection{Graded Small Cancellation for $G$}

\begin{theorem} The group $G=F_2 *_KF_2$ equipped with the presentation $G=\langle S|\RR\rangle$, where
 $S = \{a,b,x,y,a^{-1},b^{-1},x^{-1},y^{-1}\}$
and $\RR = \bigcup_i\{ R_i\}$ (cf. Section \ref{cycgeod} and Proposition \ref{cycgeodprop}) is a graded small cancellation group and the presentation is a  graded small cancellation presentation. 
\label{g-gsc} \end{theorem}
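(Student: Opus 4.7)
The plan is to verify the four conditions of Definition~\ref{gscdefn} for a suitable reindexing of the presentation. Fix $j_0$ large (chosen at the end), absorb $R_1, \ldots, R_{j_0}$ into $\RR_0$, and set $\RR_n$ equal to the symmetrization of $\{R_{n+j_0}\}$ for $n\geq 1$, so that $G_n = G(n+j_0)$. Define $\rho_n := |R_{n+j_0}|$, which by Property~3 of Section~\ref{cycgeod} satisfies $\rho_n = \Theta(T(n+j_0)^2)$, and set $\epsilon_{n+1} := 9\rho_n$ (taking initial parameters $\rho_0, \epsilon_1$ appropriate for $\RR_0$); this gives $(GSC_1)$ by construction. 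Condition $(GSC_0)$ follows from Proposition~\ref{doublehyp} applied to $G(j_0)$. For $(RSC_1)$, Property~1 from Section~\ref{cycgeod} says every cyclic conjugate of $R_{n+j_0}$ is a geodesic word in $G(n+j_0) = G_n$, and since $G_n$ is a quotient of $G_{n-1}$, the same strings remain geodesics in $G_{n-1}$; $(RSC_2)$ is then immediate.

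For $(GSC_2)$, the tower-function growth gives $\rho_{n-1}/\rho_n \to 0$ super-exponentially; setting $\mu_n := 2\max\{9K\rho_{n-1}/\rho_n,\; B_n/\rho_n\}$, where $B_n$ is the $\epsilon_n$-piece bound obtained in $(RSC_3)$ below, and taking $j_0$ large, guarantees $\mu_n \leq \alpha$ for all $n$, $\mu_n \to 0$, and $\mu_n\rho_n > K\epsilon_n$. The substantive step is $(RSC_3)$: the structured core $V(n+j_0-1)(a,b)$ of the reduced normal form $w_{n+j_0-1}(a,b)$ is a concatenation of blocks $a^k b^k$ with $k$ strictly increasing from $\lfloor(T(n+j_0-1)+1)/2\rfloor$ up to $T(n+j_0)$, so any subword containing two complete consecutive such blocks uniquely determines its position in a cyclic conjugate of $R_{n+j_0}^{\pm 1}$. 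Property~4 then bounds literal pieces by $2T(n+j_0)-1$. The slack $\epsilon_n = 9\rho_{n-1} = O(T(n+j_0-1)^2)$ is, by the doubly exponential growth of $T(\cdot)$, strictly smaller than a single block of length $T(n+j_0)$, so a positional shift of size at most $\epsilon_n$ cannot enable new matches. Case analysis at the junction between the $F_2(x,y)$ and $F_2(a,b)$ halves of $R_{n+j_0}$ is automatic, as the halves use disjoint generators. Together these yield $B_n = O(T(n+j_0))$, so $B_n/\rho_n \to 0$.

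The hard part is that $(RSC_3)$ is imposed \emph{over $G_{n-1}$}: matching subwords in distinct $R, R' \in \RR_n$ need only be equal modulo $\RR_0 \cup \cdots \cup \RR_{n-1}$, not as free words. To handle this I would use that cyclic conjugates of $R_{n+j_0}$ are themselves geodesic in $G_{n-1}$ by $(RSC_1)$ and that, by Lemma~\ref{delta}, $G_{n-1}$ is $\delta_{n-1}$-hyperbolic with $\delta_{n-1} \leq 4\rho_{n-1}$; so an equality in $G_{n-1}$ of two geodesic subwords forces them to coincide up to slack at most $O(\delta_{n-1}) = O(\rho_{n-1})$. The lower bound $\epsilon_{n+1} > 8\max_{R\in\RR_n}|R| = 8\rho_n$ in $(GSC_1)$ is precisely what makes this ``hyperbolic slack'' $O(\rho_{n-1})$ fit inside $\epsilon_n$, reducing the relative small-cancellation statement to the literal combinatorial one handled above.
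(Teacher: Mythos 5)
Your proposal follows essentially the same route as the paper's proof: reindex by a large shift, set $\rho_n=|R_{n+k}|$ and $\epsilon_{n+1}$ a fixed multiple of $\rho_n$, use the $\delta_{n-1}=O(\rho_{n-1})$-hyperbolicity of $G_{n-1}$ (Lemma \ref{delta}) to convert an $\epsilon_n$-piece into a literal piece at the cost of $O(\epsilon_n+\delta_{n-1})$, bound literal pieces by $O(T(n+k))$ via Property 4 of Section \ref{cycgeod}, and then take $\mu_n$ to be a maximum of two ratios with the shift chosen large enough that $\mu_n\le\alpha$. One small correction: Property 1 states that the cyclic conjugates of $R_{n+j_0}$ are geodesic in $G(n+j_0-1)=G_{n-1}$ --- which is exactly what $(RSC_1)$ requires --- not in $G_n$, where $R_{n+j_0}$ is a relator and hence certainly not geodesic; so your transfer step from $G_n$ down to $G_{n-1}$ rests on a false premise and is also unnecessary, since the property directly supplies the conclusion you need.
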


\noindent {\bf Proof:} The proof will proceed by checking the properties in Definitions \ref{gscdefn} and \ref{rscdefn}.

\noindent {\bf Proof of Condition $GSC_0$:} By Theorem \ref{dh}, each $G(k) =F_2 *_{K_k}F_2$  is hyperbolic and hence any $G(k)$ can be taken as the starting group $G_0$.
$k$ will be taken large enough to ensure that the parameter values $\alpha = 10^{-2}, K=10^6$ are satisfied
and will be determined during the course of the proof. We reindex if necessary by setting $G_i = G(k+i)$ for all $i$.

\noindent {\bf Proof of Condition $GSC_1$:}

\begin{enumerate}
\item $\rho_n = |R_{n+k}|$. 
\item $\epsilon_{n+1} = 10 \rho_n$. 
\end{enumerate}

Condition $GSC_1$ immediately follows.

\noindent {\bf Proof of Condition $GSC_2$:} 
It suffices to choose $\mu_n > \frac{ 10^7 \rho_{n-1}}{\rho_n}$. Recall from Property 3 satisfied by $R_{n+k}$ that $\rho_n = O((T(n+k))^2)$.
Since $\frac{(T(n+k-1))^2}{(T(n+k))^2} \rightarrow 0$ as $n\rightarrow \infty$ very fast (more than exponentially), it will be easy to make a  specific choice for $\mu_n$
during the proof of $GSC_3$ below.

\subsubsection{Proof of $GSC_3$:}
This is the crucial step where we shall prove that the set  $\RR(n+k)$ of cyclic conjugates of $R_{n+k}$ satisfies the {\bf relative
 small cancellation condition } $C(\epsilon_{n},\mu_{n},\rho_{n})$ (Definition \ref{rscdefn}) over 
 $G_{n-1}=\langle S|\RR=\underset{i=0}{\overset{n+k-1}\bigcup}\RR(i)\rangle$, for suitable choices of 
$\epsilon_{n}$, $\mu_{n}$ and $\rho_{n}$, for all $n$.

We observe that, by Proposition \ref{doublehyp} all the groups $G(i)$ are $\delta_{i}$-hyperbolic. 
We need to therefore choose parameters $\epsilon_{n}$, $\mu_{n}$ and $\rho_{n}$ for $\RR(n+k)$ 
 satisfying $C(\epsilon_{n},\mu_{n},\rho_{n})$ over 
$G_{n-1}=\langle S|\RR=\underset{i=0}{\overset{n+k-1}\bigcup}\RR(i)\rangle$.

\noindent {\bf Proof of $RSC1$:} 
 $\RR(n+k)$ consists of cyclic conjugates of exactly one relator $R_{n+k}$ all of which are geodesics by their construction (Section \ref{cycgeod}
 and Proposition \ref{cycgeodprop}). 

\noindent {\bf Proof of $RSC2$:} 
Since all cyclic conjugates of $R_{n+k}$ are geodesics (Section \ref{cycgeod}  and Proposition \ref{cycgeodprop}), all of them have the same length $\rho_n$. 

\noindent {\bf Proof of $RSC3$:} 
We will show that for the choice of $\epsilon_{n} = 10 \rho_{n-1}$ as in the Proof of Condition $GSC_1$,  the length of an $\epsilon_{n}$-piece 
in $\RR(n+k)$ has length less than $\mu_{n}|R_{n+k}|$ for  suitably chosen $\mu_{n}$. 

Let $U$ be any $\epsilon_{n}$-piece. By the form of the words $R_{n+k}$, a subword $W$
with length at least half that of $U$ is a geodesic lying
 in one of the factors $F(a,b)$ or $F(x,y)$. Further, since $U, U^\prime$ are geodesics starting and ending at most $\epsilon_n$-apart, it follows that
there exists a geodesic subword $W^\prime$ of $U^\prime$ such that the initial and final points of $W, W^\prime$ are at most $(\epsilon_n + 4 \delta_{n-1})$ apart.
Hence assume without loss of generality that $U$ is an $(\epsilon_n + 4 \delta_{n-1})$-piece lying entirely in $F_2(a,b)$.
It follows from Definition \ref{piece} that

\begin{enumerate}
\item $R= UV$, $R^\prime = U^\prime V^\prime $, for
some words $V, U^\prime , V^\prime $ and $R, R^\prime \in \RR(n+k)$,
 \item $U^\prime = YUZ$ 
for  words $Y,Z$ satisfying $\max \{ |Y|, \,|Z|\} \leq (\epsilon_n + 4 \delta_{n-1}) $.
\item $YRY^{-1}\neq R^\prime $.
\end{enumerate}

After reducing $Y^{-1}U^\prime Z^{-1}$ to reduced normal form, we obtain therefore a  piece $U_0$ (in the usual small cancellation
sense) of length at least $|U| - 2 (\epsilon_n + 4 \delta_{n-1})$.
Recall that by Property 4 of $R_{n+k}$, a maximal piece (in the usual small cancellation
sense) has length at most $2T(n+k)$. 
Hence $|U_0| < 2 T(n+k)$ and so $|U| < 2 T(n+k) + 2 (\epsilon_n + 4 \delta_{n-1})$.

\noindent {\bf Choice of $\mu_n$:}
It therefore suffices to choose $\mu_n$ such  that $2 T(n+k) + 2 (\epsilon_n + 4 \delta_{n-1}) \leq \mu_n \rho_n$. Define $\mu_n = 2max 
\{ \frac{2T(n+k)+20 \rho_{n-1}}{ \rho_n}, \frac{ 10^7 \rho_{n-1}}{\rho_n}\}$.
Since $\rho_n = O((T(n+k))^2)$, and $\delta_{n-1} = O(\rho_n)$, it follows that $\mu_n \rightarrow 0$ as $n \rightarrow \infty$.

\noindent {\bf Choice of $k$:}
It remains to choose $k$. The choice of $k$ is dictated by the requirement that $\mu_n \leq \alpha$ for all $n$. Since $\mu_m \rightarrow 0$ as $m \rightarrow \infty$
it follows that there exists $k>0$ such that $\mu_m \leq \alpha$ for all $m\geq k$. This decides the choice of $k$ and completes the proof of
Theorem \ref{g-gsc}. $\Box$

\section{Asymptotic Cone of $G=F_{2}*_{K} F_{2}$}

We refer the reader to \cite{ds} for details on tree-graded spaces and \cite{gromov-ai, oos} for details on asymptotic cones.

\subsection{Tree-graded Spaces}\label{tgac}
\begin{defn}                    Let $X$ be a complete geodesic metric space and let $\mathcal P$ be a collection of closed
geodesic subsets.  Then $ X$ is said to be   tree-graded with respect to $\mathcal P$ if the following hold:
\begin{enumerate}
 \item Any two elements of $\mathcal P$  have at most one common point.
\item Every simple geodesic triangle (i.e. a simple loop composed of three geodesics) in $X$ is contained
      in exactly one element of $\mathcal P$.
\end{enumerate}
\end{defn}

\begin{defn} \cite{oos}  If $ X$ is   tree-graded with respect to $\mathcal P$ such that each element of $\mathcal P$ 
 is  a circle of radius uniformly bounded away from $0$ and $\infty$, then $X$ is called a circle-tree.
\end{defn}

\subsection{Asymptotic Cones of Graded Small Cancellation Groups} Let $\omega$ be a non-principal ultrafilter.
We shall say that $a_{i}= O_\omega (b_i)$ if the $\omega-$limit of $\{ \frac{a_i}{b_i}\}$ is finite and non-zero. Similarly, 
$a_{i}= o_\omega (b_i)$ if the $\omega-$limit of $\{ \frac{a_i}{b_i}\}$ is zero.
\begin{defn}\label{od-vis}\cite[Definitions 4.16,4.19]{oos}
Given an ultrafilter $\omega $ and a scaling sequence $d=(d_n)$,  a sequence of real numbers $f=(f_n)$ is said to be {\bf $(\omega ,
d)$--visible} or simply asymptotically visible, if there exists a subsequence $(f_{n_i})$ of $f$ such
that $f_{n_i} = O_\omega (d_i)$.

For a group $G$ and its Cayley graph $\Gamma$, we shall say that a sequence of loops $\{ \sigma_i\}\subset \Gamma$ is asymptotically visible
if the sequences $\{ d(1, \sigma_i) \}$ and $\{ |\sigma_i| \}$ are asymptotically visible.
\end{defn}

The following is one of the main Theorems of \cite{oos}.
\begin{theorem}\label{circletree} \cite{oos}[Theorem 4.17]
Let $G$ be a group having a graded small cancellation presentation with symmetrized relator set $\RR= \cup_i\{ R_i \}$, where $\{ R_i \}$ is the grade $i$ 
symmetrized relator set.
For any ultrafilter
$\omega$, and any sequence of scaling constants $d=(d_n)$, the
asymptotic cone $\CG$ is a circle-tree. $\CG $ is an $\mathbb
R$--tree if and only if the sequence $(\rho_n)$ from Definition
\ref{gscdefn} is not $(\omega , d)$--visible.

Further any  simple loop $C$ in $\CG$ can be realized as an $\omega-$limit of translates of $\{ R_i \}$'s.
\end{theorem}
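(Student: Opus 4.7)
The plan is to build the collection $\PP$ witnessing tree-gradedness of $\CG$ directly from the symmetrized relators, then verify the two axioms of tree-gradedness using the graded small cancellation hypotheses. First, I would define $\PP$ to be the set of subsets of $\CG$ arising as $\omega$-limits of sequences of loops $(\ell_i)$ in the Cayley graph $\Gamma(G,S)$, where each $\ell_i$ is a translate of a cyclic conjugate of some $R_{n(i)}\in\RR_{n(i)}$ with $|R_{n(i)}| = O_\omega(d_i)$ and $d_\Gamma(1, \ell_i) = O_\omega(d_i)$. By Lemma \ref{delta}, subwords of $R_n$ of length at most $|R_n|/2$ are $(1-o(1),0)$-quasigeodesics, which guarantees that after rescaling by $d_i$ the loops $\ell_i$ survive to embedded loops in the $\omega$-limit, producing a genuine metric circle of diameter $\lim_\omega |R_{n(i)}|/(2 d_i) \in (0,\infty)$. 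A relator $R_n$ contributes a nontrivial circle to $\PP$ exactly when $\rho_n = |R_n|$ is $(\omega,d)$-visible; this already yields the $\R$-tree criterion, since absence of $(\omega,d)$-visibility for $(\rho_n)$ forces $\PP = \emptyset$ and the standard ``no loops'' argument then identifies $\CG$ with an $\R$-tree.

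Next, I would verify the two axioms of tree-gradedness. For the first axiom, suppose two circles $P, P' \in \PP$ arising as limits of cyclic relators $R_{n(i)}, R'_{n'(i)}$ meet in two distinct points. Then $R_{n(i)}$ and $R'_{n'(i)}$ share two synchronized subwords of length proportional to $d_i$ in positions bounded apart, which produces an $\epsilon_{n(i)}$-piece of length a definite fraction of $\rho_{n(i)}$. The graded condition $C(\epsilon_n, \mu_n, \rho_n)$ with $\mu_n \to 0$ forces this ratio to tend to zero $\omega$-almost surely, contradicting positivity of the overlap, so $|P \cap P'| \le 1$. For the second axiom, given a simple geodesic triangle $\Delta$ in $\CG$, realize it as the ultralimit of a sequence of loops $\gamma_i$ of length $O(d_i)$ in $\Gamma$; applying a Greendlinger-type lemma available for $C(\epsilon_n, \mu_n, \rho_n)$-presentations extracts a cyclic conjugate of some $R_{n(i)}$ whose bulk---a subword of length at least $(1-\mu_n)|R_{n(i)}|$---essentially follows $\gamma_i$. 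The $\omega$-limit of these pieces is a circle $P \in \PP$ containing $\Delta$, and simplicity of $\Delta$ forces this $P$ to be unique. The same argument, applied to any simple loop in $\CG$, yields the ``further'' clause.

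The step I expect to be the main obstacle is the Greendlinger-type input feeding the second axiom, since upgrading the classical lemma to the graded setting requires controlling how relators of differing grades interact inside a single loop. The parameter choices in Definition \ref{gscdefn}, especially $\epsilon_{n+1} > 8\max\{|R| : R \in \RR_n\}$ and $\mu_n\rho_n > K \epsilon_n$, are engineered precisely to force a single dominant grade $n(i)$ to appear in any loop witnessed at scale $d_i$: lower grade relators are too short to account for a fixed fraction of $\gamma_i$, while higher grade relators would violate the $\epsilon_{n+1}$ bound when attempting to fit into a disc of perimeter $O(d_i)$. Once the dominant grade is isolated, the classical ``majority of a relator appears on the boundary of the disc'' conclusion proceeds as in free small cancellation theory, and the ultralimit then assembles these bulky pieces into the required circle.
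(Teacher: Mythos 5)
You should first be aware that the paper does not prove this statement at all: it is quoted verbatim from Olshanskii--Osin--Sapir \cite{oos} (Theorem 4.17 there), and the only proof-like remark in the paper is that the final clause about simple loops ``follows from Lemma 4.23 of \cite{oos}.'' So there is no in-paper argument to compare against; the relevant comparison is with the proof in \cite{oos}. Your outline does follow the strategy of that source --- take $\PP$ to be the ultralimits of asymptotically visible translates of relators, use the quasigeodesicity of half-relators (Lemma \ref{delta}) to see that these limits are embedded circles of positive finite diameter, and verify the two tree-graded axioms from the $C(\epsilon_n,\mu_n,\rho_n)$ conditions --- so the architecture is right.

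However, as a proof the proposal has a genuine gap exactly where you flag it, and the gap is larger than ``upgrading the classical lemma to the graded setting.'' The base group $G_0$ is hyperbolic rather than free, so van Kampen diagrams over the presentation contain $\RR_0$-cells in addition to graded cells, and the ``bulk of a relator appears on the boundary'' conclusion is not obtained by a counting argument with the parameters $\epsilon_{n+1}>8\max\{|R|:R\in\RR_n\}$ and $\mu_n\rho_n>K\epsilon_n$; in \cite{oos} it rests on the machinery of contiguity subdiagrams for small cancellation over hyperbolic groups imported from Olshanskii's earlier work, and that is the actual content of the theorem. Two further steps are asserted but would need real work: (i) a simple geodesic triangle in $\CG$ is an ultralimit of hexagons in $\Gamma$ that need not be simple nor bound diagrams of controlled area, so ``realize it as the ultralimit of a sequence of loops $\gamma_i$ and apply Greendlinger'' hides the passage from metric data in the cone to combinatorial data in a diagram (this is \cite{oos}, Lemmas 4.21--4.23); and (ii) your verification of the first tree-graded axiom only treats two circles coming from relators of the same grade, whereas two visible circles at scale $d_i$ may come from different grades, where the $\epsilon$-piece condition does not directly apply and one must instead invoke geodesicity of the higher-grade relator over the lower-grade quotient. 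None of these invalidate the approach --- it is the correct one --- but as written the proposal reproduces the statement's scaffolding while leaving its substance to an unproved black box.
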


The last statement follows from
Lemma 4.23 of \cite{oos}.

\subsection{Properties of the Asymptotic Cone of $G$}
\begin{prop} $Con^{\omega}(G)$ is a circle-tree. \label{cctree}\end{prop}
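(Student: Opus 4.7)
The plan is essentially to reduce this immediately to the machinery already developed. By Theorem \ref{g-gsc}, we have established that the presentation $G = \langle S \mid \RR \rangle$ with $S = \{a,b,x,y,a^{-1},b^{-1},x^{-1},y^{-1}\}$ and $\RR = \bigcup_i \{R_i\}$ (the symmetrized closure of the relators $R_i$ constructed in Section \ref{cycgeod}) is a graded small cancellation presentation in the sense of Definition \ref{gscdefn}. Therefore Theorem \ref{circletree} of Olshanskii--Osin--Sapir applies verbatim to $G$, and its conclusion is exactly that $\CG$ is a circle-tree for every choice of non-principal ultrafilter $\omega$ and every scaling sequence $d = (d_n)$.

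The only minor bookkeeping to attend to is that Theorem \ref{circletree} is stated for the symmetrized relator set, whereas we defined $\RR$ as a union of single relators $R_i$; but the cyclic conjugates of each $R_i$ are all geodesics (Property 1 from Section \ref{cycgeod}, established via Proposition \ref{cycgeodprop}), and the graded small cancellation conditions we verified in Theorem \ref{g-gsc} were in fact checked against the full set $\RR(n+k)$ of cyclic conjugates of $R_{n+k}$ in the proof of $RSC_3$. So the symmetrization is already built into what we have shown, and no further work is needed.

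The main obstacle has therefore already been surmounted in Section \ref{gsc}: the real content was verifying the relative small cancellation condition $C(\epsilon_n, \mu_n, \rho_n)$ with the tower-function growth of $|R_{n+k}|$ dominating both the hyperbolicity constants $\delta_{n-1}$ and the piece-length bound $2T(n+k)$ coming from Property 4 of $R_{n+k}$. Once that is in hand, Proposition \ref{cctree} is a one-line citation. If desired, one could further note that whether the asymptotic cone is actually a nontrivial circle-tree (rather than an $\R$-tree) depends on the $(\omega,d)$-visibility of the sequence $\rho_n = |R_{n+k}|$, but this refinement is not needed for the statement as given.
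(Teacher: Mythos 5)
Your proof is correct and is essentially identical to the paper's: Proposition \ref{cctree} is deduced there in one line by combining Theorem \ref{g-gsc} with Theorem \ref{circletree}. The extra remarks on symmetrization and $(\omega,d)$-visibility are accurate but not needed for the statement.
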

\begin{proof} This follows from Theorems \ref{g-gsc} and \ref{circletree}. \end{proof}

Let $\GG$ denote $Con^{\omega}(G)$.
\begin{prop} $i: K_2 \rightarrow G$ induces a bi-Lipschitz embedding $\hat{i}: Con^{\omega}(K_{2}) \hookrightarrow \GG$. \label{blemb} \end{prop}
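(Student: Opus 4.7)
The plan is to show that the inclusion $i : K_2 \hookrightarrow G$ is a quasi-isometric embedding; the bi-Lipschitz embedding on asymptotic cones then follows from a standard ultralimit argument.

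First, $K_2 = \langle g_1, g_2\rangle$ is a finitely generated (rank $2$) subgroup of the free group $F_2$, hence quasiconvex in $F_2$. Since $F_2$ is hyperbolic, quasiconvexity is equivalent to undistortion, so the inclusion $K_2 \hookrightarrow F_2$ is a quasi-isometric embedding: there exist constants $C_1 \geq 1$ and $C_2 \geq 0$ such that
\[
\tfrac{1}{C_1}\, d_{K_2}(k,k') - C_2 \;\leq\; d_{F_2}(k,k') \;\leq\; C_1\, d_{K_2}(k,k') + C_2
\]
for all $k,k' \in K_2$. Combining this with Lemma \ref{ret}, which states that $F_2(a,b) \hookrightarrow G$ is an isometric embedding (with $\Pi_1$ as a $1$-Lipschitz retraction), the composition $K_2 \hookrightarrow F_2(a,b) \hookrightarrow G$ is itself a quasi-isometric embedding with the same constants.

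Second, I invoke the standard functoriality of asymptotic cones under quasi-isometric embeddings. Fix the basepoint $1 \in K_2 \subset G$ and the scaling sequence $d = (d_n)$ used in forming $\GG = Con^\omega(G)$ and $Con^\omega(K_2)$. For any sequence $(k_n) \subset K_2$ with $d_{K_2}(1, k_n)/d_n$ bounded $\omega$-a.s., the upper bound above gives that $d_G(1, i(k_n))/d_n$ is also bounded $\omega$-a.s., so the map
\[
\hat i\bigl([(k_n)]\bigr) := [(i(k_n))]
\]
is a well-defined function $Con^\omega(K_2) \to \GG$. For any two such sequences $(k_n), (k'_n)$, dividing the quasi-isometry inequalities by $d_n$ and passing to the $\omega$-limit kills the additive constant $C_2$ (since $C_2/d_n \to 0$) and yields
\[
\tfrac{1}{C_1}\, d_{Con^\omega(K_2)}\bigl([(k_n)],[(k'_n)]\bigr) \;\leq\; d_{\GG}\bigl(\hat i[(k_n)], \hat i[(k'_n)]\bigr) \;\leq\; C_1\, d_{Con^\omega(K_2)}\bigl([(k_n)],[(k'_n)]\bigr).
\]
In particular $\hat i$ is bi-Lipschitz with constant $C_1$, and the lower bound gives injectivity.

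There is no real obstacle here: the argument is a direct assembly of (i) quasiconvexity of finitely generated subgroups of free groups, (ii) the retraction of Lemma \ref{ret}, and (iii) the standard fact that quasi-isometric embeddings of groups descend to bi-Lipschitz embeddings of their asymptotic cones. The only thing to be slightly careful about is that the basepoints $1 \in K_2$ and $1 \in G$ agree, which is automatic.
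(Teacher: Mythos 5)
Your argument is correct and follows the same route as the paper: the paper's proof simply notes that $K_2 \hookrightarrow F_2(a,b)$ is a quasi-isometric embedding (by quasiconvexity of finitely generated subgroups of free groups), that $F_2(a,b) \hookrightarrow G$ is isometric by Lemma \ref{ret}, and then invokes the standard descent to asymptotic cones. You have merely spelled out the ultralimit bookkeeping that the paper leaves implicit.
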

\begin{proof} Since $K_2 \hookrightarrow F(a,b)$ is a quasi-isometric embedding and $F(a,b) \hookrightarrow G $ is an isometric embedding by Lemma \ref{ret}, it follows that
$i: K_2 \rightarrow G$ is a  quasi-isometric embedding. The result follows. \end{proof}

\subsubsection{Limit Circles in $Con^{\omega}(G)$ intersect $Con^{\omega}(K_{2})$ trivially} 
The asymptotic cone $\mathcal{C}^{\omega}(G,\{d_n\})$ of the graded small cancellation 
group $G$ is a circle-tree with limit circles of uniformly bounded radius. 
Let $\CC$ denote the collection of circles in  $\mathcal{C}^{\omega}(G,\{d_n\})$.

The asymptotic cone $\mathcal{C}^{\omega}(K_2,\{d_n\})$ of the free group 
$K_2$ (constructed at the second stage of the inductive construction of $K$) is an $\mathbb{R}$-tree $\TT$. The next Theorem is crucial in showing that
$K_2$ is quasiconvex in $G$ in a strong sense. This, in turn will lead to the conclusion that any pair of distinct geodesic rays in $K_2$ starting at the origin
are Floyd separated with respect to quasigeodesics. 

\begin{theorem} For any $C \in \CC$, the intersection $C\cap \TT$ is either empty or contains a single point. \label{trivialintn} \end{theorem}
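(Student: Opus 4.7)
The approach is by contradiction: assume $C \cap \TT$ contains distinct points $p, q$. Using the two descriptions of $p$ and $q$ (as limits of vertices on a relator loop, via $C$, and as limits of $K_2$-elements, via $\TT$), I would produce a long common sub-word between an arc of a relator $R_{m_i}$ and an $F_2(a,b)$-geodesic joining $K_2$-elements. A comparison of maximum power-run lengths on either side then yields a contradiction, since the relator side contains power-runs $a^k, b^k$ with $k \to \infty$ (via the $V$-pattern of Property 2 in Section \ref{cycgeod}), whereas the $K_2$ side is a reduction of a word in the fixed generators $g_1^{\pm 1}, g_2^{\pm 1}$ and hence has power-runs bounded by a fixed constant.

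By Theorem \ref{circletree} (last clause, from OOS Lemma 4.23), $C$ is the $\omega$-limit of translates of a sequence of relators $R_{m_i} \in \RR$ with $|R_{m_i}|$ comparable to the scaling factor $d_i$; after conjugation each loop is based near the identity. Then $p$ is realised both as the $\omega$-limit of a sequence $(p_i)$ of vertices on $R_{m_i}$ and as the $\omega$-limit of a sequence $(p_i') \subset K_2$, and likewise for $q$, with $d_G(p_i, p_i'), d_G(q_i, q_i') = o_\omega(d_i)$ and $d_G(p_i, q_i) \asymp d_i$. By Property 2 of Section \ref{cycgeod}, each $R_{m_i}$ has the form $w^{-1}(x,y)\,w(a,b)$, so its loop splits into an $F_2(a,b)$-half and an $F_2(x,y)$-half. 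A short case analysis (the $F_2(x,y)$-case being symmetric via the amalgamation $G = F_2(a,b) \ast_K F_2(x,y)$, and the cross-half case reducing by truncation to a sub-arc of length $\asymp d_i$ in a single half) leaves the main case in which both $p_i, q_i$ lie on the $F_2(a,b)$-half. Since $F_2(a,b) \hookrightarrow G$ is isometric by Lemma \ref{ret}, I argue inside the tree $F_2(a,b)$. Let $\alpha_i$ be the sub-arc of $w(a,b)$ between $p_i$ and $q_i$: as a sub-word of the reduced word $w(a,b)$ it is itself reduced, hence an $F_2(a,b)$-geodesic of length $\asymp d_i$; let $\gamma_i$ be the $F_2(a,b)$-geodesic from $p_i'$ to $q_i'$, i.e.\ the reduced $a,b$-form of $(p_i')^{-1} q_i' \in K_2$, also of length $\asymp d_i$. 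In the tree $F_2(a,b)$, two geodesics whose endpoints pair up $o_\omega(d_i)$-closely share a common backbone sub-geodesic of length $\asymp d_i$ (their symmetric difference being confined to $o_\omega(d_i)$-neighbourhoods of the endpoints); this backbone is a common sub-word of $\alpha_i$ and $\gamma_i$.

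Again by Property 2, $w(a,b)$ contains the pattern $V(\cdot)(a,b) = \prod_k a^k b^k$ with $k$ ranging from $T(m_i-1)+1$ to $T(m_i)$, and this pattern dominates $|w(a,b)|$. Since the backbone has length $\asymp T(m_i)^2$ while each $V$-block has length at most $2T(m_i)$, the backbone contains at least one complete block, giving a power-run of length $\geq T(m_i-1)+1$ inside $\gamma_i$. On the other hand, $\gamma_i$ is the $F_2(a,b)$-reduction of a concatenation of the fixed generators $g_1^{\pm 1}, g_2^{\pm 1}$ of $K_2$; since $g_1, g_2$ have power-runs bounded by $T(2)$ and cancellation at each junction in a reduced $K_2$-concatenation is bounded by an absolute constant (using the freeness of $g_1, g_2$ from the graded $c(5)$ property of Lemma \ref{malnlemma} together with the quasi-isometric embedding of $K_2$ in $F_2(a,b)$), the maximum power-run in $\gamma_i$ is bounded by a fixed constant $M$ independent of $i$. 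For $\omega$-large $i$ the inequality $T(m_i-1)+1 > M$ yields the desired contradiction.

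The hard part is the bounded-run claim for $\gamma_i$: verifying that the $F_2(a,b)$-reduction of an arbitrary reduced $K_2$-concatenation has power-runs of length bounded by a fixed constant, no matter how long the concatenation. This reduces to a uniform bound on the cancellation length at each $g_1$-$g_2$ junction, which is where the graded $c(5)$ small-cancellation structure (providing freeness of $g_1, g_2$ via Lemma \ref{malnlemma}) and the quasi-convexity of $K_2$ in $F_2(a,b)$ (precluding long cancellations) do the essential work. A secondary technical issue is cleanly justifying the case reductions in the previous paragraph, in particular the symmetry for the $F_2(x,y)$-half (using the $K_2 \subset K$ identification between the two factors) and the truncation step in the cross-half case.
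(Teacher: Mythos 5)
Your proposal follows the same skeleton as the paper's proof: realize $C$ as an $\omega$-limit of translates of relators $R_{m_i}$ via Theorem \ref{circletree}, extract at scale $d_i$ a configuration in which a subarc of $R_{m_i}$ of length $\asymp d_i$ and a $K_2$-geodesic of length $\asymp d_i$ have endpoints $o_\omega(d_i)$-close, and then contradict the word structure of the relators. Where you diverge is the endgame. The paper trims the quadrilateral $A_nB_nC_nD_n$, invokes thinness in the $\delta_{n-1}$-hyperbolic group $G_{n-1}$, and runs the $\epsilon$-piece-to-genuine-piece reduction from the proof of $RSC_3$ so as to contradict Property 4 (maximal pieces of $R_n$ are $o(\rho_n)$). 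You instead compare maximal power-runs: the common backbone must contain a full block $a^kb^k$ of $V(m_i)$ with $k$ on the order of $T(m_i-1)$, whereas reduced $a,b$-words representing elements of $K_2$ have uniformly bounded runs. These are two faces of the same mechanism (both rest on condition (A) and the escalating exponents in $U(N(n))$), and your version is arguably more self-contained: the ``piece'' the paper produces is a common subword of $R_n$ with a $K_2$-word rather than with another relator, so Property 4 does not literally apply and one has to unwind it into exactly the power-run estimate you give. Also, the ``hard part'' you flag is not hard: boundedness of power-runs over all of $K_2$ is the standard core-graph fact for finitely generated subgroups of free groups (a run $a^m$ traces an $a$-labelled path in the finite core graph of $K_2$, which cannot close up into a cycle since no proper power of $a$ lies in $K_n$), and it is already presupposed by condition (A) of the construction; no junction-by-junction cancellation analysis is needed.

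The one step that genuinely needs more care is the reduction ``I argue inside the tree $F_2(a,b)$.'' The relevant subarc of the (translated) relator loop lies in a coset $g'F_2(a,b)$, not in $F_2(a,b)$ itself, so you cannot immediately take a common backbone with $[p_i',q_i']\subset F_2(a,b)$ inside a single tree: a priori the two geodesics are only $o_\omega(d_i)$-fellow-travellers in $G_{m_i-1}$, which does not by itself yield a common subword. This is precisely what the paper's trimming-plus-normal-form step (via $\delta_{m_i-1}=O(\rho_{m_i-1})=o_\omega(d_i)$ and Lemma \ref{normal}) is for: one either identifies the two cosets up to an $o_\omega(d_i)$ adjustment, after which your tree argument applies, or else forces the long relator subarc to lie $o_\omega(d_i)$-close to a coset of $K_{m_i-1}$, which is excluded by the same power-run comparison one level down. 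With that step supplied, your argument is correct.
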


\begin{proof} Suppose not. Since $\GG$ is a circle-tree by Theorem \ref{circletree}, it follows that
there exists a bigon $\sigma$ consisting of the union of two arcs:
\begin{enumerate}
\item  a geodesic  $\overline{p q}$ in $\TT$.
\item an embedded path $\gamma$ in $\GG$ joining $p, q$ with interior disjoint from $\overline{p q}$ .
\end{enumerate}

Since all elements of $\CC$ can be obtained as $\omega-$limits of translates  of elements of $\RR$
by Theorem \ref{circletree}, we can assume that there exists a sequence of translates  $\{ g_{i_n}R_{i_n} \}$ whose $\omega-$limit
is $\sigma$. For the purposes of this proof, reindex by setting $g_{i_n}R_{i_n}=R_n$. Geodesic segments in the Cayley graph of $G$ will be denoted
by $[E,F]$.
Hence there exists a sequence $\{R_n,  A_n, B_n, C_n, D_n \}$  such  that (see Figure 1)

\begin{enumerate}
\item $R_n \in \RR$
\item The scaling sequence $d_n = O(dia(R_n)) = O(\rho_n) = O(\delta_n)$, where $\delta_n$ is the hyperbolicity constant of $G(n)$
(The last equality follows from Lemma \ref{delta} and the asymptotic visibility of the sequence $\{ R_n\})$. 
\item  $[A_n, B_n] \subset K_2$; 
\item $[D_n, C_n] \subset R_n$
\item $[A_n, D_n], [B_n, C_n]$ are geodesics in normal form in $G$.
\item $|[A_n, D_n]|, |[B_n, C_n]|$ are $o_\omega(d_n)$
\item $|[A_n, B_n]|, |[D_n, C_n]|$ are $O_\omega(d_n)$
\item $lim_\omega (A_n) = lim_\omega (D_n) =  p$
\item $lim_\omega (B_n) = lim_\omega (C_n) =  q$
\end{enumerate}

\begin{figure}\label{quadl}
\begin{tikzpicture}[thick]
    \path[draw] (-4,0)  coordinate 
            
            -- ( 4,0)  coordinate [label=right: $K_2$] (K_2) ;
    
    \node[draw,shape=ellipse, minimum height=1.6cm, minimum width=5cm] (Ell) at (0,1.25) {};
    \draw (-2,0.75) coordinate[label=left: $\scriptstyle D_n$] (D_n)
               -- (-2,0) coordinate[label=below: $\scriptstyle A_n$] (A_n);
    \draw (2,0.75) coordinate[label=right: $\scriptstyle C_n$] (C_n)
               -- (2,0) coordinate[label=below: $\scriptstyle B_n$] (B_n);

    \draw (0,1.25) node {$R_n$};
    \foreach \point in {A_n,B_n,C_n,D_n}
           \fill [black] (\point) circle (1.5pt);       
    
\end{tikzpicture}
 \caption{}
\end{figure}

By cutting off initial and final pieces from  $[A_n, B_n], [D_n, C_n]$ of length $o(d_n)$ if necessary and using thinness of quadrilaterals we can assume
without loss of generality that $|[A_n, D_n]|, |[B_n, C_n]|$ are at most $O(d_{n-1})=O(\delta_{n-1})$. More precisely, choose a subsegment 
$[A_n^\prime, B_n^\prime]$ such that $d(A_n, A_n^\prime) = d(B_n, B_n^\prime) = 2 max \{|[A_n, D_n]|, |[B_n, C_n]|\}$. Then there exist points 
$D_n^\prime, C_n^\prime$ on $[D_n, C_n]$ such that $d(D_n^\prime, A_n^\prime) \leq 2 \delta_{n-1}$ and $d(C_n^\prime, B_n^\prime) \leq 2 \delta_{n-1}$.
Rename the four vertices $A_n^\prime, B_n^\prime, C_n^\prime, D_n^\prime$ as $A_n, B_n, C_n, D_n$.
Then as in the proof of Condition $RSC_3$ in the proof of Theorem \ref{g-gsc} we get a genuine (in the usual small cancellation
sense) piece of length $O(|[A_n, B_n]|)$ 
as a subword of $[A_n, B_n]$. Further, any maximal piece of $R_n$ is $o(\rho_n)$ and hence 
  $\frac{|[A_n, B_n]|}{\rho_n} \rightarrow 0$ as $n \rightarrow \infty$. On the other hand, since 
$lim_\omega (A_n) = lim_\omega (D_n) =  p$
and $lim_\omega (B_n) = lim_\omega (C_n) =  q$, and since $p \neq q$, it follows that $\frac{|[A_n, B_n]|}{\rho_n} $ is bounded away from zero.
This is a contradiction.
\end{proof}

\section{Controlled Floyd Separation}\label{boundary}
The main purpose of this section is to show:

\begin{theorem} For any Floyd function $f$, $c \geq 1$, and $p, q \in \partial K_2 \subset Pre(\partial)  G$, the points $p, q$ are Floyd separated with respect to 
$c-$quasigeodesics.  \label{nontrivial} \end{theorem}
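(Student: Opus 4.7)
The plan is a proof by contradiction via the asymptotic cone machinery. Suppose $p\neq q$ in $\partial K_2$ admit Cauchy sequences $p_n\in[1,p)$, $q_n\in[1,q)$ together with $c$-quasigeodesics $\sigma_n$ from $p_n$ to $q_n$ in $\Gamma(G)$ satisfying $l_f^c(\sigma_n)\to 0$. Since any edge of $\sigma_n$ at distance $\le R$ from $1$ contributes at least $f(R)$ to $l_f$, the hypothesis immediately forces $r_n:=\min_{x\in\sigma_n}d(x,1)\to\infty$; the idea is to play this ``escape to infinity'' against the tree-graded geometry of the asymptotic cone.

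First I form $\GG:=\mathrm{Con}^\omega(G,d_n)$ with scaling $d_n:=|p_n|+|q_n|$. By Proposition \ref{cctree} this is a circle-tree, and by Proposition \ref{blemb} the asymptotic cone $\TT$ of $K_2$ sits inside $\GG$ as a bi-Lipschitz embedded $\R$-tree. Because $K_2\hookrightarrow G$ is a quasi-isometric embedding and $p\neq q$ in $\partial K_2$, one has $d_G(p_n,q_n)=\Theta(d_n)$, so $(p_n),(q_n)$ define distinct $\hat p,\hat q\in\TT$ and the $\TT$-arc between them passes through $\hat 1$. The key consequence of Theorem \ref{trivialintn} is that $\hat 1$ is a \emph{topological cut point} of $\GG$ separating $\hat p$ from $\hat q$: any circle piece $C$ with $\hat 1\in C$ satisfies $C\cap\TT=\{\hat 1\}$, so $C\setminus\{\hat 1\}$ lies entirely on one side of the natural decomposition of $\GG$ at $\hat 1$, and the tree-graded structure places $\hat p$ and $\hat q$ in distinct components of $\GG\setminus\{\hat 1\}$. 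Since $|\sigma_n|\le c\,d(p_n,q_n)+c=O(d_n)$, the $\omega$-limit $\hat\sigma$ of $\sigma_n/d_n$ is a $c$-Lipschitz path from $\hat p$ to $\hat q$, and the cut-point property forces $\hat\sigma$ through $\hat 1$. Equivalently there exist $x_n\in\sigma_n$ with $d(x_n,1)/d_n\to 0$, so $r_n=o(d_n)$.

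This first cone only yields $r_n=o(d_n)$, which is consistent with $r_n\to\infty$, so the next step zooms in at the finer scale $r_n$. Let $o_n\in\sigma_n$ realize $d(o_n,1)=r_n$, and let $\tau_n$ be the subpath of $\sigma_n$ between its first entry $E_n$ and last exit $F_n$ of the ball $B(3r_n)$; then $o_n\in\tau_n$, $\tau_n$ is a $c$-quasigeodesic, and $|\tau_n|\le 6cr_n+c$. Let $\beta_n$ be the $K_2$-geodesic from $p_n$ to $q_n$ and $\beta_n'$ its portion in $B(3r_n)$. In the finer cone $\GG':=\mathrm{Con}^\omega(G,r_n)$---again a circle-tree containing the $K_2$-cone $\TT'$ as a bi-Lipschitz embedded $\R$-tree---the limit $\hat\tau$ of $\tau_n/r_n$ is a $c$-Lipschitz path with endpoints on the sphere of radius $3$ around $\hat 1$ and passing at distance exactly $1$ from $\hat 1$; in particular $\hat\tau$ \emph{avoids} $\hat 1$. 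The limit $\hat\beta$ of $\beta_n'/r_n$ is an arc in $\TT'$ through $\hat 1$ with endpoints on the same sphere on opposite $\TT'$-branches. Matching the endpoints of $\hat\tau$ with those of $\hat\beta$ in $\GG'$ and applying the cut-point argument a second time inside $\GG'$ forces $\hat\tau$ through $\hat 1$, contradicting its positive minimum distance $1$ from $\hat 1$.

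\textbf{The main obstacle} is precisely this endpoint matching in the finer cone. At the coarse scale $d_n$ the paths $\sigma_n$ and $\beta_n$ trace the same $\TT$-arc (both pass through $\hat 1$ by the cut-point argument in $\GG$), but at the finer scale $r_n$ their entry/exit points $E_n,F_n$ and their $K_2$-geodesic counterparts need not a priori coincide in $\GG'$. Bounding their $r_n$-scale drift requires the graded small cancellation structure of $G$ (Theorem \ref{g-gsc}): any genuine drift would produce, in some intermediate-scale cone, a limit circle meeting $\TT'$ (or $\TT$) in more than one point---the ``$D$-shaped'' detour that Theorem \ref{trivialintn} explicitly forbids.
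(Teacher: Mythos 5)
Your overall strategy---contradiction via asymptotic cones and Theorem \ref{trivialintn}---is the same as the paper's, and your first cone (at scale $d_n$) is sound: the cut-point argument correctly shows that the limit of the quasigeodesics must contain the whole $\TT$-arc from $\hat p$ to $\hat q$, whence $r_n=o_\omega(d_n)$. But there is a genuine gap exactly where you flag one, and the flag does not repair it. In the finer cone at scale $r_n$ the configuration does not close up: $\hat\tau$ joins $\hat E$ to $\hat F$, while $\hat\beta$ joins two different points of $\TT'$, and nothing forces $\hat E$ and $\hat F$ to lie on opposite sides of $\hat 1$. The quasigeodesic could enter and leave $B(3r_n)$ on the same branch of $\beta_n$, in which case the cut-point argument in $\GG'$ yields nothing and the ``crossing'' from the $p$-ray to the $q$-ray happens at some other, unidentified scale. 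Your proposed remedy---that any drift would produce a forbidden circle in ``some intermediate-scale cone''---is not an argument: you have neither identified the scale nor exhibited the loop, and a drifting quasigeodesic could a priori travel through the tree part of the circle-tree, or through circles each meeting $\TT$ in a single point, without ever creating the forbidden configuration.

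The paper closes precisely this gap by a different choice of scaling sequence: it rescales by $M_n$, the \emph{maximal} distance from $\gamma_n$ to the geodesic $[p_n,q_n]$, rather than by the minimal distance $r_n$ to the identity. Maximality guarantees that the perpendiculars dropped from the trimmed quasigeodesic to $[p_n,q_n]$ have length at most $M_n$, hence at most $1$ in the cone, so the limit is a genuinely closed quadrilateral: a top arc deviating from the bottom by exactly $1$ at the limit of $t_n$, two vertical sides of length at most $1$, and a bottom arc along $\TT$ of definite length (at least $6$ after trimming at arc-length $5cM_n$ from $t_n$). This closed loop is what forces a circle of the circle-tree to meet $\TT$ in an arc of positive length, contradicting Theorem \ref{trivialintn}; a separate bigon case handles quasigeodesics of total length $O(M_n)$. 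To salvage your two-scale scheme you would have to prove the endpoint-matching statement at scale $r_n$, and there is no evident reason it holds; the natural fix is to abandon $r_n$ and rescale by the maximal deviation as the paper does.
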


\begin{proof}
Let $p,q \in \partial K_2$ be distinct points. Let $\{p_n\}_{n=1}^{\infty}$ and 
$\{q_n\}_{n=1}^{\infty}$ be Cauchy sequences in $K_2$, converging to $p$ and $q$ respectively.  We shall show that $d_{f,c}(p, q) > 0$.

Suppose not. Then there exists  a Floyd function $f$ and a constant $c \geq 1$ such that  $d_{f,c}(p, q) = 0$.  Hence there exists 
a sequence of  $(c,c)$-quasi geodesic paths  $\{\gamma_n\}$,
joining 
$q_n$ to $p_n$ in $G$, such that (the $c-$controlled Floyd lengths) $l^{c}_{f}(\gamma_n) \rightarrow 0$.

Let  $D_n$ be the Dehn-diagram obtained from the trivial 
word with label $[p_n,q_n].\gamma_n$ in $G$. We will 
construct a sequence $\{D_n^{\prime}\}$ of subdiagrams  from $\{D_n\}$ and a sequence of numbers $M_n$
  such that 

\begin{enumerate}
\item  $M_n$ is the maximum possible distance of a point $t_n$ on $\gamma_n$, 
from $[p_n,q_n]$.
\item The ultralimit
$\lim^{\omega}(\partial D_n^{\prime}, \{M_n\})$ contains a circle $C$ with non-zero but finite length in the asymptotic cone 
$Con^{\omega}(G,\{M_n\})$.
\item The intersection  $C \cap \TT$ has nonzero length.
\end{enumerate}
 This will contradict Theorem \ref{trivialintn}.

\begin{figure}[!ht]
\label{Dehn diagram2}
\begin{tikzpicture}[thick]
    \path[draw] (-4,0)  coordinate [label=below:$p_n$] (p_n)
            
            -- ( 4,0)  coordinate [label=below:$q_n$] (q_n) ;
    
    \path[draw] (-4,0)..controls (-1,5.5) and (1,5.5)..(4,0);
    
    \draw (-2.5,2.37)  coordinate [label=left:$p_n^{\prime}$] (p_n')
          -- (-2.5,0)  coordinate [label=below:$p_n^{\prime \prime}$] (p_n'');
          
    \draw (2.5,2.37)  coordinate [label=right:$q_n^{\prime}$] (q_n')
          -- (2.5,0)  coordinate [label=below:$q_n^{\prime \prime}$] (q_n'');

    \draw (0,4.12)  coordinate [label=above:$t_n$] (t_n)
         -- (0,0)  coordinate  (s_n);    
  
    
     \draw (0,-0.3) node {$s_n$};              
    \draw (1.0,-0.3) node {$[{p_nq_n}]$};    
    \draw (3.7,1.0) node {$\gamma_n$};
    \draw (0.3,2) node {$\scriptstyle M_n$};
    \draw (-2.2,1.3) node {$\scriptstyle \leq M_n$};
    \draw (2.2,1.3) node {$\scriptstyle M_n \geq$};
    \draw (-0.3,3) node {$\scriptstyle D_n^{\prime}$};
    \draw (2.2,3.5) node {$D_n$};

    \foreach \point in {p_n',p_n''}
           \fill [black] (\point) circle (1.5pt);
    \foreach \point in {p_n,q_n}
           \fill [black] (\point) circle (2pt);
    \foreach \point in {t_n,s_n}
           \fill [black] (\point) circle (1.5pt);
    
    \foreach \point in {q_n',q_n''}
           \fill [black] (\point) circle (1.5pt);
    

\end{tikzpicture}
\caption{$D_n$ and $D_n^{\prime}$}
\end{figure}

Choose  $t_n \in \gamma_n$ such that $d(t_n,[p_n,q_n])=M_n$ is maximal and let $s_n \in [p_n,q_n]$
be such that $d(t_n,s_n)=M_n$. Since $l^{c}_{f}(\gamma_n) \rightarrow 0$ as $n \rightarrow \infty$, it follows that
$M_n \rightarrow 0$ as $n \rightarrow \infty$. The actual dependence of $M_n$ on $f, c$ is not important as we shall only use the sequence $\{ M_n \}_n$
as a sequence of scale factors to extract a limiting asymptotic cone.

The proof now splits into the following cases:

\begin{enumerate}
\item[Case 1:] The  subarcs of $\gamma_n$ joining 
 $p_n,t_n$ and  $t_n,q_n$ each have  length at least $5cM_n$.
\item[Case 2:] The  subarcs of $\gamma_n$ joining 
 $p_n,t_n$ and  $t_n,q_n$ each have  length less than $5cM_n$.
\item[Case 1:] Exactly one of the  subarcs of $\gamma_n$ joining 
 $p_n,t_n$ and  $t_n,q_n$  has  length at least $5cM_n$.
\end{enumerate}

We shall prove the Theorem for Cases 1 and 2. We shall only give a sketch of Case 3, which 
 is a hybrid and the  proofs of Cases 1 and 2 combine in an obvious way.

\smallskip

\noindent {\bf Case 1:}\\
Choose  $p_n^{\prime}, q_n^{\prime}$ to the left and right
respectively of $t_n$ such that the subarcs of $\gamma_n$ joining 
 $p_n^{\prime},t_n$ and  $t_n,q_n^{\prime}$ each have  length $5cM_n$. The points $p_n^{\prime}, q_n^{\prime}$ exist by rectifiability and connectedness
of the arc $\gamma_n$. It follows that the 
 $c$-quasi geodesic arc ($c>1$) joining $p_n^{\prime}$ and $q_n^{\prime}$ has length $10cM_n$ and hence
 $d(p_n^{\prime},q_n^{\prime}) \geq 10M_n-c$.

Let $p_n^{\prime\prime}$ (resp.
 $q_n^{\prime\prime}$) be points on $[p_n,q_n]$  closet to $p_n^{\prime}$ (resp. $q_n^{\prime}$).
 By the choice of $t_n$,  $d(p_n^{\prime},p_n^{\prime\prime}) \leq M_n$ and 
 $d(q_n^{\prime},q_n^{\prime\prime})\leq M_n$. Hence 
 $|[p_n^{\prime\prime},q_n^{\prime\prime}]| \geq 8M_n-c$.
 
Let $D_n^{\prime}$ be the subdiagram of $D_n$ with sides 
 $[{p_n^{\prime},p_n^{\prime\prime}}]$, $[{p_n^{\prime\prime},q_n^{\prime\prime}}]$, 
 $[{q_n^{\prime\prime},q_n^{\prime}}]$ and the subarc of $\gamma_n$ joining $q_n^{\prime}$ and 
 $p_n^{\prime}$. 
 
 Let $D^\prime = \lim^{\omega} \partial D_n^{\prime} \subset Con^{\omega}(G,\{M_n\}); p^\prime = \lim^{\omega}p_n^{\prime};  
q^\prime = \lim^{\omega}q_n^{\prime}; p^{\prime\prime} = \lim^{\omega}p_n^{\prime\prime};
q^{\prime\prime} = \lim^{\omega}q_n^{\prime\prime}; t = \lim^{\omega}t_n$. Then 
 $|[{p^{\prime\prime},q^{\prime\prime}}]| \geq 8$, $|[p^{\prime}, {p^{\prime\prime}}]| \leq 1$,
$|[{q^{\prime},q^{\prime\prime}}]| \leq 1$.  Also the top arc $\eta$ of $D^{\prime}$, joining $p^\prime, t, q^\prime$
 is an {\bf embedded} ($c-$biLipschitz) arc of length $10c$ in $Con^{\omega}(G,\{M_n\})$. Hence
$D^{\prime}$ contains a loop $\theta$ such that $10c-2 \leq |\theta \cap \eta| \leq 10c$ and $6 \leq |[p^{\prime},q^{\prime}]|$. 
 Since $Con^{\omega}(G,\{M_n\})$ is a circle tree, and since $\eta, [p^{\prime\prime},q^{\prime\prime}]$ are both embedded arcs,
it follows that $\theta$ contains a simple loop $C$ (a circle) whose intersection with  $ [p^{\prime\prime},q^{\prime\prime}]$ has non-zero length.
 This  contradicts Theorem \ref{trivialintn}.\\

\noindent {\bf Case 2:}\\ In this case, since $M_n \leq |\gamma_n| + d (p,q)$ (where $|\gamma_n|$ denotes the usual length of $\gamma_n$), it follows
that both $|\gamma_n|$ and $d (p,q)$ are $O(M_n)$. Next, since $\gamma_n$ is a $(c,c)-$quasigeodesic, $lim^\omega (\gamma_n)$ is a $c-$biLipschitz path.
In particular, $lim^\omega (\gamma_n)$ has no self-intersections. Hence, $D = \lim^{\omega} \partial D_n \subset Con^{\omega}(G,\{M_n\})$ is a biLipschitz bigon
with $p := \lim^{\omega}p_n$ and  
$q := \lim^{\omega}q_n$ as its two vertices. Since $t := \lim^{\omega}t_n$ lies at distance $1$ from $[p,q]$, it follows that there is a nontrivial bigon $B$
with bounding arcs $\alpha, \beta$ such that 
\begin{enumerate}
\item the interiors of $\alpha, \beta$ do not intersect,
\item $\alpha$ contains $t$ and is a nontrivial subarc of $\lim^{\omega}{\gamma}_n$,
\item $\beta$  is a nontrivial subarc of $[p,q]$.
\end{enumerate}
Again, this  contradicts Theorem \ref{trivialintn}.\\

\noindent {\bf Case 3:}\\ Without loss of generality, suppose that  the  subarc of $\gamma_n$ joining 
 $p_n,t_n$  has  length at least $5cM_n$ and the subarc of $\gamma_n$ joining 
 $q_n,t_n$  has  length less than $5cM_n$. Construct $p_n^\prime, p_n^{\prime \prime}$ as in Case 1 and let $D_n^\prime$ be the subdiagram of $D_n$
bounded by $[p_n^\prime, p_n^{\prime \prime}]$, $[ p_n^{\prime \prime}, q_n]$ and the subarc of $\gamma_n$ joining $q_n$ to $p_n^{\prime \prime}$.
Scaling by $M_n$ and taking limits we again obtain a nontrivial bigon  contradicting Theorem \ref{trivialintn}.
\end{proof}
 
We isolate now  a notion of `quasiconvexity with respect to quasigeodesics'  (slightly stronger than usual quasiconvexity) that
the {\it proof } of the above Theorem
furnishes for $K_2$:

\begin{defn} \label{qcqg} Let $G$ be a finitely generated group and $\Gamma$ a Cayley graph with respect to a finite generating set. 
For any $c \geq 1$, a  finitely generated subgroup $H \subset G$
is said to be {\bf quasiconvex with respect to $c-$quasigeodesics} if  there exists $D >0$ such that for any $h_1, h_2 \in H$, any $(c,c)-$quasigeodesic
in $\Gamma$ lies in a $D-$neighborhood of $H$. 

$H \subset G$
is said to be {\bf quasiconvex with respect to quasigeodesics}  if it is 
 quasiconvex with respect to $c-$quasigeodesics for all $c$. \end{defn}

Thus  a subgroup is quasiconvex with respect to quasigeodesics if all geodesics on it are {\it `uniformly Morse'}.

\begin{cor} $K_2 \subset G$ is quasiconvex with respect to quasigeodesics. \label{morse} \end{cor}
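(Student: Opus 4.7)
The plan is to rerun the argument of Theorem \ref{nontrivial} essentially verbatim, but with finite sequences $h_{1,n},h_{2,n} \in K_2$ in place of the Cauchy sequences $\{p_n\},\{q_n\}$ converging to ideal points of $\partial K_2$. Fix $c\geq 1$ and suppose for contradiction that no constant $D$ works. Because $K_2$ embeds quasi-isometrically in $G$ (by Lemma \ref{ret} together with quasiconvexity of $K_2$ in $F_2$), any geodesic in $\Gamma$ joining two points of $K_2$ stays within a uniform neighborhood of $K_2$, so the failure of quasiconvexity must occur along the quasigeodesic itself. This produces sequences $h_{1,n},h_{2,n}\in K_2$ and $(c,c)$-quasigeodesics $\gamma_n$ from $h_{1,n}$ to $h_{2,n}$ in $\Gamma$ such that
$$M_n := \max_{t\in\gamma_n} d(t,[h_{1,n},h_{2,n}]) \longrightarrow \infty.$$
Pick $t_n\in\gamma_n$ achieving this maximum and $s_n\in [h_{1,n},h_{2,n}]$ a nearest-point projection of $t_n$.

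Next I would form the asymptotic cone $Con^\omega(G,\{M_n\})$, basepointed at $s_n$. The $\omega$-limit of $[h_{1,n},h_{2,n}]$ lies in the $\mathbb{R}$-tree $\TT = Con^\omega(K_2,\{M_n\})$, which embeds bi-Lipschitzly by Proposition \ref{blemb}. The $(c,c)$-quasigeodesic property passes to the limit as a $c$-bi-Lipschitz (hence embedded) path, and $\lim^\omega t_n$ sits at distance exactly $1$ from $\TT$.

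Then I split into the same three cases as in the proof of Theorem \ref{nontrivial}, according to whether the two subarcs of $\gamma_n$ between $t_n$ and its endpoints have length at least $5cM_n$. In each case the identical construction of the sub-diagram $D_n^\prime$ (truncating at auxiliary points placed at distance $5cM_n$ from $t_n$ in Cases 1 and 3, or using the whole diagram in Case 2) yields, upon taking $\omega$-limits, a non-degenerate bigon in the circle-tree $Con^\omega(G,\{M_n\})$ (Proposition \ref{cctree}) one of whose sides is a non-trivial geodesic segment lying in $\TT$ and the other is an embedded $c$-bi-Lipschitz arc passing through a point at distance $1$ from $\TT$. Exactly as in the original proof, such a bigon must contain a circle meeting $\TT$ in an arc of positive length, contradicting Theorem \ref{trivialintn}.

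The main technical point to verify is that the limiting bigon is genuinely non-degenerate, i.e.\ that its two vertices do not collapse in the asymptotic cone. In Cases~1 and 3 this is automatic, since the auxiliary cut-points are placed at distance exactly $5cM_n$ from $t_n$. In Case~2, one combines the $(c,c)$-quasigeodesic inequality with the elementary bound $|\gamma_n|\geq 2M_n$ (each subarc from $t_n$ to an endpoint of $\gamma_n$ has length at least $d(t_n,h_{i,n})\geq M_n$) to conclude that $d(h_{1,n},h_{2,n})/M_n$ is bounded away from $0$, so the two vertices survive to distinct points of $Con^\omega(G,\{M_n\})$. Everything else is a line-by-line repetition of the proof of Theorem \ref{nontrivial}.
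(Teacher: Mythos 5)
Your proposal is, in substance, the paper's own proof: the authors likewise observe that the final contradiction in Theorem \ref{nontrivial} requires only a sequence of $(c,c)$-quasigeodesics between points of $K_2$ whose maximal excursion $M_n$ tends to infinity, and they rerun the argument with $\{M_n\}$ as the scaling sequence for the asymptotic cone to produce a circle meeting $\TT$ in a set of positive length, contradicting Theorem \ref{trivialintn}. One justification in your reduction is not valid as stated, though: you infer that geodesics of $\Gamma$ joining two points of $K_2$ stay in a uniform neighborhood of $K_2$ ``because $K_2$ is quasi-isometrically embedded.'' In a non-hyperbolic ambient group a quasi-isometrically embedded subgroup need not be quasiconvex --- that is precisely the phenomenon this corollary rules out for $K_2$ --- so as written this step is circular. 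The repair is immediate and is what the paper does: measure the excursion of $\gamma_n$ from $K_2$ itself (equivalently, take the bottom side of the diagram $D_n$ to be the geodesic inside $K_2$, or inside $F_2(a,b)$, joining $h_{1,n}$ to $h_{2,n}$); since that path lies in a bounded neighborhood of $K_2$, one has $d(t,P)\geq d(t,K_2)-O(1)$ for $P$ the bottom side, so $M_n\to\infty$ is inherited without any appeal to quasiconvexity of $G$-geodesics. With that substitution the rest of your argument, including the non-degeneracy check for the limiting bigon, matches the paper's intended proof.
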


\begin{proof} We continue with the notation used in Theorem \ref{nontrivial}.
  Note first that in the proof of Theorem \ref{nontrivial}, the final contradiction only requires that the distance $d(t_n, (p,q)) \rightarrow \infty$
as $n\rightarrow \infty$ for some $c \geq 1$.

Suppose that there exists $c \geq 1$ such that
 $K_2 \subset G$ is not quasiconvex with respect to $c-$quasigeodesics. Then there exist $(c,c)-$quasigeodesics $\gamma_n$ joining $p_n, q_n \in K_2$
such that
$d(\gamma_n, (p,q)) \rightarrow \infty$
as $n\rightarrow \infty$ for some $c > 0$. Hence there exists $h_n \in \gamma_n$ such that 
$d(h_n, K_2) \rightarrow \infty$
as $n\rightarrow \infty$. Translating by an element of $K_2$, we can assume that $d(h_n, K_2)  = d(h_n, 1) $. The argument of 
Theorem \ref{nontrivial} now goes through as before to furnish a circle $C$ with non-zero but finite length in the asymptotic cone 
$Con^{\omega}(G,\{h_n\})$ such that
the intersection  $C \cap \TT$ has nonzero length.  This  contradicts Theorem \ref{trivialintn}.
\end{proof}

\section{Triviality of Floyd Boundary} For completeness, we show in this section that the usual Floyd boundary $\partial_{f} G$ consists of a single point
and is hence trivial:

\begin{theorem} $\partial_{f} G$ consists of a single point. \label{floydtrivial} \end{theorem}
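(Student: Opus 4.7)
By Remark~\ref{trivialrmk}, it suffices to produce, for any ideal points $p,q \in Pre(\partial)G$ with approximating Cauchy sequences $p_n \to p$, $q_n \to q$ along the associated geodesic rays $r_p, r_q$, paths $\sigma_n$ from $p_n$ to $q_n$ in the Cayley graph whose Floyd lengths tend to zero. The plan is to decompose each $\sigma_n$ as a concatenation of three pieces: an outward extension from $p_n$ along $r_p$ to a vertex $p_{M_n}$ with $M_n \gg n$; a detour $\tau_n$ from $p_{M_n}$ to $q_{M_n}$ of combinatorial length $O(M_n)$ that stays at distance at least $cM_n$ from $1$ for some fixed $c>0$; and a symmetric inward segment from $q_{M_n}$ back to $q_n$. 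The Floyd lengths of the two extension pieces are each bounded by $\sum_{k\geq n} f(k)$, which tends to $0$ since $\sum f < \infty$. The Floyd length of $\tau_n$ is at most $O(M_n) \cdot f(cM_n)$, which also tends to $0$: the summability of $f$ (non-increasing by definition of a scaling function) forces $kf(k) \to 0$ via the standard estimate $\sum_{k=\lceil n/2\rceil}^{n} f(k) \geq (n/2)f(n)$. Hence $l_f(\sigma_n) \to 0$, completing the proof modulo the construction of $\tau_n$.

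The construction of $\tau_n$ is the heart of the argument. The key mechanism is the family of trivial loops in $G$ supplied by the amalgamation: every $k \in K$ yields $k(a,b) = k(x,y)$ in $G$, hence a trivial word $k(a,b) \cdot k(x,y)^{-1}$. By Proposition~\ref{full}, $\Lambda K = \partial F_2(a,b)$, so for every ray in $F_2(a,b)$ and every depth $M$ there is $k \in K$ whose geodesic from $1$ in $F_2(a,b)$ shares a prefix of length at least $M$ with that ray; the symmetric statement holds in $F_2(x,y)$. Applied at a suitable translate near $p_{M_n}$, the corresponding loop performs a ``side switch'' between $F_2(a,b)$ and $F_2(x,y)$ while staying at distance $\Omega(M_n)$ from $1$. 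A symmetric bridge near $q_{M_n}$, combined with a short connection on the opposite factor between the two mirrored vertices (furnished by a further bridge or by a geodesic segment that provably remains at the required depth), closes up $\tau_n$. Geometrically, this corresponds to the phenomenon hinted at in the introduction's mnemonic: a limit circle in $Con^{\omega}G$ arising from a relator can touch the embedded $\mathbb{R}$--tree $\TT = Con^{\omega}K_2$ tangentially at a single point, providing a ``lateral'' shortcut in the asymptotic cone that bypasses the basepoint. A case analysis, depending on whether both rays eventually lie in a single factor or alternate between the two, dictates which combination of bridges to use.

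The principal technical obstacle is securing, uniformly over all choices of rays, the quantitative length bound $|\tau_n| = O(M_n)$: this amounts to showing that bridging elements $k \in K$ with prescribed Gromov-product overlap $M$ with a given direction can be chosen with $|k| = O(M)$. This requires extracting a quantitative refinement of Proposition~\ref{full} from the inductive construction of $K$ in Section~\ref{constrn}, especially from the minimality condition $(h_{n+1}, K_n)_1 = \min_{h\notin K_n}(h,K_n)_1$ and the controlled length $|h_{n+1}| = (h_{n+1},K_n)_1 + 1$ which ensures that new directions are approximated by $K$ at the shallowest possible depth at each stage. Once such a quantitative approximation statement is in hand, the case analysis and Floyd-length bookkeeping described above assemble into the required family $\sigma_n$.
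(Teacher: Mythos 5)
Your overall mechanism --- detouring from $p_n$ to $q_n$ through a region far from the identity, using the relations $k(a,b)=k(x,y)$ together with the full-limit-set property of $K$ to supply the detour --- is the same one the paper exploits, but the proposal has two genuine gaps. First, the quantitative bound you yourself flag as ``the principal technical obstacle'' is not merely unproved; it is false for this construction. The element $g_{n+1}=h_{n+1}W_{n+1}$ that realizes overlap $M=(h_{n+1},K_n)_1$ with a new direction has $|W_{n+1}|$ of tower-function size (Property~3 in Section~\ref{cycgeod} gives $|R_{n+1}|=O(T(n+1)^2)$), so bridging elements of $K$ with prescribed overlap $M$ have length enormously larger than $O(M)$, and no detour $\tau_n$ of combinatorial length $O(M_n)$ staying at depth $\Omega(M_n)$ exists. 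Consequently the bookkeeping $l_f(\tau_n)\le O(M_n)\cdot f(cM_n)$ cannot be run. The paper's estimate is structured differently: the relator $R_{m(n)}$ containing $p_n^{-1}q_n$ as a subword decomposes (Lemma~\ref{delta}) into two $(2,2)$-quasigeodesics from $1$ to an antipodal point $t_n$, with $p_n$ on one and $q_n$ on the other; the detour $p_n\to t_n\to q_n$ is then a union of two quasigeodesic \emph{tails} starting at depth $|p_n|$, whose Floyd length is bounded by a tail sum of the form $\sum_{k\gtrsim |p_n|}f(k)\to 0$ \emph{regardless of how long the detour is}. That is the estimate you need in place of (length)\,$\times$\,(maximal edge weight).

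Second, you try to treat arbitrary $p,q\in Pre(\partial)G$ directly, but Proposition~\ref{full} only controls rays inside the factors $F_2(a,b)$ and $F_2(x,y)$, and your ``case analysis'' over rays that alternate between factors is not carried out: a geodesic ray in $G$ need not eventually lie in a coset of a factor, and nothing in your argument produces the required deep approximation by $K$ for such a ray. The paper sidesteps this entirely. It first shows that $\partial_f F_2(a,b)$ and $\partial_f F_2(x,y)$ are each a single point, identifies the two points using the relators $w_n(x,y)w_n^{-1}(a,b)$ centered at $1$, and then invokes minimality of the $G$-action on $\partial_f G$ (Karlsson): a $G$-invariant point forces the boundary to be that single point. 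Without this minimality step, or a genuinely worked-out argument for arbitrary rays in $G$, your proof does not close even granting the corrected Floyd-length estimate above.
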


\begin{proof}
Let $\partial_f F_2(a,b) \subset \partial_{f} G$ denote the set of limit points of $F_2(a,b)$ in $ \partial_{f} G$.
  Let $p, q \in \partial_f F_2(a,b)$ and let $(p,q)$ denote the label of the bi-infinite geodesic path between them. 
Assume without loss of generality that $1 \in (p,q)$.
By the construction of the $h_n$'s in Section \ref{eg}, there exist $p_n \rightarrow p, q_n \rightarrow q$ such that 
\begin{enumerate}
\item $[p_n, q_n]$ is centered at $1$
\item the geodesic path $[p_n, q_n]$ (i.e. the word $p_n^{-1} q_n$)
in $a, b$ is a subpath of $[1,h_m]$ (cf. the construction in Section \ref{eg}) 
for some $m=m(n)$, surviving in the construction of $R_m$.
\end{enumerate}

We make  condition (2) more precise.
Choose $h_m$ and $c_n, d_n \in [1,h_m]$ such that 
\begin{enumerate}
\item $c_n^{-1}d_n = p_n^{-1} q_n$ 
\item Any geodesic $[1, k]$ with $k \in K$, containing $[1, c_n]$ as a subsegment  must have length greater than $2|c_n|$, i.e. $|k| > 2 |c_n|$.
Hence, the reduced normal form of $h_m^{-1}(x,y)h_m(a,b)$ contains $c_n^{-1}(a,b)d_n(a,b)$ (and hence also $c_n^{-1}(x,y)d_n(x,y)$) as a subword.
\end{enumerate}
 By ensuring the above, we are guaranteed the existence  of a relator
$R_m (=R_{m(n)}) \in \RR$ such that   $p_n^{-1}(a,b)q_n(a,b)$ (and hence also $p_n^{-1}(x,y)q_n(x,y)$) is a subword of $R_m$. 

By Lemma \ref{delta}, for $n$ sufficiently large, $R_{m(n)} $ can be described as the union of two $(2,2)$ quasigeodesics $[1,t_n]_1$ and 
$[1,t_n]_2$ both starting at $1$ and ending
at an `antipodal' point $t_n \in R_{m(n)} $. Further, $p_n \in [1,t_n]_1$ and $q_n \in [1,t_n]_2$ (See Figure 3).

\begin{figure}\label{contiguity diagram}
\begin{tikzpicture}
\draw[thick] (4,0) arc (-45:225:4);
\draw[thick] (-1.65,0) coordinate[label=left: $p_n$] (p_n) -- 
      (4,0) coordinate[label=right: $q_n$] (q_n);

\draw (1.175,0) coordinate[label=below: $\scriptstyle 1$] (id);

\draw (1.175,6.825) coordinate[label=above: $\scriptstyle t_n$] (t_n);

\foreach \point in {p_n,q_n}
           \fill [black] (\point) circle (1.5pt);
\foreach \point in {t_n,id}
           \fill [black] (\point) circle (1.2pt);
\draw (5.1,5.1) node {$R_{m(n)}$};           
\end{tikzpicture}

 \caption{$d_{f}(p_n,q_n)$ is small.}
\end{figure}
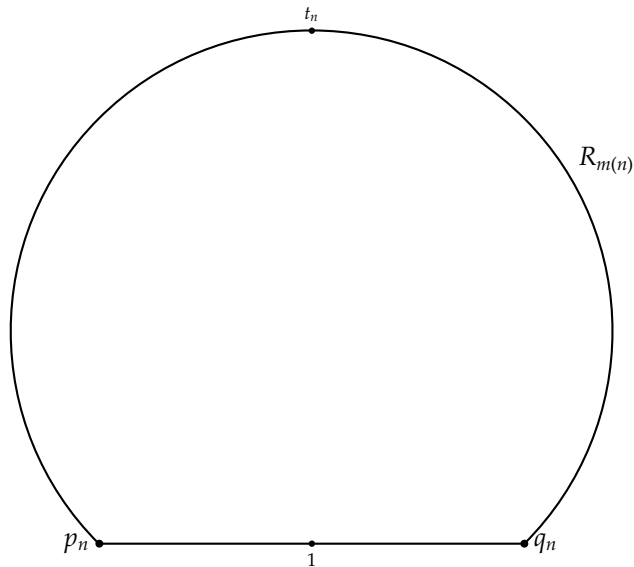

Assume that the Floyd length of the geodesic rays $[1,p)$ (and $[1,q)$) are normalized to one.
Hence for all $\epsilon > 0$, there exists $n$ sufficiently large such that the Floyd lengths  of the subpaths $[p_n,t_n]_1$ and 
$[q_n,t_n]_2$ are each less than $\frac{\epsilon}{2}$.  It follows that $d_f(p_n, q_n) < \epsilon$.

Since $\epsilon > 0$ is arbitrary, $p, q$ correspond to the same point on the Floyd boundary. Hence $\partial_f F_2(a,b)$ 
consists of a single point. Similarly, $\partial_f F_2(x,y)$ consists of a single point $z$. 

Next, from the  relators $R_n$ of the form $w_n(x,y)w_n^{-1}(a,b)$ centered at $1$ one can extract subsequential limits $z_1 \in \partial_f F_2(x,y)$ and $z_2
\in \partial_f F_2(a,b)$ of 
$w_n(x,y)$ and $w_n(a,b)$ to conclude that  $z_1 = z_2$ on $\partial_{f} G$. Hence $\partial_f F_2(x,y) = \partial_f F_2(x,y)=z$. Since the action of $G$ on
$\partial_{f} G$ is minimal \cite[Theorem 2 and Section VI]{karlsson-free}, and since $z$ is invariant under both $F_2(a,b)$ as well as $F_2(x,y)$ and hence under $G$, it follows that 
$\partial_{f} G = \{ z \}$.
\end{proof}

\noindent {\bf Concluding Remarks:}
As pointed out in Remark \ref{dah}, Floyd separation of points of $\partial K_2 \subset Pre (\partial) G$ with respect to quasigeodesics
would follow if $K_2$ was hyperbolically
embedded in $G$. However, this seems quite unlikely for the following reason. The proof of Theorem \ref{nrh} shows that if $G$ admits an action by isometries on a hyperbolic
space $X$ and both $F_2(a,b)$ and $F_2(x,y)$ are qi embedded then the limit sets of $F_2(a,b)$ and $F_2(x,y)$ must coincide. This would mean that we can take $X$
to be quasi-isometric to the Cayley graph of $F_2(a,b)$ (or $F_2(x,y)$). But then all elements of the form $w(x,y) w(a,b)^{-1}$ would lie in
the kernel of the action preventing such an action from being acylindrical in any sense. Perhaps this argument can be strengthened to give a negative answer to the
following:

\begin{qn} \label{hypembed} Is $K_2$ hyperbolically embedded in $G$? \end{qn}

We should point out however that $G$ {\it does} admit an acylindrical action on the Bass-Serre tree $T$ corresponding to the splitting along $K$.
Of course $K$ keeps an edge of $T$ fixed.

\medskip

\noindent {\bf Acknowledgments:} The  authors would like to thank Dani Wise for a number of useful discussions and email exchanges
and particularly for pointing out the reference \cite{wise-qpor} and telling us the proof idea of Proposition \ref{cycgeodprop}.
We are grateful to Victor Gerasimov for pointing out an error in a previous version (See Remark \ref{gerrmk}). We would also like to thank Leonid Potyagailo
and Francois
Dahmani for helpful comments and the referee for a careful reading and several helpful comments on the paper, particularly on Theorem \ref{nontrivial}.

\bibliography{floyd}
\bibliographystyle{alpha}

\end{document}